\documentclass[a4paper,oneside,11pt]{article}
\usepackage{amsmath}
\usepackage{amssymb}
\usepackage{indentfirst}
\usepackage{abstract}
\usepackage{titlesec}
\usepackage{graphicx}
\usepackage{fancyhdr}
\usepackage{latexsym}
\usepackage{mathrsfs}
\usepackage{booktabs}
\usepackage{mathrsfs}
\usepackage{mathtools}
\usepackage{amsthm}
\usepackage{wasysym}
\usepackage{cite}
\usepackage{color}
\usepackage{todonotes}
\usepackage{geometry}

\numberwithin{equation}{section}
\newtheorem{definition}{Definition}[section]
\newtheorem{theorem}{Theorem}[section]
\newtheorem{proposition}{Proposition}[section]
\newtheorem{lemma}{Lemma}[section]
\newtheorem{remark}{Remark}[section]
\newenvironment{solution}{\begin{proof}[\bf Proof of Proposition \ref{p1}]}{\end{proof}}
\newcommand{\curl}{{\rm curl} }

\renewcommand{\div}{ {\rm div }  }
\title{Strong solutions to the initial-boundary-value problem of compressible MHD equations with degenerate viscosities and far field vacuum  in 3D exterior domains
}
\author{Jiaxu Li$^{a}$, Boqiang L\"u$^{b}$, Bing Yuan$^{b}$, \thanks{email: jiaxvlee@gmail.com (J.X. Li), bingyuan@email.ncu.edu.cn (B. Yuan), lvbq86@163.com (B.Q.   L\"u) } \\
{\normalsize a. School of Mathematical Sciences,}\\
{\normalsize Shenzhen University, Shenzhen 518061, P. R. China;}\\
{\normalsize b. School of Mathematics and Computer Sciences}\\ {\normalsize \& Institute of Mathematics and Interdisciplinary Sciences,}\\ {\normalsize  Nanchang University, Nanchang 330031, P. R. China;}
}
\date{}

\begin{document}
\maketitle
\begin{abstract}
This paper concerns the initial-boundary-value problem (IBVP) of the compressible Magnetohydrodynamic (MHD) equations in 3D exterior domains with Navier-slip boundary conditions for the velocity and perfect conducting conditions for the magnetic field. For the case that the density approaches far-field vacuum initially and the viscosities are power functions of the density ($\rho^\delta$ with $0<\delta<1$), the local existence and uniqueness of strong solutions to the IBVP is established for regular large initial data. 
In particular, in contrast to the local theory of compressible Navier-Stokes equation Li-L\"u-Yuan\cite{lly26ns}, we show that the magnetic field  maintain  the initial quality of decaying faster rate than density throughout the time evolution, which   reveals the role of the magnetic field in handling singularities arising from density-dependent viscosities. 
\end{abstract}

\textbf{Keywords:} Compressible MHD equations; Degenerate viscosities; Far-field vacuum; Exterior domain; Navier-slip boundary conditions

\textbf{MSC 2020:} 76N06, 76N10, 35Q35

\section{Introduction}

The motion of a viscous compressible fluid under the effect of an electromagnetic field is governed by the compressible MHD equations:
\begin{equation}\label{1}
\left\{ \begin{array}{l}
  \rho_t+\mathrm{div}(\rho u)=0,\\
  (\rho u)_t+\mathrm{div}(\rho u\otimes u)+\nabla P=\mathrm{div}\mathbb{T}+H\cdot\nabla H-\nabla H\cdot H, \\
  H_{t}- \eta \Delta H=\mbox{curl}(u\times H),\quad\\
  \mbox{div}H=0,
  \end{array} \right.
\end{equation}
Here, $t\geq0$ is time, $x=(x_1,x_2,x_3)\in\Omega\subset\mathbb{R}^3$ is the spatial coordinate, and the unknown functions $\rho=\rho(x,t)$, $u=$ $(u^1,u^2,u^3)(x,t)$, $P(\rho)=a\rho^\gamma (a>0, \gamma>1)$, and $H=(H^1, H^2, H^3)(x,t)$ denote the density, velocity, pressure, and the magnetic field, respectively.
$\mathbb{T}$ denotes the viscous stress tensor in the form
\begin{equation}
\mathbb{T}=2\mu(\rho)\mathcal{D}(u)+\lambda(\rho)\mathrm{div}u\mathbb{I}_3,
\end{equation}
where $\mathcal{D}(u)=\frac12\big[\nabla u+(\nabla u)^T\big]$ is the deformation tensor, $\mathbb{I}_3$ is the $3\times 3$ identity matrix. The shear viscosity $\mu(\rho)$ and the bulk one $\lambda(\rho)+\frac{2}{3}\mu(\rho)$ satisfy the following hypothesis:
\begin{equation}\label{vis-c}
\mu(\rho)=\mu\rho^\delta,\ \lambda(\rho)=\lambda\rho^\delta,
\end{equation}
for some constant $\delta\geq 0$, $\mu$ and $\lambda$ are both constants satisfying
\begin{equation}\label{5}
\mu>0,\ 2\mu+3\lambda\geq0.
\end{equation}
The constant $\eta>0$ is the magnetic diffusivity describing a magnetic diffusion coefficient of the magnetic field.

Let $\Omega=\mathbb{R}^3-\bar{D}$ be the exterior of a simply connected bounded domain $D$ in $\mathbb{R}^3$, we look for a strong solution $(\rho,u,H)$ with finite energy to the IBVP of system \eqref{1} in exterior domain $\Omega$ with:
\begin{itemize}
  \item Initial data:
  \begin{equation}
(\rho,u, H)|_{t=0}=(\rho_0\geq0,u_0,H_0),\ x\in\Omega.
\end{equation}
  \item Navier-slip boundary conditions for the velocity $u$:
  \begin{equation}
    \label{ch1}
	u\cdot n=0,\quad \mbox{curl}u\times n=-A(x)u,\quad \mbox{on}\quad\partial\Omega,
\end{equation}
where $n=(n^1,n^2,n^{3})$ is the unit normal vector to the boundary $\partial\Omega$ pointing outside $\Omega$,
$A$ is a $3\times3$ symmetric matrix defined on $\partial\Omega$.
  \item Perfect conducting conditions for magnetic field $H$:
  \begin{equation}\label{chuy}
H\cdot n=0,\quad \mbox{curl}H\times n=0,\quad \mbox{on}\quad \partial\Omega.
\end{equation}
  \item Far field behavior:
\begin{align}\label{7}
(\rho,u,H)\rightarrow(0,0,0),\quad \mbox{as}\quad |x|\rightarrow\infty.
\end{align}
\end{itemize}

\begin{remark}
In general, Navier-type slip condition derived in \cite{Navier1827} can be stated as follows:
\begin{align}\label{ch2}
u\cdot n=0,\,\,\,\ \big(2\mathcal{D}(u) n+\vartheta u\big)_{tan}=0, \,\,\,\text{on} \,\,\,\partial\Omega,
\end{align}
where $\theta$ is a scalar friction function which measures the tendency of the fluid to slip on the boundary, and the symbol $v_{tan}$ represents the projection of tangent plane of the vector $v$ on $\partial\Omega$. In fact, the boundary condition \eqref{ch2} is equivalent to \eqref{ch1} in the sense of the distribution (see \cite{{Cai2023}} for details).
\end{remark}

The MHD system \eqref{1} models the dynamics of electrically conducting fluids, such as plasmas, and incorporates the effects of magnetic fields on fluid motion. If this motion occurs in the absence of a magnetic field, that is $H =0$, the MHD system \eqref{1} becomes the well-known Navier–Stokes equations for compressible fluids, which have been studied extensively in  \cite{Nash1962,Serrin1959,Tani19774,3,4,5,13,Straskraba1993,matsumura1980initial,Lions1998-2,Feireisl2001,jiang1,jiang2,HuangLiXin2012,Li201977,hhpz,Cai2023,Cai2112,9,Vasseur2016,bresch2004some,7,6,MR4340224,zhuxin2021,Lili2025,lly26ns,liu2025ns} and the references therein. For the constant viscous flows ($\delta=0$ in \eqref{vis-c}), the local well-posedness of smooth solutions is established in \cite{Nash1962,Serrin1959,Tani19774} (in the absence of vacuum)  and \cite{3,4,5,13,Straskraba1993} (with intial vacuum).
The global classical solutions were first obtained by Matsumura-Nishida \cite{matsumura1980initial} for initial data close to a non-vacuum equilibrium in $H^3$. In the presence of initial vacuum, the global existence of solutions becomes much more challenging due to the degeneration of momentum equation. The major breakthrough is due to Lions \cite{Lions1998-2}, where he obtained the global existence of finite-energy weak solutions when the adiabatic exponent $\gamma\ge9/5$ for the 3D case, which was further refined to the critical case $\gamma > 3/2$ by Feireisl \cite{Feireisl2001}. For spherically symmetric or axisymmetric initial data, Jiang-Zhang \cite{jiang1,jiang2} proved the global existence of weak solutions with vacuum for any $\gamma>1$. Under some suitable smallness assumptions on the initial data, the global smooth solutions with initial vacuum has been investigated in \cite{HuangLiXin2012,Li201977,hhpz,Cai2023,Cai2112} and the references therein. For  the Cauchy problem, 
Huang-Li-Xin \cite{HuangLiXin2012} and Li-Xin \cite{Li201977}  established the  3D and 2D global  classical solution  with  small total energy but possibly large oscillations, Hong-Hou-Peng-Zhu \cite{hhpz}
prove the 3D global  classical solutions under the assumptions that  $\gamma-1$ is small. Considering the IBVP with slip boundary conditions on velocity, 
Cai-Li \cite{Cai2023} and Cai-Li-L\"u \cite{Cai2112} obtained global classical solutions with small total energy in 3D bounded domain and exterior domain, respectively.
 
For density-dependent viscosities ($\delta>0$ in \eqref{vis-c}),  the multi-dimensional compressible Navier-Stokes system received extensive attentions in recent years, in which the well-posedness of solutions become more challenging due to the degenerate viscosity at vacuum. Recently, Li-Xin \cite{9} and Vasseur-Yu \cite{Vasseur2016} independently investigated the global weak solutions for compressible Navier-Stokes systems adhere to the Bresch-Desjardins relation \cite{bresch2004some}.  
When $\delta \ge 1$ in \eqref{vis-c}, Li-Pan-Zhu \cite{7,6} obtained the local existence of classical solution to the 2D/3D Cauchy problem with far field vacuum. Then, the 3D local classical solutions ($\delta > 1$) has been extended to be a global one by  Xin-Zhu \cite{MR4340224} for a class of smooth initial data that are of small density. In the case of $\delta\in(0,1)$  in \eqref{vis-c}, using different methods, Xin-Zhu \cite{zhuxin2021} and Li-Li \cite{Lili2025} established the local classical solution to the 3D  Cauchy problem with far field vacuum. Very recently, using different methods and strategies, Li-L\"u-Yuan\cite{lly26ns} and Liu-Zhong \cite{liu2025ns} obtained the local strong solution with far field vacuum to  the IBVP in  3D exterior domain with Navier-slip boundary conditions. It's should be mentioned here that, for the problem  \cite{zhuxin2021,Lili2025,lly26ns,liu2025ns} with $\delta\in (0,1)$, one has to handle the  the strong  singularity of $\nabla\rho^{\delta-1}$ and $\rho^{\delta-1}$   when the density $\rho\rightarrow 0$, which
is essentially different from the case $\delta\ge1$ considered in \cite{7,6,MR4340224}.


Then, back to the MHD system \eqref{1}, whose mathematical structure and physical mechanism are more complicated due to the fluid dynamic motion and the magnetic field interact strongly on each other. Here, we briefly review some results on the well-posedness of solutions with vacuum for the multi-dimensional compressible MHD equation, which are more closely related to the topic of this paper. When the viscosity coefficients $\mu$ and $\lambda$ are constant,  the local existence of strong solutions with initial vacuum is shown in \cite{Fan2009,Lv2015}. 
In the Lions' framework \cite{Lions1998-2}, Hu-Wang \cite{huwang2010} obtained the global existence of finite energy renormalized weak solutions with suitably large adiabatic exponent $\gamma$ in 3D bounded domains. Liu-Yu-Zhang \cite{lyz2013jde} established the global weak solutions with small total energy to the 3D Cauchy problem when the far-field density is away from vacuum. For the Cauchy problem with far filed vacuum, Li-Xu-Zhang \cite{lxz2013siam} and L\"u-Shi-Xu \cite{lsxiumj}  proved the global 3D and 2D classical solutions with initial data which are of small energy but possibly large oscillations respectively, Hong-Hou-Peng-Zhu \cite{hgyi} established the 3D global classical solutions under some smallness assumptions on $\gamma-1$ and $\eta^{-1}$.
Recently,  Chen-Huang-Peng-Shi \cite{Chen20241} and Chen-Huang-Shi \cite{Chen2024} established the global existence of classical solutions with small energy to the IBVP in 3D bounded domain and exterior one.

Compared with the constant viscosity coefficients, it is more difficult to investigate the well-posedness of solutions to the MHD system \eqref{1} with density-dependent viscosities and vacuum.  
When the shear viscosity coefficient $\mu$ is a positive constant and the bulk one $\lambda(\rho)=\rho^\beta~(\beta>4/3)$, Wang-Xu \cite{wangxuejde} established the global strong solutions to the 2D Cauchy problem with vacuum as the far field density. Very recently, for the density-dependent viscosities \eqref{vis-c} with $\delta=1$, Liu-Luo-Zhong \cite{liu2025} obtained the local  classical solution  with far field vacuum to the 3D Cauchy problem and IVBP in 3D exterior domains, which partially extends the results in Li-Pan-Zhu \cite{7} for compressible Navier-Stokes
equations to those of the compressible MHD equations.  However, for the case $\delta\in (0,1)$, 
the approaches used in \cite{liu2025}  do  not apply   due to  the strong  singularity   of $\rho^{\delta-1}$ and some new  difficulties from the magnetic field, and  there are few works about the system \eqref{1} with vacuum for the IBVP in 3D exterior domains even for local solution. 
Therefore, for the  MHD system \eqref{1}  with density-dependent viscosities $\delta\in (0,1)$, we aim to establish the local strong solution  with far field vacuum to the IBVP in 3D  exterior domains.

The strong solutions to the IBVP \eqref{1}-\eqref{7}  considered in this paper is defined   as follows:
\begin{definition}
Let $T>0$ be a finite constant. A solution $(\rho, u, H)$ to the system \eqref{1}-\eqref{7} is called a strong solution if all the derivatives involved in \eqref{1} for $(\rho, u, H)$ are regular distributions, and \eqref{1} holds almost everywhere in $[0, T]\times \Omega$.
\end{definition}

Before stating the main results, we first explain the notations and conventions used throughout this paper. For a positive integer $k$ and $p\geq1$, we denote the standard Lebesgue and Sobolev spaces as follows:
$$
\begin{gathered}
\|f\|_{L^p}=\|f\|_{L^p(\Omega)},\ \|f\|_{W^{k,p}}=\|f\|_{W^{k,p}(\Omega)},\ \|f\|_{H^k}=\|f\|_{W^{k,2}},\ \|f\|_{L^{p}}=\|f\|_{W^{0,p}},\\
D^{k,p}=\{f\in L^1_{loc}(\Omega):|f|_{D^{k,p}}=\|\nabla^kf\|_{L^p}<\infty\},\ D^k=D^{k,2},\\
D_0^{1}(\Omega)=\{f\in L^{6}(\Omega):\|\nabla f\|_{L^2}<\infty\},\ \|f\|_{X\cap Y}=\|f\|_X+\|f\|_Y.
\end{gathered}
$$
Let $B_R=\{x\in\mathbb{R}^3||x|<R\}$, we define  $$\Omega_R\triangleq \Omega\cap B_R$$
where  $R>2R_0+1$ with $R_0$ is  chosen to be sufficiently large such that $\bar D\subset B_{R_0}$. In particular, we denote $\Omega_0\triangleq \Omega\cap B_{2R_0}$. Furthermore, we define the new variables as follows:
\begin{align}\label{xq1q0}
\psi\triangleq\nabla\log\rho,\quad J\triangleq\rho^{-\frac{1+\delta}{2}}H. 
\end{align}
Here, the new function $J$ is the  density-weighted average magnetic field and plays a crucial role in our paper.

The main result of this paper is the following Theorem \ref{T1} concerning the local existence of strong solutions.

\begin{theorem}\label{T1}
For parameters $(\gamma,\delta)$ satisfy
\begin{equation}
\gamma>1,\ 0<\delta<1. \label{115}
\end{equation}
If the initial data $(\rho_0, u_0, H_0)$ satisfies
\begin{equation}
\begin{cases}\label{1117}
\rho_0^\frac{1-\delta}{2}u_0\in L^2,\ u_0\in D^1\cap D^2,\ H_0\in H^{2},\ \div{H_0}=0,\\
J_0\triangleq H_0/\rho_0^\frac{1+\delta}{2}\in H^{1},\ \rho_0^{\gamma-\frac{1+\delta}{2}}\in {D_0^1\cap D^2},\ \nabla\rho_0^\frac{\delta-1}{2}\in D_0^{1}, 
\end{cases}
\end{equation}
and the compatibility condition:
\begin{align}\label{118}
\mathcal{L}(u_0)=\rho_0^\frac{1-\delta}{2}g
\end{align}
for some $g\in L^2$. Then there exist a positive time $T_0>0$ such that the  IBVP \eqref{1}-\eqref{7} has a unique strong solution $(\rho, u, H)$ on $[0,T_0]\times \Omega$ satisfying 
\begin{equation}\label{e116}
\left\{ \begin{array}{l}
\rho^{\gamma-\frac{1+\delta}{2}}\in L^\infty([0,T_0];{W^{1,6}\cap D^1\cap D^2}),\\ \nabla\rho^\frac{\delta-1}{2}\in L^\infty([0,T_0];D_0^{1}),\quad \rho^\frac{\delta-1}{2}\in L^\infty([0,T_0];L^6(\Omega_0)),\\\
u\in L^\infty([0,T_0];D_0^1\cap D^2)\cap L^2([0,T_0];D^3),\\
H\in L^\infty([0,T_0];H^2)\cap L^2([0,T_0];D^3),\ J\in L^\infty([0,T_0];H^1)\cap L^2([0,T_0];D^2),\\
u_t\in L^2([0,T_0];D_0^1),\ H_t\in L^2([0,T_0];H^1),\ J_t\in L^2([0,T_0];L^2),\\
\rho^\frac{1-\delta}{2}u_t,\ H_t \in L^\infty([0,T_0];L^2),\\
\rho^\frac{\delta-1}{2}\mathcal{L}u,\ \rho^\frac{\delta-1}{2}\nabla\mathrm{div}u\in L^2([0,T_0];H^1).
\end{array} \right.
\end{equation}
\end{theorem}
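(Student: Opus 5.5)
We reformulate the problem so the singular viscous term becomes a lower-order coupling. Set $\phi=\rho^{\gamma-\frac{1+\delta}{2}}$ (suitably normalized), $\psi=\nabla\log\rho$, and $J=\rho^{-\frac{1+\delta}{2}}H$ as in \eqref{xq1q0}. Divide the momentum equation by $\rho^\delta$. Then $u$ solves
\begin{equation*}
\rho^{1-\delta}(u_t+u\cdot\nabla u)+\mathcal{L}u=-\rho^{-\delta}\nabla P+\frac{\delta}{\delta-1}\,\nabla\rho^{\delta-1}\cdot\mathbb{T}^\ast(u)+\rho^{1-\delta}\,(\mathrm{curl}\,H\times H)/\rho ,
\end{equation*}
with $\mathcal{L}u=-\mu\Delta u-(\mu+\lambda)\nabla\mathrm{div}u$ subject to the Navier-slip condition \eqref{ch1}. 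The pressure term equals $\frac{2\gamma}{2\gamma-1-\delta}\rho^{\frac{1-\delta}{2}}\nabla\phi$-type expressions. The Lorentz term written with $J$ becomes $\rho^{\frac{1-\delta}{2}}\,(\mathrm{curl}(\rho^{\frac{1+\delta}{2}}J)\times J)$, which is where the faster decay of $H$ compensates the weight $\rho^{-\delta}$.

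The scheme is as follows. First, for $\epsilon>0$ we regularize the initial density, $\rho_0^\epsilon=\rho_0+\epsilon$, or equivalently regularize $\rho_0^{\frac{1-\delta}{2}}$, so that the viscosity is nondegenerate. We also truncate to $\Omega_R$. On this approximate problem we run a linearization: given $v$ in the solution class, solve the transport equations for $\phi$ and for $\nabla\rho^{\frac{\delta-1}{2}}$ along $v$, then the linear parabolic equation for $H$ with conditions \eqref{chuy}, then the linear degenerate-parabolic equation for $u$. Existence for each linear problem follows from the classical nondegenerate theory. Next we derive a priori estimates uniform in $\epsilon$ and $R$, on a time $T_0$ that depends only on the norms in \eqref{1117} and on $g$ in \eqref{118}. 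The key quantities are
\begin{equation*}
\|\rho^{\frac{1-\delta}{2}}u_t\|_{L^2},\quad \|\nabla u\|_{H^1},\quad \|\phi\|_{D^1\cap D^2},\quad \|\nabla\rho^{\frac{\delta-1}{2}}\|_{D^1_0},\quad \|H\|_{H^2},\quad \|J\|_{H^1}.
\end{equation*}

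For $u$ we use elliptic estimates for $\mathcal{L}$ with Navier-slip conditions in exterior domains (the $\mathrm{div}$/$\mathrm{curl}$ estimates of \cite{Cai2112}) to bound $\|\nabla^2u\|_{L^2}$ by $\|\mathcal{L}u\|_{L^2}$ plus lower-order terms. We then control $\mathcal{L}u$ through $\rho^{\frac{1-\delta}{2}}u_t$ together with $\nabla\rho^{\delta-1}\nabla u$. The latter is handled using $\nabla\rho^{\delta-1}=2\rho^{\frac{\delta-1}{2}}\nabla\rho^{\frac{\delta-1}{2}}$, with $\nabla\rho^{\frac{\delta-1}{2}}\in L^6$ and $\rho^{\frac{\delta-1}{2}}$ bounded on $\Omega_0$ and weighted elsewhere. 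This follows \cite{lly26ns}, and the pure Navier–Stokes part of the estimates can be quoted from there. The compatibility condition \eqref{118} gives $\rho^{\frac{1-\delta}{2}}u_t(0)\in L^2$, which supplies the time-weighted-free estimate for $u_t$. The transport estimates for $\phi$ and $\nabla\rho^{\frac{\delta-1}{2}}$ need $\nabla u\in L^1_tW^{1,\infty}$, which we obtain from $u\in L^2D^3$ via the $H^1$ bound on $\rho^{\frac{\delta-1}{2}}\mathcal{L}u$.

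The new ingredient, and the main obstacle, is $J$. It satisfies
\begin{equation*}
J_t-\eta\Delta J=\text{terms like } \eta\,\rho^{-\frac{1+\delta}{2}}\Delta\rho^{\frac{1+\delta}{2}}J,\ \ \eta\,\psi\cdot\nabla J,\ \ u\cdot\nabla J,\ \ J\,\nabla u,\ \ J\,\mathrm{div}u,
\end{equation*}
where $\rho^{-\frac{1+\delta}{2}}\nabla\rho^{\frac{1+\delta}{2}}\sim\psi$ and the second derivatives produce $|\psi|^2$ and $\nabla\psi$. We must close an $H^1$ estimate for $J$ with $L^2D^2$ dissipation, so we need $\psi$ in spaces compatible with $L^3$ or $L^\infty$ weights. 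I would write $\psi=\frac{2}{\delta-1}\rho^{\frac{1-\delta}{2}}\nabla\rho^{\frac{\delta-1}{2}}$ and exploit the fact that $\rho^{\frac{1-\delta}{2}}$ is bounded; then $\psi\in L^6\cap D^1$ essentially follows from $\nabla\rho^{\frac{\delta-1}{2}}\in D^1_0$. The boundary condition for $J$ inherits $J\cdot n=0$ and $\mathrm{curl}J\times n=-(\psi\times J)\times n$-type terms, and the resulting boundary integrals are absorbed using trace inequalities on the compact boundary, where $\rho$ is bounded below on $\Omega_0$. We then use $J$ to rewrite the magnetic contributions in the $u$ and $\phi$ estimates without negative powers of $\rho$. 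All estimates close into a Gronwall-type inequality of the form $Y'\le C Y^N$, which gives the uniform time $T_0$.

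Finally, we pass to the limit $\epsilon\to0$ and $R\to\infty$ by compactness (Aubin–Lions on bounded subdomains together with weak-* limits). We recover $\rho$ from $\phi$, with positivity preserved along characteristics, and identify $J=\rho^{-\frac{1+\delta}{2}}H$. Uniqueness is proved via an $L^2$ energy estimate for the differences $\rho^{\frac{1-\delta}{2}}(u_1-u_2)$, $H_1-H_2$, $\phi_1-\phi_2$ and $\nabla\rho_1^{\frac{\delta-1}{2}}-\nabla\rho_2^{\frac{\delta-1}{2}}$, using the regularity class \eqref{e116} and Gronwall's inequality.
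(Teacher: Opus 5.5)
Your overall architecture matches the paper's: divide by $\rho^\delta$, introduce $\psi$ and $J$, approximate by positive-density problems on $\Omega_R$, derive uniform bounds, pass to the limit by compactness, and prove uniqueness by Gronwall. However, the step the paper singles out as the main new difficulty is not actually handled. Closing $J\in L^\infty_tH^1\cap L^2_tD^2$ requires testing the $J$-equation with $J_t$ (testing with $-\Delta J$ produces the same term), and this gives the boundary integral
\[
\int_{\partial\Omega}(\mathrm{curl}J\times n)\cdot J_t\,dS=-\frac{1+\delta}{2}\int_{\partial\Omega}(\psi\cdot n)\,J\cdot J_t\,dS .
\]
Uniformly in the approximation, the only control on $J_t$ is in $L^2_{t,x}$. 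So $J_t$ has no trace, and no trace inequality can absorb this term. The lower bound of $\rho$ on $\Omega_0$ is necessary but not sufficient. The missing idea is to use the mass equation to write
\[
J_t=\rho^{-\frac{1+\delta}{2}}H_t-\bigl(u\cdot\nabla\rho^{-\frac{1+\delta}{2}}\bigr)H+\frac{1+\delta}{2}\rho^{-\frac{1+\delta}{2}}H\,\mathrm{div}u ,
\]
and then apply the trace $W^{1,1}(\Omega_0)\hookrightarrow L^1(\partial\Omega)$. This needs two further inputs:
\begin{itemize}
\item $\nabla H_t\in L^2_{t,x}$, so the time-differentiated induction equation must be estimated simultaneously;
\item the bound $\|\rho^{-\frac{1+\delta}{2}}\|_{L^\infty(\Omega_0)}\le C\|\rho^{\frac{\delta-1}{2}}\|_{W^{1,6}(\Omega_0)}^{\frac{1+\delta}{1-\delta}}$, which must be propagated in time (the paper puts $\|\rho^{\frac{\delta-1}{2}}\|_{L^6(\Omega_0)}$ into the energy functional).
\end{itemize}
You also need an $H^2$ estimate for $\Delta J$ under the $\psi$-dependent boundary condition (the paper's Lemma 2.4) to convert $\|J_t\|_{L^2}$ into $\|\nabla^2J\|_{L^2}$. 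That bound is what controls the magnetic contribution $\nabla(J\cdot\nabla H-\nabla H\cdot J)$ in $\nabla(\rho^{\frac{\delta-1}{2}}\mathcal{L}u)$. Relatedly, the transport estimate for $\nabla\rho^{\frac{\delta-1}{2}}$ does not need $\nabla u\in L^1_tW^{1,\infty}$, which $u\in L^2_tD^3$ does not give in 3D. It needs the weighted bound $\rho^{\frac{\delta-1}{2}}\nabla\mathrm{div}u\in L^2_t(L^6\cap D^1)$, because that singular term appears in its equation.

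Your uniqueness argument also does not close as stated. The momentum difference contains $\bar\psi\cdot\mathcal{S}(u_2)$, so a first-order density difference must be carried. For your choice $\nabla(\rho_1^{\frac{\delta-1}{2}}-\rho_2^{\frac{\delta-1}{2}})$, the transport equation contains $\frac{\delta-1}{2}\rho_1^{\frac{\delta-1}{2}}\nabla\mathrm{div}\bar u$. This is one derivative beyond what an $L^2$ estimate of $\rho_1^{\frac{1-\delta}{2}}\bar u$ yields, and it carries a weight that blows up at infinity. The paper instead uses $\bar\psi=\nabla\log\rho_1-\nabla\log\rho_2$, whose equation contains $-\nabla\mathrm{div}\bar u$ with no weight. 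It pairs this with $\bar h=\rho_1^{\frac{1-\delta}{2}}-\rho_2^{\frac{1-\delta}{2}}$ and $\bar\varphi=\rho_1^{\gamma-\delta}-\rho_2^{\gamma-\delta}$ in $H^1$. It also adds an $H^1$-level estimate for $\bar u$ by testing with $\bar u_t$, so that the Lam\'e estimate $\|\nabla^2\bar u\|_{L^2}\le C\|\rho_1^{\frac{1-\delta}{2}}\bar u_t\|_{L^2}+\cdots$ absorbs the derivative loss. Finally, the Lorentz-force difference produces $(\rho_1^{-\delta}-\rho_2^{-\delta})$ times terms quadratic in $H$. This negative power of the density is controlled by none of your listed differences. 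The paper rewrites $\rho_i^{-\delta}H_i$ and $\rho_i^{-\delta}\nabla H_i$ through $\rho_i^{\frac{1-\delta}{2}}J_i$, $\rho_i^{\frac{1-\delta}{2}}\nabla J_i$ and $\psi_i$, and introduces the extra difference $\bar g=\rho_1^{-\delta}-\rho_2^{-\delta}$.
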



\begin{remark}
The conditions \eqref{115}-\eqref{1117} in Theorem \ref{T1} identify a class of admissible
initial data that makes the problem \eqref{1}-\eqref{7} solvable, which satisfy by, for example,
\begin{equation}\nonumber
\rho_0(x)=\frac{1}{1+|x|^{2\alpha}},\ u_0(x)\in C_0^2(\Omega),\
H_0(x)=\left(\frac{1}{1+|x|^{2\delta_1}},\frac{1}{1+|x|^{2\delta_2}},\frac{1}{1+|x|^{2\delta_3}}\right),
\end{equation}
where $\frac{1}{2(2\gamma-1-\delta)}<\alpha<\frac{1}{2(1-\delta)}$ and $\delta_i>\frac{1+\delta}{4}\alpha+\frac34 \ (i=1,2,3)$. In particular, the range of $\delta$ is independent of $\gamma$, which means Theorem \ref{T1} applies to all $\gamma>1$ and $\delta\in(0,1)$. And, the magnetic field $H$ may have compact support.
Moreover, if $2\gamma\leq\delta+\frac{4}{3}$, then $\rho_0\in L^1(\Omega)$, and the density $\rho$ constructed in Theorem \ref{T1} may have finite total mass.
\end{remark}

\begin{remark} It deduces from $J_0\triangleq H_0/\rho_0^\frac{1+\delta}{2}\in H^{1}$ that  $H_0$ decays  faster than $\rho_0^\frac{1+\delta}{2}$ with respect to $x$. Indeed, Theorem \ref{T1} gives $J\in L^\infty([0,T_0];H^1)$, which illustrates that   the magnetic field $H$ will maintained the quality of decaying faster rate than $\rho^\frac{1+\delta}{2}$  throughout the time evolution. This new observation  plays a crucial role in our  analysis for controlling the singular terms associated with the magnetic field.  
\end{remark}

\begin{remark}\label{re4}
In fact, based on the choice of $\rho_0$ (see \eqref{1117}) and Lemma \ref{087}, we can obtain that $\rho_0$ has a positive lower bound on $\Omega_0$, and therefore it has
\begin{equation}\label{260114}
    \rho_0^\frac{\delta-1}{2}\in L^6(\Omega_0).
\end{equation}Theorem \ref{T1} gives a new priori estimate
$\rho^\frac{\delta-1}{2}\in L^\infty([0,T_0];L^6(\Omega_0))$, which combining with $\nabla\rho^\frac{\delta-1}{2}\in L^\infty([0,T_0];D_0^1)$ gives that $\rho$ has a positive lower bound near the boundary $\partial\Omega$. Roughly speaking, the density  retains a positive lower bound on $\Omega_0$ as initial one.
\end{remark}
\begin{remark}
For the time continuity, similar to \cite[Remark 1.2]{Lili2025}, we can also deduce from \eqref{e116} and the classical Sobolev embedding results that
\begin{gather}
    \rho^{\gamma-\frac{1+\delta}2}\in C([0, T_0];D_0^1\cap D^2),\ \nabla\rho^{\frac{\delta-1}{2}}\in C([0, T_0];D_0^1),\label{lxxp}
    \\
    u\in C([0,T_0];D_0^1\cap D^2),\
    \rho^\frac{1-\delta}2u,\ \rho^\frac{1-\delta}2u_t\in C([0,T_0];L^2),\nonumber\\
    H\in C([0,T_0];H^2),\
    H_t\in C([0,T_0];L^2),\ J\in C([0,T_0];H^1).\nonumber
\end{gather}
Furthermore, \eqref{lxxp} together with Lemma \ref{087} implies that a vacuum can only occur at infinity.
\end{remark}
\begin{remark}
 When there is no electromagnetic field effect, that is  $H=0$, \eqref{1} turns to be the compressible Navier-Stokes equations, and Theorem \ref{T1} is similar to the results of \cite{lly26ns}. Roughly speaking, we generalize the results of \cite{lly26ns} to the compressible MHD system.
\end{remark}


\begin{remark} It's should be noted here that one can   use the 
similar arguments in  Theorem \ref{T1} under some slight modifications to establish the local  strong solution to the Cauchy problem of system \eqref{1}, in which the density-weighted average magnetic field is defined by $\tilde J=H/\rho^\frac{1+\delta}{4}$ and  some additional boundary estimates ( such as \eqref{1qaqa4},  \eqref{ze313}) are not needed.  
\end{remark}

\begin{remark}
For the zero magnetic dissipation problem of system \eqref{1} ($\eta=0$), $H$  satisfies a hyperbolic equation and does not need to be given boundary conditions. That is, there is no difficulty comes from the boundary. we can also obtain a local strong solution to the system \eqref{1} with $\eta=0$.
\end{remark}

We now provide our analysis and commentary on the key aspects of this paper. 
First, motivated by \cite{Lili2025} (see also \cite{lly26ns}), we multiply \eqref{1}$_2$ by $\rho^{-\delta}$  and then reformulate \eqref{1}$_2$ as
\begin{equation}
\begin{split}\label{xq1q}
\rho^{1-\delta}(u_t+u\cdot\nabla u)-\mathcal{L}u=&-\rho^\frac{1-\delta}2\Big(\frac{2a\delta}{2\gamma-1-\delta}\nabla\rho^{\gamma-\frac{1+\delta}{2}}
-\frac{2\delta}{\delta-1}\nabla\rho^\frac{\delta-1}{2}\cdot \mathcal{S}(u)\\
&-\rho^{-\frac{1+\delta}{2}}H\cdot\nabla H+\rho^{-\frac{1+\delta}{2}}\nabla H\cdot H\Big),
\end{split}
\end{equation}
where
\begin{equation}\label{q1q1}
\mathcal{L}u=\mu\Delta u+(\mu+\lambda)\nabla \mathrm{div}u,\ \mathcal{S}(u)=2\mu\mathcal{D}(u)+\lambda \mathrm{div}u\mathbb{I}_3.
\end{equation}
In order to handle the singular terms 
\begin{equation}\label{jxlbq}
\rho^{-\frac{1+\delta}{2}}H\cdot\nabla H,~~~~~\rho^{-\frac{1+\delta}{2}}\nabla H\cdot H
\end{equation} 
on the right hand of \eqref{xq1q}, using the new variables $\psi$ and the density-weighted average magnetic field $J$ defined in \eqref{xq1q0}, it deduce from $\eqref{1}_1$ and $\eqref{1}_3$  that $J$    satisfies the following standard parabolic equation
\begin{equation}\label{xq1q2}
    J_t+u\cdot\nabla J-\eta\Delta J=J\cdot\nabla u+\frac{\delta-1}{2}J\mbox{div} u+\frac{1+\delta}{2}\eta J\mbox{div}\psi+\frac{(1+\delta)^2}{4}\eta J\psi^2+(1+\delta)\eta\psi\cdot\nabla J,
\end{equation}
and the following boundary conditions
\begin{equation}\label{bdy-J}
J\cdot n=0,\quad \curl J\times n=-\frac{1+\delta}{2}(\psi\cdot n)J,\quad \mbox{on}\quad \partial\Omega.
\end{equation}
In fact, by direct calculations, we obtain
\begin{align*}
\curl J=\rho^{-\frac{1+\delta}{2}}\curl H+\nabla\rho^{-\frac{1+\delta}{2}}\times H=\rho^{-\frac{1+\delta}{2}}\curl H-\frac{1+\delta}{2}\psi\times J,
\end{align*}
which combined with \eqref{chuy} yields
\begin{align*}
\curl J\times n=-\frac{1+\delta}{2}\psi\times J\times n=-\frac{1+\delta}{2}(\psi\cdot n)J.
\end{align*}


Next, with the reformulate equations \eqref{xq1q} and \eqref{xq1q2} at hand, the system \eqref{1} could be transformed into the following equations on $(\rho,u,H,J)$:
\begin{equation}
\begin{cases}\label{q1q}
  \rho_t+\mathrm{div}(\rho u)=0,\\
\rho^{1-\delta}(u_t+u\cdot\nabla u)-\mathcal{L}u=\\
\quad-\rho^\frac{1-\delta}2\Big(\frac{2a\gamma}{2\gamma-1-\delta}\nabla\rho^{\gamma-\frac{1+\delta}{2}}
-\frac{2\delta}{\delta-1}\nabla\rho^\frac{\delta-1}{2}\cdot \mathcal{S}(u)-J\cdot\nabla H+\nabla H\cdot J\Big),\\
H_{t}- \eta \Delta H=\curl(u\times H),\quad \div H=0,\\
J_t+u\cdot\nabla J-\eta\Delta J=\\
\quad J\cdot\nabla u+\frac{\delta-1}{2}J\mbox{div} u+\frac{1+\delta}{2}\eta J\mbox{div}\psi+\frac{(1+\delta)^2}{4}\eta J\psi^2+(1+\delta)\eta\psi\cdot\nabla J.
\end{cases}
\end{equation}
Therefore, we aim to search the strong solutions to \eqref{q1q} and thus establish  the strong solutions  of  system \eqref{1}. 

To obtain a strong solution of the system  \eqref{q1q},  we will establish  some uniform a priori estimates on the smooth solutions with positive initial density to the  approximate problem \eqref{9}  in an annular domain $\Omega_R$, see Lemma \ref{l25}. Inspired by Li-L\"u-Yuan\cite{lly26ns}, we also employ the cut-off function technique in dealing with the $\div$-$\curl$ estimates and elliptic estimates on $\Omega_R$. This along with the fact that $\rho$ has a positive lower bound near the boundary, plays a crucial role in our subsequent analysis concerning the multi-connected annular domain $\Omega_R$  and the boundary terms. However, there are new difficulties degenerated from the $H$ and $J$. 

As mentioned in \cite{Lili2025,lly26ns}, one of the key estimates is  to obtain the $L^\infty(0,T_0;D_0^{1}(\Omega_R))$-norm of $\nabla\rho^\frac{\delta-1}{2}$, which is dominated by $\rho^\frac{\delta-1}{2}\mathcal{L}u$ and $\rho^\frac{\delta-1}{2}\curl\curl u$, see  \eqref{zsa1}--\eqref{zsa2}. Based on the results in \cite{lly26ns} concerning the compressible Navier-Stokes equations, it suffices to estimate the terms involving  $H$ and $J$, where  the estimation of $H$ could be derived directly from the standard parabolic equation \eqref{q1q}$_3$  without singular terms. Consequently, our main effort will be devoted to obtaining   the estimations of $J$  involving a second-order  term of $\rho$, that is $\div \psi=\div (\nabla \log\rho)$, see \eqref{q1q}$_4$. Among these,  the primary challenge is to address the boundary terms associated with $J$. More precisely, since the boundary condition \eqref{bdy-J} of $J$ depends on $\psi$, one has following  derivation to the boundary term of $J$:
\begin{align}\label{pmy1}
\int_{\partial\Omega_R}(\mbox{curl}J\times n)\cdot J_tdS=-\frac{1+\delta}{2}\int_{\partial\Omega}(\psi \cdot n)J \cdot J_tdS.
\end{align}
Obviously, this boundary term can not be handled directly by the trace theorem due to the lack of  integrability of $\nabla J_t$.   Fortunately, one can transform  $J_t$ into $H_t$ as follows
$$
J_t=H_t\rho^{-\frac{1+\delta}{2}}-Hu\cdot\nabla\rho^{-\frac{1+\delta}{2}}+\frac{1+\delta}{2}H\rho^{-\frac{1+\delta}{2}}\div u.
$$
Combined this with the integrability of $\nabla H_t$ (see \eqref{pop11} and \eqref{hqaz}) and the fact that $\rho$ has a positive lower bound near the boundary $\partial\Omega$ (see \eqref{rho_lb}), one can apply the  trace theorem and thus derive  the desired estimate for the boundary term \eqref{pmy1}, see \eqref{ze313}.

Finally, in order to obtain the uniqueness of the solution, 
we need to estimate $\bar J$ $(\bar J=J_1-J_2)$, where one has to deal with   the  second-order derivative of $\rho$. 
To circumvent this  difficulty,   we replace  $\rho_i^\frac{1-\delta}{2}J_i$ by $g_iH_i$ with $g_i=\rho_i^{-\delta}$ (see \eqref{2.96}),  which thus  transforms the   estimate of $\bar J$  into the ones of   $\bar H=H_1-H_2$ and $\bar g=g_1-g_2$. 
However,   we lack any useful information concerning  $g_i=\rho_i^{-\delta}$ ($\rho$ with   negative power). 
To overcome this difficulty, we note that the initial value of  $\bar g$ is 0, that is $\bar g|_{t=0}=0$, which  together with  \eqref{lop1} and standard arguments thus  completes the proof of  the  uniqueness.

The rest of the paper is organized as follows: In Section 2, we collect some elementary facts and inequalities that will be needed in later analysis. Section 3 is devoted to the a priori estimates that are needed to obtain the local existence and uniqueness of strong solutions. Then the main result, Theorem 1.1 is proved in Section 4.

\section{Preliminaries}
In this section, some useful known facts and inequalities are listed. The first is the well-known Gagliardo-Nirenberg inequality \cite{10}.
\begin{lemma}\label{G-N}
Assume that $U\subset \mathbb{R}^3$ is a bounded Lipschitz domain, for $p\in[2,6]$, $q\in(1,\infty)$, and $r\in(3,\infty)$, there exist generic constants $C$,~$C_1$,~$C_2$ which depend on $p, q, r,$ and the Lipschitz character of $U$, such that for any
$f\in H^1(U),\ and\ g\in L^q(U)\cap D^{1,r}(U),$
one has
\begin{equation}
\|f\|_{L^p(U)}\leq C\|f\|^{(6-p)/2p}_{L^2(U)}\|\nabla f\|^{(3p-6)/2p}_{L^2(U)}+C_1\|f\|_{L^2(U)},
\end{equation}
\begin{equation}
\|g\|_{C(\bar{U})}\leq C\|g\|^{q(r-3)/(3r+q(r-3))}_{L^q(U)}\|\nabla g\|^{3r/(3r+q(r-3))}_{L^r(U)}+C_2\|g\|_{L^q(U)}.
\end{equation}
Moreover, if $f\cdot n|_{\partial U} = 0$, we can choose $C_1 = 0$. Similarly, the constant $C_2 = 0$ provided $g\cdot n|_{\partial U} = 0$.
\end{lemma}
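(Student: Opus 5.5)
The plan is the standard route for these Gagliardo--Nirenberg inequalities on a bounded Lipschitz domain: extend to the whole space, apply the scaling-invariant $\mathbb{R}^3$ inequality, and let the lower-order term absorb what the extension costs.

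\textbf{The $L^p$ bound.} Since $U$ is bounded and Lipschitz, there is a Stein (or Calder\'on) extension operator $E$ bounded from $L^2(U)$ to $L^2(\mathbb{R}^3)$ and from $H^1(U)$ to $H^1(\mathbb{R}^3)$. Its norms depend only on the Lipschitz character of $U$. First, interpolate $f$ between $L^2$ and $L^6$ by H\"older:
\[
\|f\|_{L^p(U)}\le \|f\|_{L^2(U)}^{(6-p)/2p}\,\|f\|_{L^6(U)}^{(3p-6)/2p}.
\]
Next, apply the Sobolev inequality on $\mathbb{R}^3$ to $Ef$:
\[
\|f\|_{L^6(U)}\le C\|\nabla Ef\|_{L^2(\mathbb{R}^3)}\le C\bigl(\|\nabla f\|_{L^2(U)}+\|f\|_{L^2(U)}\bigr).
\]
Finally, substitute and use $(a+b)^\theta\le a^\theta+b^\theta$. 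The contribution of $b=\|f\|_{L^2}$ produces exactly the term $C_1\|f\|_{L^2(U)}$.

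\textbf{The $C(\bar U)$ bound.} The same scheme works, with Morrey's inequality used in a localized form. For $x\in\bar U$ and a radius $s$ below a scale fixed by the Lipschitz cone condition, compare $g(x)$ with its average over $U\cap B_s(x)$:
\[
|g(x)-\bar g_s|\le Cs^{1-3/r}\|\nabla g\|_{L^r(U)},\qquad |\bar g_s|\le Cs^{-3/q}\|g\|_{L^q(U)}.
\]
If the optimal radius is admissible, choose it, namely
\[
s=\bigl(\|g\|_{L^q}/\|\nabla g\|_{L^r}\bigr)^{qr/(3r+q(r-3))}.
\]
This balances the two terms and gives the stated exponents $q(r-3)/(3r+q(r-3))$ and $3r/(3r+q(r-3))$. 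If the optimal radius exceeds the admissible scale, take $s$ equal to that fixed scale. The bound is then of the form $C\|g\|_{L^q}$, and this is the $C_2$ term.

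\textbf{Removing $C_1$ and $C_2$ under $f\cdot n=0$ (resp.\ $g\cdot n=0$).} This is the main obstacle, because the lower-order term exists only to control constant (or harmonic-field) modes. The plan is a compactness-contradiction argument. Suppose the inequality without $C_1$ failed. Normalize to obtain $f_k$ with $\|f_k\|_{L^p}=1$ whose right-hand sides tend to zero; by H\"older interpolation (step 1) $\|f_k\|_{L^2}$ is then bounded below. Hence $\|\nabla f_k\|_{L^2}/\|f_k\|_{L^2}\to0$. By Rellich, a subsequence of $f_k/\|f_k\|_{L^2}$ converges in $L^2$ to a constant vector $c$ with $|c|$ normalized to $1$ and $c\cdot n=0$ on $\partial U$.

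For a bounded domain, $c\cdot n\equiv0$ on $\partial U$ forces $c=0$: by the divergence theorem,
\[
0=\int_{\partial U}(c\cdot n)\,(c\cdot x)\,dS=\int_U |c|^2\,dx .
\]
This is a contradiction. It yields a Poincar\'e inequality $\|f\|_{L^2}\le C\|\nabla f\|_{L^2}$ for such $f$, which lets the $C_1$ term be absorbed into the main term. The $C_2$ case is handled the same way using $\|g\|_{L^q}\le C\|\nabla g\|_{L^r}$.
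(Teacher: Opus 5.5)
Your argument is correct. The paper gives no proof of Lemma~\ref{G-N}; it simply quotes it from \cite{10}. So what you provide is a self-contained derivation rather than a variant of the paper's argument. Its value is that it isolates why the tangential refinement holds. Extension plus whole-space Sobolev gives the $L^p$ bound, and localized Morrey with the balancing radius gives the sup bound, where the fixed-scale fallback is exactly the $C_2$ term. After that, the only obstruction to dropping $C_1,C_2$ is constant fields, and the identity $\int_{\partial U}(c\cdot n)(c\cdot x)\,dS=|c|^2|U|$ rules out tangential ones.

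Two details to tighten. First, prove the oscillation bound by averaging over a truncated interior cone at $x$, which is convex with volume comparable to $s^3$; $U\cap B_s(x)$ need not be convex, or even connected, near a Lipschitz boundary. Second, in the $C_2$ case, say where precompactness in $L^q$ comes from: Rellich in $W^{1,\min(q,r)}$, or $W^{1,r}\subset\subset C(\bar U)$ when $q>r$.

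The price of the compactness route is a non-explicit constant depending on $U$ itself. You can make it explicit with the quantitative form of your identity, $\int_U f\,dx=-\int_U (x-x_0)\,\mathrm{div}f\,dx$ for $f\cdot n=0$, combined with Poincar\'e--Wirtinger; this gives a constant in terms of $\mathrm{diam}\,U$.

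Some global dependence cannot be avoided, though. Take the flat cylinders $\{|x_3|<1,\ |x'|<L\}$ with rounded rims, and $f=(\chi(|x'|/L),0,0)$ for a fixed cutoff $\chi$. This $f$ is tangential with $\|\nabla f\|_{L^2}=O(1)$, yet $\|f\|_{L^6}\gtrsim L^{1/3}$. So the $C_1=0$ constant cannot depend on local Lipschitz data alone. In particular, the $R$-independence on $\Omega_R$ that Section~3 relies on needs a separate argument, whichever way the lemma itself is proved.
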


The following lemma, whose proof is established in \cite{Aramaki2014Lp, Cai2112,lly26ns}, plays a crucial role in our analysis. 
\begin{lemma}\label{l22}
For any $u\in W^{1,p}(\Omega_R)$ with  $p\in(1,\infty)$, there exists some positive constant $C$ depending only on $p$ and $R_0$ such that
\begin{itemize}
    \item When $u\cdot n=0$ on $\partial\Omega_R$, one has
\begin{align}\label{uwkq}
\|\nabla u\|_{L^{p}(\Omega_R)}\leq C\bigr(\|\div u\|_{L^{p}(\Omega_R)}+\|\curl u\|_{L^{p}(\Omega_R)}+\|u\|_{L^p(\Omega_{0})}\bigr),
\end{align}
\item when $u\cdot n \neq 0$ on $\partial\Omega_R$, one has
\begin{align}\label{uwkq1}
\|\nabla u\|_{L^{p}(\Omega_R)}\leq C\bigr(\|\div u\|_{L^{p}(\Omega_R)}+\|\curl u\|_{L^{p}(\Omega_R)}+\|u\|_{L^{p}(\Omega_0)}+\|u\cdot n\|_{W^{1-1/p,p}(\partial\Omega_R)}\bigr).
\end{align}
\end{itemize}
\end{lemma}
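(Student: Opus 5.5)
The plan is to reduce Lemma \ref{l22} on the multiply bounded set $\Omega_R$ to two classical results: the div--curl estimate on a bounded smooth domain near the obstacle, and the whole-space Calder\'on--Zygmund estimate away from it. The two are glued with a cut-off function. Take $\chi\in C_c^\infty(B_{2R_0})$ with $\chi\equiv1$ on $B_{R_0+1/2}$ and $0\le\chi\le1$, and split $u=\chi u+(1-\chi)u=:u_1+u_2$. Then $u_1$ is supported in $\Omega_0=\Omega\cap B_{2R_0}$, which is a bounded smooth domain. The function $u_2$ vanishes near $\partial D$ and is supported in the shell region $\{R_0+1/2\le|x|\}\cap B_R$. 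Its only boundary is the outer sphere $\partial B_R$.

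For $u_1$ we use the $L^p$ div--curl estimate on bounded domains from \cite{Aramaki2014Lp}. With $u_1\cdot n=0$ on $\partial\Omega$ (first case), it gives
\begin{align*}
\|\nabla u_1\|_{L^p(\Omega_0)}\le C\bigl(\|\div u_1\|_{L^p}+\|\curl u_1\|_{L^p}+\|u_1\|_{L^p}\bigr).
\end{align*}
Since $D$ is simply connected, the lower-order term can be taken as $\|u\|_{L^p(\Omega_0)}$; no harmonic-field correction is needed beyond this $L^p$ term. Next, $\div u_1=\chi\div u+\nabla\chi\cdot u$ and $\curl u_1=\chi\curl u+\nabla\chi\times u$. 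The commutator terms are bounded by $C\|u\|_{L^p(\Omega_0)}$, because $\nabla\chi$ is supported in $\Omega_0$.

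For $u_2$, the key point is that all constants must be independent of $R$. In the first case ($u\cdot n=0$ on $\partial B_R$), I would reflect $u_2$ across the sphere $\partial B_R$ via the Kelvin-type inversion. Alternatively, one can note that $\partial B_R$ has curvature bounded by $1/R\le 1$, so a localized half-space div--curl estimate with slip condition holds with uniform constants. I expect the cleaner route is this: use the identity $-\Delta u_2=\curl\curl u_2-\nabla\div u_2$, with the Neumann-type structure coming from $u_2\cdot n=0$ on $\partial B_R$ and $u_2=0$ near $\partial D$. Then invoke the $L^p$ estimate on the ball $B_R$ for vector fields tangent to the sphere, scaled to $B_1$. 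Both $\|\nabla u_2\|_{L^p}$ and $\|\div u_2\|_{L^p}+\|\curl u_2\|_{L^p}$ are scale invariant, so the constant is uniform in $R$. The lower-order term that the ball estimate produces, namely $\|u_2\|_{L^p(B_R)}/R$, is harmless only after one removes it: a tangent field on a ball with vanishing div and curl is zero (the ball is simply connected and has trivial second cohomology), so the lower-order term can be dropped. The commutators again live in $\Omega_0$ and are bounded by $\|u\|_{L^p(\Omega_0)}$. Summing the estimates for $u_1$ and $u_2$ gives \eqref{uwkq}.

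For \eqref{uwkq1}, the same decomposition applies. Now the bounded-domain estimates on $\Omega_0$ and $B_R$ carry the additional term $\|u\cdot n\|_{W^{1-1/p,p}}$ on the respective boundaries. I would handle it by subtracting a lifting $w$: solve $\Delta\phi=c$ with $\partial_n\phi=u\cdot n$ (with $c$ chosen for compatibility), set $w=\nabla\phi$, and apply the first case to $u-w$. The main obstacle I anticipate is the uniformity in $R$ of the outer-sphere estimate, including the lifting constant and the trace norm on $\partial B_R$. This is handled by the scaling argument above; on $\partial B_R$ the trace norm is scale compatible once $R\ge 2R_0+1$.
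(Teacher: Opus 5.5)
Your argument is correct. The paper gives no proof of this lemma; it cites \cite{Aramaki2014Lp} for the bounded-domain div--curl estimate and \cite{Cai2112,lly26ns} for the version on $\Omega_R$, and says only that a cut-off technique is used. Your proposal supplies that cut-off argument explicitly.

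\textbf{Where it departs from the exterior-domain case.} The difference from an exterior domain lies in the far piece. Since $(1-\chi)u$ touches $\partial B_R$, the whole-space Calder\'on--Zygmund estimate is not available. You replace it by the div--curl estimate for tangent fields on $B_1$, transported to $B_R$ by dilation. This is exactly what gives an $R$-independent constant, and you correctly identify the two facts it rests on:
\begin{itemize}
\item $\|\nabla v\|_{L^p(B_R)}$ and $\|\mathrm{div}\,v\|_{L^p(B_R)}+\|\mathrm{curl}\,v\|_{L^p(B_R)}$ scale identically under dilation.
\item The lower-order term, which after rescaling becomes $R^{-1}\|u_2\|_{L^p(B_R)}$ and is not controlled by $\|u\|_{L^p(\Omega_0)}$, can be dropped. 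The reason is that the simply connected ball carries no nonzero tangential harmonic fields (von Wahl's estimate, or a compactness argument).
\end{itemize}

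\textbf{The second estimate.} Do the lifting separately on $\Omega_0$ and on $B_R$, not on all of $\Omega_R$. A uniform Neumann $W^{2,p}$ estimate on the two-scale domain $\Omega_R$ does not follow from scaling and would need its own proof. On $B_R$ two checks are needed. First, the compatibility constant satisfies $|c|\,|B_R|^{1/p}\le\|\mathrm{div}\,u_2\|_{L^p(B_R)}$ by the divergence theorem. Second, after rescaling, the $L^p(\partial B_R)$ part of the trace norm enters with the factor $R^{-(1-1/p)}\le 1$. Hence the inhomogeneous norm in \eqref{uwkq1} suffices for $R\ge 1$, as you asserted.

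\textbf{Superfluous remarks.} The appeal to simple connectivity of $D$ on $\Omega_0$ is unnecessary, because the $\|u\|_{L^p(\Omega_0)}$ term absorbs any harmonic fields there. The Kelvin-inversion and curvature alternatives are likewise not needed once you use the scaled ball estimate.
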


Considering the Lam\'e's system
\begin{equation}\label{2.3}
\begin{cases}
		\mathcal{L}u=f,~~~~~&x\in \Omega_R,\\
		u\cdot n = 0,~\mathrm{curl}u\times n=-Au,~~&x\in\partial \Omega,\\
		u\cdot n=0,~\mathrm{curl}u\times n=0, & x\in\partial B_R,
	\end{cases}
\end{equation}
thanks to \cite{Cai2023,Lili2025}, we have the following conclusions on the solutions to \eqref{2.3}.
\begin{lemma}\label{elliptic_estimate}
Let $u$ be a smooth solution of the Lam\'e system \eqref{2.3}, for any $p\in(1,\infty)$ and $k\geq 0$, there exists a positive constant $C$ depending only on $\lambda$, $\mu$, $A$, $R_0$, $p$, and $k$ such that
\begin{equation}\label{key}
\|\nabla^{k+2}u\|_{L^p(\Omega_R)}\leq C\|f\|_{W^{k,p}(\Omega_R)}+C\|\nabla u\|_{L^2(\Omega_R)}.
\end{equation}
\end{lemma}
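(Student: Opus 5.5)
The plan is to prove the estimate by localization, reducing it to three model problems whose constants are independent of $R$. These are a fixed bounded domain near $\partial\Omega$, the whole space in the interior, and a thin shell near the large sphere $\partial B_R$.

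\textbf{Partition of unity.} Fix a partition of unity $\{\chi_j\}$ on $\overline{\Omega_R}$ as follows.
\begin{itemize}
\item $\chi_0$ equals $1$ on $\Omega\cap B_{R_0+1}$ and is supported in $B_{2R_0}$.
\item The remaining cutoffs are supported in balls of radius $1$ with uniformly bounded derivatives and bounded overlap.
\item Those balls meeting $\partial B_R$ are flattened by charts whose $C^{k+2}$ norms are bounded independently of $R$. This is possible because the curvature of $\partial B_R$ is $1/R\le 1$.
\item Near $\partial B_R$ the cutoffs are chosen radial, so $\nabla\chi_j\parallel n$ there.
\end{itemize}

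\textbf{Localized problems.} For each $j$, $v_j=\chi_j u$ solves
\[
\mathcal{L}v_j=\chi_j f+F_j,\qquad F_j=2\mu\nabla\chi_j\cdot\nabla u+(\mu+\lambda)\bigl(\nabla\chi_j\,\mathrm{div}u+\nabla(\nabla\chi_j\cdot u)\bigr)+\mu u\Delta\chi_j .
\]
The boundary conditions become $v_j\cdot n=0$ and $\mathrm{curl}\,v_j\times n=\chi_j\,\mathrm{curl}u\times n+(\nabla\chi_j\times u)\times n$.

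On $\partial B_R$ we have $\nabla\chi_j\parallel n$ and $u\cdot n=0$, so $(\nabla\chi_j\times u)\times n=(\partial_n\chi_j)\bigl((n\cdot n)u-(u\cdot n)n\bigr)=(\partial_n\chi_j)u$. This is a lower-order inhomogeneity. On $\partial\Omega$, where $\chi_0\equiv1$, the condition is unchanged.

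Each localized problem is then handled by known results:
\begin{itemize}
\item For $v_0$ on the fixed bounded domain $\Omega\cap B_{2R_0}$, I would invoke the bounded-domain $W^{k+2,p}$ estimate for Navier-slip boundary conditions from \cite{Cai2023}.
\item Interior pieces are handled by Calder\'on--Zygmund estimates for the constant-coefficient Lam\'e operator in $\mathbb{R}^3$.
\item Pieces near $\partial B_R$ are handled by the half-space estimate for the slip conditions after flattening. The constants are uniform thanks to the uniform chart bounds.
\end{itemize}

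\textbf{Summing and absorbing lower-order terms.} Summing over $j$ with bounded overlap gives
\[
\|\nabla^{k+2}u\|_{L^p(\Omega_R)}\le C\|f\|_{W^{k,p}(\Omega_R)}+C\|u\|_{W^{k+1,p}(\Omega_R)}.
\]
I would argue by induction on $k$. The intermediate derivatives are absorbed by interpolation $\|\nabla^{m}u\|_{L^p}\le\varepsilon\|\nabla^{k+2}u\|_{L^p}+C_\varepsilon\|u\|_{L^p}$, applied on each unit patch.

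This leaves zero-order terms, and I expect reducing them to $\|\nabla u\|_{L^2}$ to be the main obstacle. The difficulty is that $\Omega_R$ has large volume, so a naive H\"older bound loses powers of $R$. My approach is as follows.
\begin{itemize}
\item Since $u\cdot n=0$ on $\partial\Omega_R$, the Sobolev inequality $\|u\|_{L^6(\Omega_R)}\le C\|\nabla u\|_{L^2(\Omega_R)}$ holds with $C$ independent of $R$. I would justify this by extending across $\partial B_R$ by reflection and across $D$ by a fixed extension.
\item Inside the localized estimates, lower-order terms are then only needed on unit patches. On each patch I would bootstrap from $L^6$ and $L^2$ to $L^p$ using local Sobolev embeddings and the already-controlled higher norms. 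Where needed I would use Lemma \ref{l22} to pass between $\nabla u$ and $(\mathrm{div}u,\mathrm{curl}u)$ together with $\|u\|_{L^p(\Omega_0)}$.
\item For $p<2$, the contributions of $F_j$ are of the form $\nabla\chi_j\cdot\nabla u$. Since $\nabla\chi_j$ is supported only in the transition layers, I would control these via the $p=2$ estimate and the fact that the cutoffs can be chosen with $\nabla\chi_j$ supported where summation is handled by bounded overlap.
\end{itemize}
I would first carry out the case $p=2$, where integration by parts and the energy identity $\int\mathcal{L}u\cdot u$ directly give $\|\nabla u\|_{L^2}$, and then extend to general $p$ by this bootstrap.
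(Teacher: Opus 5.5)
There is a genuine gap, located exactly at the step you flag as the main obstacle. Once the far field is cut into unit patches, summation gives $\|\nabla^{k+2}u\|_{L^p(\Omega_R)}\le C\|f\|_{W^{k,p}(\Omega_R)}+C\|u\|_{W^{k+1,p}(\Omega_R)}$. After interpolation you are left with $\|u\|_{L^p(\Omega_R)}$ over the whole growing domain. For $p<6$ this term cannot be bounded by $\varepsilon\|\nabla^{k+2}u\|_{L^p}+C\|f\|_{W^{k,p}}+C\|\nabla u\|_{L^2}$ uniformly in $R$. To see this, take $U\in C_c^\infty(B_1\setminus\overline{B_{1/2}})$ and $u(x)=U(x/R)$, which satisfies the boundary conditions of \eqref{2.3} trivially. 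Then $\|u\|_{L^p}\sim R^{3/p}$, whereas $\|\nabla u\|_{L^2}\sim R^{1/2}$, $\|\nabla^{k+2}u\|_{L^p}\sim R^{3/p-k-2}$ and $\|f\|_{W^{k,p}}\lesssim R^{3/p-2}$. The same scaling shows that for $p<2$ even the first-order term $\|\nabla u\|_{L^p(\Omega_R)}\sim R^{3/p-1}$ is out of reach, so subtracting local means would not save the argument either.

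None of your remedies changes this:
\begin{itemize}
\item Passing from $L^6$ to $L^p$ by H\"older on each patch costs a factor $N^{1/p-1/6}\sim R^{3/p-1/2}$ after summing over $N\sim R^3$ patches.
\item The transition layers of a unit-scale partition of unity cover all of $\Omega_R$, so they cannot be made small.
\item For $p=2$ (the case used in \eqref{e39}), the energy identity controls $\|\nabla u\|_{L^2}$, not the leftover $\|u\|_{L^2(\Omega_R)}$; the Poincar\'e constant of $\Omega_R$ is of order $R$.
\item The uniform Sobolev inequality $\|u\|_{L^6(\Omega_R)}\le C\|\nabla u\|_{L^2(\Omega_R)}$ only rescues $p\ge 6$.
\end{itemize}

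The missing idea is that away from $\partial\Omega$ you must use estimates with no lower-order term at all, so that lower-order terms arise only on the fixed set $\Omega_0$. The paper merely invokes \cite{Cai2023,Lili2025} together with the cut-off technique mentioned in the introduction; a rigorous version runs as follows.

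Take a single cutoff $\chi$ with $\chi=1$ near $\partial\Omega$ and $\mathrm{supp}\,\chi\subset B_{2R_0}$. Estimate $\chi u$ by the fixed bounded-domain Navier-slip estimate of \cite{Cai2023}. Regard $(1-\chi)u$ as a field $v$ on the full ball $B_R$ satisfying $v\cdot n=0$ and $\mathrm{curl}\,v\times n=0$ on $\partial B_R$.

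On $B_1$ the homogeneous problem $\mathcal{L}v=0$ has only the trivial solution: the energy identity gives $\mathrm{curl}\,v=\mathrm{div}\,v=0$, and together with $v\cdot n=0$ this forces $v=0$. Hence $\|v\|_{W^{k+2,p}(B_1)}\le C\|\mathcal{L}v\|_{W^{k,p}(B_1)}$ with no lower-order term. Dilation then gives $\|\nabla^{k+2}v\|_{L^p(B_R)}\le C\|\mathcal{L}v\|_{W^{k,p}(B_R)}$ with $C$ independent of $R\ge 1$.

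The commutator $[\mathcal{L},\chi]u$ is supported in $\Omega_0$, so every lower-order term has the form $\|u\|_{W^{k+1,p}(\Omega_0)}$. Ehrling's inequality on $\Omega_0$ and your uniform Sobolev inequality reduce it to $\varepsilon\|\nabla^{k+2}u\|_{L^p}+C\|\nabla u\|_{L^2(\Omega_R)}$.

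A minor point: cutoffs supported in unit patches along $\partial B_R$ cannot be radial. You do not need this, since $(\nabla\chi\times u)\times n=(\partial_n\chi)u$ holds whenever $u\cdot n=0$.
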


Next, we will give the following regularity estimate  on the solutions to  Poisson equation with the specific boundary condition:
\begin{equation}\label{uyt}
\left\{
\begin{array}{lll}
\Delta J=F,\ \ &x\in \Omega_R,\\
J\cdot n=0,\quad \mbox{curl}J\times n=-\frac{1+\delta}{4}(\psi\cdot n)J,\ \ &x\in\partial \Omega,\\
J=0,\ \ &x\in\partial B_R.
\end{array}
\right.
\end{equation}
\begin{lemma}\label{l24}
Let $J$ be a smooth solution to the  problem \eqref{uyt}, then there exists a positive constant $C$ depending only on $R_0$ such that
\begin{equation}\label{lkop}
	\|\nabla^2J\|_{L^2(\Omega_R)}\leq C\|F\|_{L^2(\Omega_R)}+\big(1+\|\psi\|^2_{D_0^{1}(\Omega_R)}\big)\|J\|_{H^1(\Omega_R)}.
\end{equation}
\end{lemma}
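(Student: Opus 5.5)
The estimate in Lemma \ref{l24} is a $\div$--$\curl$ type $H^2$ estimate, so the plan is to reduce it to Lemma \ref{l22} applied to $\nabla J$-type quantities. The nonstandard boundary condition on $\partial\Omega$ enters only through a boundary term, which we control using that $\psi$ lies in $D_0^1$. Because the two boundary components carry different conditions, we localize with a cut-off. Fix $\chi\in C_c^\infty(B_{2R_0})$ with $\chi\equiv1$ on $B_{R_0+1/2}$ (so $\chi\equiv 1$ near $\partial\Omega$) and split $J=\chi J+(1-\chi)J$.

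\emph{The part near $\partial B_R$.} The function $w=(1-\chi)J$ vanishes on $\partial\Omega$ and on $\partial B_R$. It satisfies
\[
\Delta w=(1-\chi)F-2\nabla\chi\cdot\nabla J-\Delta\chi\, J,
\]
which is a Dirichlet Poisson problem. Standard $H^2$ estimates for the Dirichlet Laplacian on $\Omega_R$ (equivalently, integrating $|\Delta w|^2$ by parts twice, where the boundary curvature terms involve only the fixed sphere of radius $R$ and are bounded uniformly in $R$) give
\[
\|\nabla^2 w\|_{L^2}\le C\big(\|F\|_{L^2}+\|J\|_{H^1}\big).
\]

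\emph{The part near $\partial\Omega$.} Set $v=\chi J$, which is supported in $\Omega_0$, a fixed bounded domain. There $v\cdot n=0$ on $\partial\Omega$ and $\Delta v=\chi F+(\text{l.o.t.})$. We apply \eqref{uwkq} with $p=2$ to $\nabla v$. Since $v\cdot n=0$ but $\partial_i v\cdot n\ne0$, it is simpler to use the identity $-\Delta v=\curl\curl v-\nabla\div v$ together with the Navier-type estimate
\[
\|\nabla^2 v\|_{L^2}\le C\big(\|\div v\|_{H^1}+\|\curl v\|_{H^1}\big)+C\|v\|_{L^2}.
\]
This follows from \eqref{uwkq1} applied to $\curl v$ and to $\nabla\div v$, and the normal component of $\curl v$ is a tangential derivative of $v$, which is bounded in $H^{1/2}(\partial\Omega)$ by $\|v\|_{H^1}$ plus a small multiple of $\|\nabla^2 v\|$.

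The key term is $\|\curl v\times n\|_{H^{1/2}(\partial\Omega)}$, which enters through the tangential trace when estimating $\curl v$ in $H^1$. We obtain it either via \eqref{uwkq1}, or by testing $\Delta v$ against $\Delta v$ and integrating by parts, in which case the boundary integral $\int_{\partial\Omega}(\curl v\times n)\cdot \Delta v\,dS$ is converted using the boundary condition. In either route we must bound
\[
\big\|(\psi\cdot n)J\big\|_{H^{1/2}(\partial\Omega)}\le C\|(\psi\cdot n) J\|_{H^1(\Omega_0)}.
\]
By Hölder and Sobolev ($D_0^1\subset L^6$) we have
\[
\|\psi J\|_{L^2}\le\|\psi\|_{L^6}\|J\|_{L^3}, \qquad \|\nabla(\psi J)\|_{L^2}\le \|\nabla\psi\|_{L^2}\|J\|_{L^\infty(\Omega_0)}+\|\psi\|_{L^6}\|\nabla J\|_{L^3}.
\]
We then use Gagliardo--Nirenberg (Lemma \ref{G-N}) on $\Omega_0$:
\[
\|J\|_{L^\infty}+\|\nabla J\|_{L^3}\le C\|J\|_{H^1}^{1/2}\|J\|_{H^2}^{1/2}.
\]
Young's inequality then gives
\[
\|(\psi\cdot n)J\|_{H^1}\le \varepsilon\|\nabla^2J\|_{L^2}+C_\varepsilon\big(1+\|\psi\|_{D_0^1}^2\big)\|J\|_{H^1}.
\]
This produces exactly the quadratic power of $\|\psi\|_{D_0^1}$ appearing in \eqref{lkop}. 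The same computation handles the $\div J$ contribution: the condition $J\cdot n=0$ gives the normal part, and $\div v$ is controlled by $\Delta v$ with the standard Neumann-type trace.

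Finally we add the two pieces, absorb the $\varepsilon\|\nabla^2 J\|_{L^2(\Omega_R)}$ terms into the left side, and note that the commutator terms with $\chi$ are bounded by $\|J\|_{H^1}$. All constants depend only on $R_0$ through $\chi$ and $\Omega_0$, hence are independent of $R$.

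The main obstacle is the boundary term from $\curl J\times n$. It requires the $H^{1/2}(\partial\Omega)$ trace of the product $(\psi\cdot n)J$, with $\psi$ only in $D_0^1$. This is handled by the interpolation above, and that step is where the factor $\|\psi\|^2_{D_0^1}$ arises.
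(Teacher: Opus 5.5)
Your architecture matches the paper's proof. You localize near $\partial\Omega$; your Dirichlet treatment of $(1-\chi)J$ is a fine substitute for the truncation. You then use the $H^2$ div--curl bound $\|\nabla^2 v\|_{L^2}\le C(\|\nabla\div v\|_{L^2}+\|\nabla\curl v\|_{L^2}+\|v\|_{H^1})$ for $v\cdot n=0$, and control $\nabla\curl v$ through the tangential trace $\curl v\times n=-\frac{1+\delta}{2}(\psi\cdot n)J$. Finally you bound $\|(\psi\cdot n)J\|_{H^1(\Omega_0)}$ by H\"older, Gagliardo--Nirenberg and Young, which is where $\|\psi\|_{D_0^1}^2$ comes from.

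The two div--curl estimates are standard, but your justifications of them are wrong. The first does not follow from \eqref{uwkq1} applied to $\curl v$ and $\nabla\div v$; the latter would produce third derivatives. Your claim that $\curl v\cdot n$ is bounded in $H^{1/2}(\partial\Omega)$ by $\|v\|_{H^1}$ plus a small multiple of $\|\nabla^2 v\|_{L^2}$ is false. That quantity is a tangential derivative of the tangential trace, so it costs a full $\|v\|_{H^2}$ with an $O(1)$ constant. In a flat model, the cut-off field $v=(0,\sin(kx_1)e^{-kx_3},0)$ has $\|\curl v\cdot n\|_{H^{1/2}}^2\sim\|\nabla^2 v\|_{L^2}^2\sim k^3$, while $\|v\|_{H^1}^2\sim k$. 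So \eqref{uwkq1}, which involves the normal trace, is the wrong tool for $\curl v$. You need to quote Aramaki's $H^2$ estimate, and the tangential-trace div--curl inequality for $w=\curl v$, exactly as in \eqref{a2.161}.

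The genuine missing step is separating $\|\nabla\div v\|_{L^2}$ from $\|\curl\curl v\|_{L^2}$. The equation only gives $\curl\curl v-\nabla\div v=-\Delta v$, while the tangential-trace inequality needs $\|\curl\curl v\|_{L^2}$ by itself. The mechanism you propose, the boundary integral $\int_{\partial\Omega}(\curl v\times n)\cdot\Delta v\,dS$, cannot be bounded by $\|(\psi\cdot n)J\|_{H^{1/2}(\partial\Omega)}$ times right-hand-side quantities. Near $\partial\Omega$ one only has $\Delta v=F\in L^2$, which has no trace. Also, $J\cdot n=0$ is not what determines $\partial_n\div v$; the tangential datum $\curl J\times n$ is.

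The paper instead tests $\nabla\times\curl J-\nabla\div J=F$ with $\nabla\div J$ (see \eqref{J-1}). The cross term is then $-\int_{\partial\Omega}(\curl J\times n)\cdot\nabla\div J\,dS$. Write $\curl J\times n=-\frac{1+\delta}{2}(\psi\times J)\times n$ (using $J\cdot n=0$) and use $\div(a\times\nabla f)=\nabla f\cdot\curl a$. The divergence theorem then turns the cross term back into the volume integral $\pm\frac{1+\delta}{2}\int_{\Omega_0}\curl(\psi\times J)\cdot\nabla\div J\,dx$. This is bounded by $\|\curl(\psi\times J)\|_{L^2}\|\nabla\div J\|_{L^2}$, and then by your H\"older--GN--Young computation.

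Alternatively, in $\int_{\partial\Omega}(n\times\curl v)\cdot\nabla\div v\,dS$ you can move the tangential derivative onto the $H^{1/2}$ datum by a surface integration by parts, and pair it with the $H^{1/2}$ trace of $\div v$. Either way, this is the step that makes your ``Neumann-type'' remark rigorous and closes the estimate.
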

\begin{proof}
Using the truncation technique (see \cite{Cai2112}), we only need to consider the following equation in bounded domains $\Omega_{0}$:
\begin{equation}\label{2.5}
\left\{
\begin{array}{lll}
-\Delta J=F,~~~~~~~~~~~~~~~~~~~~~~~~~~~~~~~~~~~~~ &x\in \Omega_{0},\\
J\cdot n=0,~ \mbox{curl}J\times n=-\frac{1+\delta}{2}(\psi\cdot n)J,~~ &x\in\partial \Omega,\\
J=0,~ \nabla J=0,~~~~~~~~~~~~~~~~~~~~~~~~~~~~~~ &x\in\partial B_{2R_0}.
\end{array}
\right.
\end{equation}
First, it follows from \cite[Proposition 2.6]{Aramaki2014Lp} that
\begin{equation}\label{a2.161}
\begin{split}
&\|\nabla^2 J\|_{L^2(\Omega_{0})}\\
&\leq C\|\nabla\div J\|_{L^2(\Omega_{0})}+C\|\nabla\curl J\|_{L^2(\Omega_{0})}+C\|J\|_{H^1(\Omega_{0})}\\
&\leq C\|\nabla\div J\|_{L^2(\Omega_{0})}+C\|\nabla\times\curl J\|_{L^2(\Omega_{0})}+C\|\curl J\times n\|_{H^{1/2}(\partial\Omega_{0})}+C\|J\|_{H^1(\Omega_{0})},
\end{split}
\end{equation}
where the boundary term can be controlled as
\begin{equation}\label{2.161}
\begin{split}
 \|\mbox{curl}J\times n\|_{H^{1/2}(\partial\Omega_{0})} 
&\leq C\|\psi\cdot n J\|_{H^{1}(\Omega_{0})}\\
&\leq C\|\psi\|_{L^{6}(\Omega_{0})} \|J\|_{L^{3}(\Omega_{0})}+C\|\psi\|_{D_0^1(\Omega_{0})}(\|J\|_{L^\infty(\Omega_{0})}+\|\nabla J\|_{L^3(\Omega_{0})})\\
&\leq \epsilon\|\nabla^2 J\|_{L^2(\Omega_{0})}+\big(1+\|\psi\|^2_{D_0^{1}(\Omega_{0})}\big)\|J\|_{H^1(\Omega_{0})}.
\end{split}
\end{equation}

Next, one can rewrite the equation \eqref{2.5}$_1$ as
\begin{align}\label{equation-J}
\nabla\times\mbox{curl}J-\nabla\mbox{div}J=F.
\end{align}
Multiplying  \eqref{equation-J} by $\nabla\mbox{div}J$ and integrating the resultant equality over $\Omega_0$, it holds
\begin{align}\label{J-1}
\|\nabla\mbox{div}J\|_{L^2(\Omega_{0})}^2=\int_{\Omega_{0}}\nabla\times\mbox{curl}J\cdot\nabla\mbox{div}Jdx-\int_{\Omega_{0}}F\cdot\nabla\mbox{div}Jdx.
\end{align}
Direct calculations together with integration by parts and the boundary conditions $\eqref{2.5}_2$-$\eqref{2.5}_3$ lead to
\begin{align}\label{J-3}
\int_{\Omega_{0}}\nabla\times\mbox{curl}J\cdot\nabla\mbox{div}Jdx&=-\int_{\partial\Omega_0}(\mbox{curl}J\times n)\cdot\nabla\mbox{div}JdS\nonumber\\
&=\frac{1+\delta}{2}\int_{\partial\Omega_0}(\psi\times J\times n)\cdot\nabla\mbox{div}JdS\nonumber\\
&=\frac{1+\delta}{2}\int_{\Omega_0}\mbox{div}(\psi\times J\times \nabla\mbox{div}J)dx\nonumber\\
&\leq C\|\nabla\times(\psi\times J)\|_{L^2(\Omega_{0})}\|\nabla\mbox{div}J\|_{L^2(\Omega_{0})}\nonumber\\
&\leq \epsilon\|\nabla\mbox{div}J\|_{L^2(\Omega_{0})}^2+\big(1+\|\psi\|^4_{D_0^{1}(\Omega_{0})}\big)\|J\|_{H^1(\Omega_{0})}^2.
\end{align}

Therefore, putting  \eqref{J-3} into \eqref{J-1} and choosing $\epsilon$ small enough, we conclude that
\begin{align}\label{div}
\|\nabla\mbox{div}J\|_{L^2(\Omega_{0})}\leq C\|F\|_{H^1(\Omega_{0})}+C\big(1+\|\psi\|^2_{D_0^{1}(\Omega_{0})}\big)\|J\|_{H^1(\Omega_{0})},
\end{align}
and thus
\begin{align}\label{curl}
\|\nabla\times\mbox{curl}J\|_{L^2(\Omega_{0})}= \|\nabla\mbox{div}J+F\|_{L^2(\Omega_{0})}\leq C\|F\|_{H^1(\Omega_{0})}+C\big(1+\|\psi\|^2_{D_0^{1}(\Omega_{0})}\big)\|J\|_{H^1(\Omega_{0})}.
\end{align} 
This combined with \eqref{a2.161}, \eqref{2.161}, and  \eqref{div} immediately implies the desired estimate \eqref{lkop} and completes the proof of Lemma \ref{l24}.
\end{proof}

Finally, we need the following lemma to establish the positive lower bound of the density near the boundary, whose proof is established in \cite{lly26ns}.

\begin{lemma}\label{087}
Assume that $U\subset\mathbb{R}^3$ is a bounded smooth domain satisfying the interior sphere condition, if $\rho\in C(\overline U)$ and 
$\nabla\rho^{-a}\in L^p(U)$ with  $a>0,\ p>2$, then $\rho$ has a positive lower bound  on $\overline U$.
\end{lemma}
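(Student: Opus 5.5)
We argue by contradiction, working with $f\triangleq\rho^{-a}$. Since $\rho\in C(\overline U)$ and $\overline U$ is compact, $\rho$ attains its minimum at some $x_0\in\overline U$. If $\rho(x_0)>0$ we are done, so assume $\rho(x_0)=0$. Continuity of $\rho$ then gives $f(x)\to+\infty$ as $x\to x_0$. On the other hand, $f$ is finite (indeed bounded) on any compact set where $\rho$ stays away from zero, and $\nabla f\in L^p(U)$. The goal is to show that an $L^p$ gradient cannot produce this blow-up at a single point, which contradicts $\rho(x_0)=0$.

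The first step is the local geometry. Because $U$ is smooth and satisfies the interior sphere condition, there is a ball $B\subset U$ with $x_0\in\partial B$ when $x_0\in\partial U$, or a ball centered at $x_0$ when $x_0\in U$. Inside $B$ we fix a truncated cone $\mathcal C$ with vertex $x_0$, height $r_0$ and fixed aperture, together with a small ball $B'\subset\mathcal C$ at distance $\sim r_0$ from $x_0$. On $B'$, $\rho\ge c>0$, so $f$ is bounded there by a constant $M$. For $x\in\mathcal C$ close to $x_0$ we integrate $\nabla f$ along the segments joining $x$ to the points of $B'$ and average over $B'$. The standard Morrey/cone argument yields
\[
f(x)\le M+C\int_{\mathcal C}\frac{|\nabla f(y)|}{|x-y|^{2}}\,dy .
\]
This bound needs only the cone, not any boundary regularity beyond the interior sphere condition.

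The second step, which I expect to be the main obstacle, is to bound the right-hand side uniformly in $x$. Hölder's inequality gives
\[
\int_{\mathcal C}|\nabla f|\,|x-y|^{-2}\,dy\le\|\nabla f\|_{L^p}\,\big\||x-\cdot|^{-2}\big\|_{L^{p'}(\mathcal C)},
\]
and the last factor is finite uniformly in $x$ exactly when $2p'<3$, that is $p>3$. For $2<p\le3$ the kernel bound alone is insufficient, and the argument has to use that $f$ is a negative power of a continuous function.

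What I would try first for $2<p\le3$ is the following. Take the level sets $\{f>k\}$, equivalently $\{\rho<k^{-1/a}\}$, which shrink to a neighbourhood of the zero set of $\rho$. Apply a De Giorgi-type truncation to $(f-k)_+$ on shrinking cones: a Poincaré inequality on the cone sections combined with $\|\nabla(f-k)_+\|_{L^p}\to0$ should make the averages of $(f-k)_+$ decay fast enough that $f$ stays bounded along $\mathcal C$, using $p>2$ through the capacity of thin sets. If that fails, I would fall back to what the application needs. Lemma \ref{087} is invoked with $U=\Omega_0$, $\rho^{-a}=\rho^{(\delta-1)/2}$ and $\nabla\rho^{(\delta-1)/2}\in D_0^1\subset L^6$, so $p=6>3$, and in that case the two steps above close directly. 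The conclusion is that $f$ is bounded near $x_0$, contradicting $\rho(x_0)=0$, and hence $\min_{\overline U}\rho>0$.
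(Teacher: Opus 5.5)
Your Steps 1--2 are sound for $p>3$, and you locate the threshold correctly: $|x-\cdot|^{-2}\in L^{p'}$ uniformly only when $p>3$. The genuine gap is the range $2<p\le 3$. No De Giorgi or capacity argument can close it, because the lemma as stated is false there. For $p\in(2,3)$, take any $x_0\in\overline U$ (for instance $x_0\in\partial U$, so that $\rho^{-a}$ is even smooth in $U$) and set $\rho(x)=|x-x_0|^{s}$ with $0<as<\frac{3}{p}-1$. Then $\rho\in C(\overline U)$, $\rho(x_0)=0$, and
\[
|\nabla\rho^{-a}|=as\,|x-x_0|^{-as-1}\in L^{p}(U)\quad\text{because } p(as+1)<3 .
\]
For $p=3$, use $\rho^{-a}=\log\log\frac{K}{|x-x_0|}$ with $K$ large.

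Both heuristics you planned to rely on fail. Being a negative power of a continuous function adds no structure: every $f$ that is continuous with values in $(0,\infty]$ equals $\rho^{-a}$ with $\rho=f^{-1/a}\in C(\overline U)$. The capacity count also goes the wrong way. In $\mathbb{R}^3$, $p>2$ gives curves positive $p$-capacity, but a single point has zero $p$-capacity for every $p\le 3$. So a function with $L^p$ gradient can blow up at an isolated zero of $\rho$; $p>2$ is the right threshold only in two dimensions.

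The statement must therefore be read with $p>3$, and then your fallback is the complete proof. It also covers every use of the lemma in the paper, where $\nabla\rho^{\frac{\delta-1}{2}}\in D_0^1\subset L^6$ gives $p=6$.

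For $p>3$ you can skip the cone construction. Interpret the hypothesis as $f=\rho^{-a}\in L^1_{loc}(U)$ with distributional gradient in $L^p(U)$. On the bounded Lipschitz domain $U$ this gives $f\in W^{1,p}(U)$, and Morrey's embedding $W^{1,p}(U)\hookrightarrow C(\overline U)$ bounds $f$ a.e. by some $M'$. The interior sphere condition is not needed.

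If you keep the hands-on version, justify two points you glossed over:
\begin{itemize}
\item \emph{The ball $B'$ with $\rho\ge c>0$.} It exists because $f\in L^1_{loc}$ forces $\rho>0$ a.e., so $\{\rho>0\}$ is a dense open set. Alternatively, replace $M$ by the average of $f$ over $B'$.
\item \emph{The contradiction.} Your bound holds only a.e. The contradiction comes from the fact that $\rho^{-a}$ is continuous into $(0,\infty]$ with $\rho^{-a}(x_0)=\infty$, so it exceeds $M'+1$ on a neighbourhood of $x_0$ in $U$, which is a set of positive measure.
\end{itemize}
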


\section{A priori estimate}
In this section, for $1\leq p\leq\infty$ and positive integer $k$, we denote
\begin{equation*}
	\int f \mathrm{dx}=\int_{\Omega_R} f \mathrm{dx},\ L^p=L^p(\Omega_R),\ W^{k,p} = W^{k,p}(\Omega_R),\ D^{k}=D^{k,2}(\Omega_R),\ H^k=H^k(\Omega_R).
\end{equation*}

We start with the following local well-posedness result which can be proved in the same way as \cite{3,4,5}, where the initial density is strictly away from vacuum.
\begin{lemma}\label{l25}
Assume that the initial data $(\rho_0,u_0,H_0,J_0)$ satisfies
\begin{equation}\label{2.16}
\begin{cases}
\rho_0,\ u_0,\ H_0\in H^3(\Omega_R),\ J_0\in H^2(\Omega_R),\ \div H_0=0,\ \inf_{x\in \Omega_R}\rho_0(x)>0,\\
(u_0\cdot n, H_0\cdot n, J_0\cdot n)=(0,0,0),\ x\in \partial \Omega,\\
(\mathrm{curl}u_0\times n, \curl H_0\times n, \curl J_0\times n)=\left(-Au_0,0,-\frac{1+\delta}{2}(\psi_0\cdot n)J_0\right),\ x\in \partial \Omega,\\
(u_0\cdot n, H_0, J_0)=(0,0,0),\quad \mathrm{curl}u\times n=0,\ x\in \partial B_R.
\end{cases}
\end{equation}
Then there exist a small time $T_R$ and a unique classical solution $(\rho, u, H, J)$ to the following IBVP
\begin{equation}\label{9}
\left\{ \begin{array}{l}
\rho_t+\mathrm{div}(\rho u)=0,\\
\rho^{1-\delta}u_t+\rho^{1-\delta}u\cdot\nabla u-\mathcal{L}u+\frac{a\gamma}{\gamma-\delta}\nabla\rho^{\gamma-\delta}=\delta\psi\cdot \mathcal{S}(u)+ \rho^\frac{1-\delta}{2}J\cdot\nabla H-\rho^\frac{1-\delta}{2}\nabla H\cdot J,\\
H_{t}- \eta \Delta H=\curl(u\times H),\quad \div H=0,\\
J_t+u\cdot\nabla J-\eta\Delta J=J\cdot\nabla u+\frac{\delta-1}{2}J\div u+\frac{1+\delta}{2}\eta J\div\psi+\frac{(1+\delta)^2}{4}\eta J\psi^2\\
\quad+(1+\delta)\eta\psi\cdot\nabla J,\\
(u\cdot n, H\cdot n, J\cdot n)=(0,0,0),\ x\in \partial \Omega,\\
(\mathrm{curl}u\times n, \curl H\times n, \curl J\times n)=\left(-Au,0,-\frac{1+\delta}{2}(\psi\cdot n)J\right),\ x\in \partial \Omega,\\
(u\cdot n, H, J)=(0,0,0),\quad \mathrm{curl}u\times n=0,\ x\in \partial B_R,\\
(\rho, u, H, J)(x, 0) = (\rho_0, u_0, H_0, J_0)(x),\ x\in \Omega_R,
\end{array} \right.
\end{equation}
on $\Omega_R\times(0,T_R)$ such that
\begin{equation}
\left\{ \begin{array}{l}
\rho\in C([0,T_R]; H^3),\ \nabla\log\rho\in C([0,T_R]; H^2),\\
u,\ H\in C([0,T_R]; H^3)\cap L^2(0, T_R; D^1\cap D^4),\\
J\in C([0,T_R]; H^2)\cap L^2(0, T_R; D^1\cap D^3),\\
u_t,\ H_t \in L^\infty(0, T_R; H_0^1)\cap L^2(0, T_R; H^2),\ J_t \in L^\infty(0, T_R; L^2)\cap L^2(0, T_R; H^1),\\ \rho^\frac{1-\delta}{2}u_{tt},\ H_{tt} \in L^2(0, T_R; L^2),\\
\sqrt tu,\ \sqrt tH\in L^\infty(0, T_R; D^4),\ \sqrt tJ\in L^\infty(0, T_R; D^3),\\
\sqrt tu_t,\ \sqrt tH_t \in L^\infty(0, T_R; D^2), \ \sqrt tJ_t \in L^\infty(0, T_R; D^1),\\
\sqrt tu_{tt},\ \sqrt tH_{tt} \in L^\infty(0, T_R; L^2)\cap L^2(0, T_R; H^1),\ \sqrt tJ_{tt} \in L^2(0, T_R; L^2),\\
tu_t,\ tH_t \in L^\infty(0, T_R; D^3),\ tJ_t \in L^\infty(0, T_R; D^2),\\
tu_{tt},\ tH_{tt} \in L^\infty(0, T_R; D^1) \cap L^2(0, T_R; D^2),\ tJ_{tt} \in L^\infty(0, T_R; L^2) \cap L^2(0, T_R; D^1),\\
t u_{ttt},\ t H_{ttt} \in L^2(0, T_R; L^2),\\
t^{3/2}u_{tt},\ t^{3/2}H_{tt} \in L^\infty(0, T_R; D^2),\ t^{3/2}J_{tt} \in L^\infty(0, T_R; D^1),\\
t^{3/2}u_{ttt},\ t^{3/2}H_{ttt} \in L^\infty(0, T_R; L^2)\cap L^2(0, T_R; H_0^1),\ t^{3/2}J_{ttt} \in L^2(0, T_R; L^2).
\end{array} \right.
\end{equation}
\end{lemma}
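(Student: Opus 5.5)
The plan is to treat Lemma \ref{l25} as a classical local well-posedness statement for a non-degenerate parabolic--hyperbolic system on the bounded domain $\Omega_R$. It follows the linearization--iteration scheme of \cite{3,4,5}; the key point is that $\inf\rho_0>0$ removes all degeneracy.

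\textbf{Reduction to the original system.} I would not treat \eqref{9} as a system in four independent unknowns. Instead I would solve the IBVP for $(\rho,u,H)$ alone: the continuity equation, the momentum equation in the equivalent form $(\rho u)_t+\div(\rho u\otimes u)+\nabla P=\div\mathbb{T}+H\cdot\nabla H-\nabla H\cdot H$, and the induction equation. These carry the Navier-slip conditions on $\partial\Omega$, $u\cdot n=0$ with $\curl u\times n=0$ on $\partial B_R$, and the perfect-conductor condition for $H$ on $\partial\Omega$ together with $H=0$ on $\partial B_R$. I would then \emph{define} $J\triangleq\rho^{-\frac{1+\delta}{2}}H$. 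Since $\rho$ stays bounded below on $[0,T_R]$, the computations leading to \eqref{xq1q2} and \eqref{bdy-J} show that $J$ satisfies \eqref{9}$_4$ with the stated boundary conditions. The constraint $\div H=0$ is propagated as usual, because $\div H$ solves a heat equation with zero data. The regularity of $J$ is inherited from that of $\rho$ and $H$, using $\nabla\log\rho\in C([0,T_R];H^2)$. This also settles uniqueness of $J$ once $(\rho,u,H)$ is unique.

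\textbf{Existence for $(\rho,u,H)$.} Here I would use the standard iteration. Given $(v,B)$ in the target class with $v\cdot n=0$, I would solve three linear problems. First, the linear transport equation $\rho_t+v\cdot\nabla\rho+\rho\div v=0$, by characteristics. Since $v\cdot n=0$, no inflow boundary condition is needed, and one gets $\rho\in C([0,T];H^3)$ together with a lower bound $\rho\ge\frac12\inf\rho_0$ for small $T$. Second, the linear parabolic Lamé system $\rho^{1-\delta}u_t-\mathcal{L}u=\ldots$ with frozen coefficients, solved by Galerkin together with the elliptic estimates of Lemma \ref{elliptic_estimate}. Third, the linear heat equation for $H$ with source $\curl(v\times B)$.

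Next I would derive uniform high-order bounds on a short time interval. These include the time-weighted estimates $\sqrt t,\,t,\,t^{3/2}$ for $u_{tt},u_{ttt}$ and the corresponding ones for $H$, obtained by differentiating in time. To handle the boundary terms I would combine the div--curl estimates of Lemma \ref{l22} with the Navier-slip structure, for instance $\int_{\partial\Omega}(\curl u\times n)\cdot u_t=-\int_{\partial\Omega}Au\cdot u_t$. Finally I would show that the map is a contraction in a low norm, using the $L^2$ energy of the differences.

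\textbf{Compatibility.} The compatibility conditions \eqref{2.16} at $t=0$ are what make $u_t(0),H_t(0)\in H^1$ well defined. That in turn gives the $L^\infty_tH^1_0$ bounds for $u_t$ and $H_t$.

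\textbf{Main obstacle.} I expect the main difficulty to be the bookkeeping of the highest-order, time-weighted regularity near the multiply connected boundary $\partial\Omega\cup\partial B_R$. This requires elliptic regularity for the Lamé and Laplace operators with the mixed slip/Dirichlet conditions. My first attempt would be to localize with cut-off functions, as in \cite{Cai2112}, and then apply Lemma \ref{elliptic_estimate} on each piece. Since every coefficient is smooth and non-degenerate here, I would follow \cite{3,4,5} essentially verbatim and give only the modifications coming from the magnetic coupling.
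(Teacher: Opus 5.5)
The paper gives no argument for this lemma beyond saying it is proved as in \cite{3,4,5}. Your plan (a non-degenerate iteration on $\Omega_R$, then recovering $J$ as $\rho^{-\frac{1+\delta}{2}}H$ instead of solving for it) is in that spirit. However, the reduction rests on $\div H=0$, and your justification of that step fails for the boundary conditions prescribed in \eqref{9}.

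\textbf{Propagation of $\div H=0$.} Write \eqref{9}$_3$ as $H_t+\eta\curl\curl H-\eta\nabla\div H=\curl(u\times H)$. Then $w=\div H$ solves $w_t-\eta\Delta w=0$ with $w(0)=0$.
\begin{itemize}
\item On $\partial\Omega$ you do get $\partial_n w=0$. Indeed $H_t\cdot n$, $n\cdot\curl\curl H$ and $n\cdot\curl(u\times H)$ all vanish there, by $H\cdot n=0$, $(\curl H)_{\mathrm{tan}}=0$ and $(u\times H)_{\mathrm{tan}}=0$.
\item On $\partial B_R$, $H=0$ gives $H_t=0$ and $\curl(u\times H)=u\,\div H$, whose normal component vanishes. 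So the equation only yields $n\cdot\Delta H=0$.
\item Hence on $\partial B_R$ one has $\partial_n w=n\cdot\curl\curl H$. Up to sign this is the surface divergence of $(\partial_nH)_{\mathrm{tan}}$, which is not determined by $w$. The energy identity for $w$ therefore leaves the uncontrolled term $\eta\int_{\partial B_R}w\,\partial_nw\,dS$.
\end{itemize}
This is not a technicality: Dirichlet heat flow does not preserve solenoidality. In a half-space with $u\equiv0$ one finds $\div H(t)=(e^{t\Delta_N}-e^{t\Delta_D})\partial_3H_0^3$. This is nonzero for generic solenoidal $H_0$ vanishing on the boundary.

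Once $\div H\neq0$, $\curl(u\times H)$ contains $u\,\div H$. So $\rho^{-\frac{1+\delta}{2}}H$ satisfies \eqref{9}$_4$ only up to an extra term $\rho^{-\frac{1+\delta}{2}}u\,\div H$, and $\div H=0$ in \eqref{9}$_3$ is violated. In other words, \eqref{9} is over-determined on $\partial B_R$. The paper's one-line proof passes over this, but your sketch explicitly asserts the false step.

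To repair it you must change the approximate problem. One option is to impose $H\cdot n=0$, $\curl H\times n=0$ on $\partial B_R$ as well. Then $\partial_n\div H=0$ on all of $\partial\Omega_R$ and your argument closes. The cost is that $J=0$ on $\partial B_R$ becomes the Robin-type condition \eqref{bdy-J} there. That matters later, since Section 3 uses $J=0$ on $\partial B_R$ to discard boundary integrals where $\rho$ has no $R$-uniform lower bound. The other option is to drop $\div H=0$ and carry the $u\,\div H$ terms throughout.

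\textbf{Compatibility conditions.} \eqref{2.16} only says the data satisfy the boundary conditions. That $u_t(0),H_t(0)\in H^1$ is automatic from $H^3$ data and $\inf\rho_0>0$. What $u,H\in C([0,T_R];H^3)$ and $u_t,H_t\in L^\infty(0,T_R;H^1)$ actually need are first-order compatibility conditions. That is, $u_t(0)$ and $H_t(0)$, computed from the equations at $t=0$, must themselves satisfy the boundary conditions. For example, you need $H_t(0)=\eta\Delta H_0=0$ on $\partial B_R$ and $u_t(0)\cdot n=0$ on $\partial\Omega_R$. Without them $\|H_t(t)\|_{H^1}$ cannot stay bounded as $t\to0$, since weak $H^1$-limits preserve traces. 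This is exactly what the compatibility hypothesis on $g$ encodes in \cite{3,4,5}. You should add such conditions rather than attribute the $L^\infty_tH^1$ bounds to \eqref{2.16}. Note also that under slip conditions $u_t$ only satisfies $u_t\cdot n=0$, so $u_t\in H^1_0$ is not available.

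\textbf{Uniqueness of $J$.} Because $J$ feeds back into \eqref{9}$_2$, uniqueness for \eqref{9} needs the observation that every solution has $J=\rho^{-\frac{1+\delta}{2}}H$. This is uniqueness for the linear problem \eqref{9}$_4$ with $(\rho,u)$ given. It follows from an energy estimate with the boundary term handled as in \eqref{1qaqa4}; it is easy, but it should be stated.
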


In the rest of this section, we always assume that $ (\rho, u, H, J)$ is a solution  to the  IBVP \eqref{9} in $\Omega_R \times [0, T_R]$, which is obtained by
Lemma \ref{l25}.
The main aim of this section is to derive the following key a priori estimate on $\mathcal{E}$ defined by
\begin{equation}
	\begin{split}\label{qaqa}
		\mathcal{E}(t)\triangleq&1+\|\rho^\frac{1-\delta}{2}u\|_{L^2}
		+\|\nabla u\|_{L^2}+\|H\|_{H^1}+\|J\|_{H^1}+\|\rho^\frac{1-\delta}{2}u_t\|_{L^2}\\
		&+\|H_t\|_{L^2}+\|\rho^{\gamma-\frac{1+\delta}{2}}\|_{{W^{1,6}\cap D^1\cap D^2}}
		+\|\nabla\rho^\frac{\delta-1}{2}\|_{L^{6}\cap D^{1}}+\|\rho^\frac{\delta-1}{2}\|_{L^{6}(\Omega_0)}.
	\end{split}
\end{equation}
\begin{proposition}\label{p1}
	Assume that $(\rho_0, u_0, H_0, J_0)$ satisfies \eqref{2.16}. Let $(\rho,u,H,J)$ be the solution to the IBVP \eqref{9} in $\Omega_R\times(0,T_R]$ obtained by Lemma \ref{l25}. Then there exist positive constants $T_0$ and $M$ both depending only on $a$, $\delta$, $\gamma$, $\mu$, $\lambda$, $\eta$, $A$, $R_0$, and $C_0$ such that
    \begin{equation}\label{e32}
\begin{split}
			\sup_{0\leq t\leq T_0}\big(\mathcal{E}(t)+\|\nabla^2u\|_{L^2}+\|\nabla^2H\|_{L^2}\big)&+\int_0^{T_0}\Big(\|\rho^\frac{\delta-1}{2}\mathcal{L}u\|_{H^1}^2
            +\|\nabla u_t\|_{L^2}^2+\|\nabla^3 u\|_{L^2}^2\\
            &\quad+\|\nabla H_t\|_{L^2}^2+\|\nabla^3 H\|_{L^2}^2+\|J_t\|_{L^2}^2+\|\nabla^2 J\|_{L^2}^2\Big) ds
			\leq M,
\end{split}
	\end{equation}
    where
    \begin{equation}
    	\begin{split}
    	C_0\triangleq &1+\|\rho_0^\frac{1-\delta}{2}u_0\|_{L^2}+\|\nabla u_0\|_{H^1}+\|H_0\|_{H^2}+\|J_0\|_{H^1}
    		+\|\rho_0^\frac{\delta-1}{2}\mathcal{L}u_0\|_{L^2}\\
    		&+\|\rho_0^{\gamma-\frac{1+\delta}{2}}\|_{{W^{1,6}\cap D^1\cap D^2}}+\|\nabla\rho_0^\frac{\delta-1}{2}\|_{L^{6}\cap D^{1}}+\|\rho_0^\frac{\delta-1}{2}\|_{L^{6}(\Omega_0)} 
    	\end{split}
    \end{equation}
    is bounded constant due to \eqref{1117}, \eqref{118}, and \eqref{260114}.
\end{proposition}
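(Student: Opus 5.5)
We argue by a continuity (bootstrap) argument on $\mathcal{E}$. Assume that on $[0,T]$ with $T\le T_R$ one has $\sup_{t\le T}\mathcal{E}(t)\le 2M$ for a large $M$ (depending on $C_0$) to be fixed. We then show that, for $T\le T_0$ small, the left side of \eqref{e32} is bounded by $\frac{M}{2}+C(M)T^{\theta}$ for some $\theta>0$; choosing $T_0$ small closes the argument. The ingredient used throughout is a lower bound on the density near the boundary. Since $\|\rho^{\frac{\delta-1}{2}}\|_{L^6(\Omega_0)}$ and $\|\nabla\rho^{\frac{\delta-1}{2}}\|_{D^1}$ are controlled by $\mathcal E$, we have $\rho^{\frac{\delta-1}{2}}\in W^{1,6}(\Omega_0)\subset L^\infty(\Omega_0)$. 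Hence $\rho\ge c(M)>0$ on $\Omega_0$, in the spirit of Lemma \ref{087}, and so $\psi=\frac{2}{\delta-1}\rho^{\frac{1-\delta}{2}}\nabla\rho^{\frac{\delta-1}{2}}$ is bounded in $L^6\cap D^1$ near $\partial\Omega$. This is what makes the boundary condition for $J$ and Lemma \ref{l24} usable.

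\textbf{Estimates for $(u,\rho)$.} These are taken over essentially from \cite{lly26ns}, now with the additional forcing $\rho^{\frac{1-\delta}{2}}(J\cdot\nabla H-\nabla H\cdot J)$. This forcing is harmless because it is bounded in $L^2$ by $\|J\|_{L^3}\|\nabla H\|_{L^6}$, and its time derivative is bounded by $J_t,\nabla H_t$. The estimates proceed in this order.
\begin{itemize}
\item Energy estimates for $\rho^{\frac{1-\delta}{2}}u$.
\item Estimates for $\nabla u$, obtained by testing with $u_t$ and handling the Navier-slip boundary term through $A$.
\item Estimates for $\rho^{\frac{1-\delta}{2}}u_t$, obtained by differentiating in time and testing with $u_t$, with initial value controlled through the compatibility term $\rho_0^{\frac{\delta-1}{2}}\mathcal{L}u_0$.
\item Elliptic estimates via Lemma \ref{elliptic_estimate} applied to $\mathcal{L}u=\cdots$, giving $\nabla^2u$, $\nabla^3u$ and $\rho^{\frac{\delta-1}{2}}\mathcal Lu\in L^2H^1$.
\end{itemize}
The transport equations for $\rho^{\gamma-\frac{1+\delta}{2}}$ and $\nabla\rho^{\frac{\delta-1}{2}}$ are then handled in $W^{1,6}\cap D^2$ and $D^1$ respectively. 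The key structure is that $\nabla^2\rho^{\frac{\delta-1}{2}}$ is driven by $\rho^{\frac{\delta-1}{2}}\nabla^2 u$, which is recovered from $\rho^{\frac{\delta-1}{2}}\mathcal{L}u$ and $\rho^{\frac{\delta-1}{2}}\curl\curl u$ via \eqref{zsa1}--\eqref{zsa2} and Lemma \ref{l22} with cut-offs. Gronwall then gives bounds of the form $C_0+C(M)T^{1/2}$. The lower bound $\|\rho^{\frac{\delta-1}{2}}\|_{L^6(\Omega_0)}$ follows from the transport equation, since $\|\div u\|_{L^\infty}\in L^1_t$.

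\textbf{Estimates for $H$ and $J$.} For $H$ we use standard parabolic estimates: test \eqref{9}$_3$ with $H$, $\Delta H$ and $H_t$, then differentiate in $t$. The perfect-conducting boundary condition kills the boundary terms, giving $H\in L^\infty H^2$ and $\nabla H_t,\nabla^3H\in L^2L^2$. For $J$ we test \eqref{9}$_4$ with $J$ and with $J_t$ (equivalently $-\Delta J$). The bad interior terms are $J\div\psi$, $J|\psi|^2$ and $\psi\cdot\nabla J$. Since $\nabla\rho^{\frac{\delta-1}{2}}$ controls only $\rho^{\frac{\delta-1}{2}}\psi$ and $\rho^{-\frac{1+\delta}{2}}\cdot(\rho^{\frac{1+\delta}{2}}J)$ appears, we rewrite $J\psi=c\,\rho^{-\frac{1+\delta}{2}}H\cdot\rho^{\frac{\delta-1}{2}}\cdots$. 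More simply, we use $J\div\psi=\rho^{\frac{1-\delta}{2}}J\cdot(\rho^{\frac{\delta-1}{2}}\div\psi)$ and note that $\rho^{\frac{\delta-1}{2}}\div\psi$ is controlled by $\nabla^2\rho^{\frac{\delta-1}{2}}$ plus $|\nabla\rho^{\frac{\delta-1}{2}}|^2\rho^{\frac{1-\delta}{2}}$, with $\rho\le C$. Then $\|\nabla^2J\|_{L^2}$ is closed through Lemma \ref{l24}.

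\textbf{Main obstacle.} The main obstacle is the boundary term \eqref{pmy1} in the $J_t$ estimate, since $\nabla J_t$ is not available. Following the introduction, we substitute
$$J_t=\rho^{-\frac{1+\delta}{2}}H_t+\tfrac{1+\delta}{2}\rho^{-\frac{1+\delta}{2}}(u\cdot\psi+\div u)H$$
on $\partial\Omega$. We then use the trace theorem in $\Omega_0$ together with $\rho\ge c$ there, $\nabla H_t\in L^2$, and $\psi\in H^1(\Omega_0)$. The result is a bound $\epsilon\|\nabla H_t\|^2_{L^2}+C(M)$ that is integrable in time, after which Gronwall closes all estimates and yields \eqref{e32}.
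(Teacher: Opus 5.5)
Your proposal is correct and follows essentially the paper's route. The shared ingredients are the density lower bound \eqref{rho_lb} on $\Omega_0$, the same handling of $J\,\mathrm{div}\psi$, $J|\psi|^2$ and $\psi\cdot\nabla J$, the substitution of $J_t$ by $\rho^{-\frac{1+\delta}{2}}H_t$ plus lower-order terms in the boundary term \eqref{pmy1}, and the recovery of $\rho^{\frac{\delta-1}{2}}\nabla\mathrm{div}u$ from \eqref{zsa1}--\eqref{zsa2}; your bootstrap on $\mathcal{E}$ is just another form of the paper's closing inequality $\mathcal{E}^2(t)\le C\exp\big(C\int_0^t\mathcal{E}^\beta ds\big)$, and testing the momentum equation with $u_t$ instead of $-2\rho^{\delta-1}\mathcal{L}u$ is an immaterial local variation. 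One small fix: the $L^6(\Omega_0)$ bound on $\rho^{\frac{\delta-1}{2}}$ needs more than $\mathrm{div}u\in L^1_tL^\infty$, because $\Omega_0$ is not invariant under the flow, so you must also bound the transport term by $\|u\|_{L^\infty}\|\nabla\rho^{\frac{\delta-1}{2}}\|_{L^6}$ as in \eqref{popza}.
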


To prove Proposition 3.1,  whose proof will be postponed to the end of this section, we will establish some necessary a priori estimates in  Lemmas \ref{L3.1}-\ref{L3.6}.
we begin with the following standard energy estimate for $(\rho,u,H,J)$.
\begin{lemma}\label{L3.1}
Under the conditions of Proposition \ref{p1}, let $(\rho,u,H,J)$ be a smooth solution to the IBVP \eqref{9}.
Then there exist a $T_1=T_1(C_0)>0$ and a positive constant $\beta=\beta(\delta,\gamma)>1$ such that for all $t\in (0,T_1]$,
\begin{equation}\label{e34}
\begin{split}
&\sup_{0\leq s\leq t}\left(\|\rho^\frac{1-\delta}{2}u\|_{L^2}^2+\|H\|_{L^2}^2+\|J\|_{L^2}^2\right)+\int_0^t\left(\|\nabla u\|_{L^2}^2+\|\nabla H\|_{L^2}^2+\|\nabla J\|_{L^2}^2\right)ds\\
&\leq CC_0^\beta+C\int_0^t\mathcal{E}^\beta ds,
\end{split}
\end{equation}
where (and in what follows) $C$ denotes a generic positive constant depending only on $a$, $\delta$, $\gamma$, $\mu$, $\lambda$, $\eta$, $A$, $R_0$, and $C_0$.
\end{lemma}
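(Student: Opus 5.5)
We will obtain \eqref{e34} by three separate $L^2$ energy identities: one for the momentum equation \eqref{9}$_2$, one for the induction equation \eqref{9}$_3$, and one for the $J$-equation \eqref{9}$_4$. Each right-hand side is bounded by $C\mathcal{E}^\beta$ plus absorbable terms, and we then integrate in time.

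\emph{Momentum estimate.} Multiply \eqref{9}$_2$ by $u$ and integrate over $\Omega_R$. Using \eqref{9}$_1$ in the form $(\rho^{1-\delta})_t+\div(\rho^{1-\delta}u)=\delta\rho^{1-\delta}\div u$, we get
\[
\frac12\frac{d}{dt}\|\rho^{\frac{1-\delta}{2}}u\|_{L^2}^2-\int\mathcal{L}u\cdot u\,dx=\frac{\delta}{2}\int\rho^{1-\delta}|u|^2\div u\,dx+\cdots .
\]
Write $\mathcal{L}u=(2\mu+\lambda)\nabla\div u-\mu\nabla\times\curl u$ and integrate by parts with the Navier-slip conditions. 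This gives
\[
-\int\mathcal{L}u\cdot u=(2\mu+\lambda)\|\div u\|_{L^2}^2+\mu\|\curl u\|_{L^2}^2+\mu\int_{\partial\Omega}Au\cdot u\,dS .
\]
The boundary term is bounded by $\epsilon\|\nabla u\|_{L^2}^2+C\|u\|_{L^2(\Omega_0)}^2$ via the trace theorem. Lemma \ref{l22} then yields the lower bound $c\|\nabla u\|_{L^2}^2-C\|u\|_{L^2(\Omega_0)}^2$. The remaining terms are all controlled by $\mathcal{E}$:
\begin{itemize}
\item $\int\rho^{1-\delta}|u|^2\div u\le C\|\rho^{\frac{1-\delta}{2}}\|_{L^\infty}^2\|u\|_{L^6}^2\|\nabla u\|_{L^{3/2}\dots}$, or more simply $\le C\mathcal{E}^\beta$, since $\|\nabla u\|_{L^2}$ and $\|\nabla^2u\|_{L^2}$ are $\le C\mathcal{E}^\beta$ by elliptic estimates.
\item The pressure term $\nabla\rho^{\gamma-\delta}=\rho^{\frac{1-\delta}{2}}\nabla\rho^{\gamma-\frac{1+\delta}{2}}\cdot c$ is bounded using $\rho^{\frac{1-\delta}{2}}\in L^\infty$, which follows from $\rho^{\gamma-\frac{1+\delta}{2}}\in W^{1,6}$.
\item $\delta\psi\cdot\mathcal{S}(u)\cdot u$ is handled by writing $\psi=\frac{2}{\delta-1}\rho^{\frac{1-\delta}{2}}\nabla\rho^{\frac{\delta-1}{2}}$, so it is $\le C\|\nabla\rho^{\frac{\delta-1}{2}}\|_{L^6}\|\nabla u\|_{L^2}\|u\|_{L^3}$.
\item The magnetic terms $\rho^{\frac{1-\delta}{2}}(J\cdot\nabla H-\nabla H\cdot J)\cdot u$ are bounded by $\|J\|_{L^3}\|\nabla H\|_{L^2}\|u\|_{L^6}$.
\end{itemize}
Since $\|u\|_{L^6}\le C\|\nabla u\|_{L^2}$ (as $u\cdot n=0$), every one of these is $\le \epsilon\|\nabla u\|_{L^2}^2+C\mathcal{E}^\beta$. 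For the lower-order term $\|u\|_{L^2(\Omega_0)}\le C\|u\|_{L^6}$, we simply bound it by $C\mathcal{E}^\beta$.

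\emph{Induction estimate.} Multiply \eqref{9}$_3$ by $H$. Since $\div H=0$, we have $-\Delta H=\nabla\times\curl H$. With the boundary conditions $\curl H\times n=0$ on $\partial\Omega$ and $H=0$ on $\partial B_R$, this gives $\eta\|\curl H\|_{L^2}^2$, and Lemma \ref{l22} upgrades it to $\|\nabla H\|_{L^2}^2$ up to $\|H\|_{L^2}^2$. For the source term, integrate by parts as
\[
\int \curl(u\times H)\cdot H=\int (u\times H)\cdot\curl H ,
\]
where the boundary term vanishes by the perfect conducting conditions. This is bounded by $\|u\|_{L^6}\|H\|_{L^3}\|\nabla H\|_{L^2}\le C\mathcal{E}^\beta$.

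\emph{$J$ estimate.} Multiply \eqref{9}$_4$ by $J$ and write $-\Delta J=\nabla\times\curl J-\nabla\div J$. This produces $\eta(\|\curl J\|_{L^2}^2+\|\div J\|_{L^2}^2)$ together with the boundary term
\[
\eta\int_{\partial\Omega}(\curl J\times n)\cdot J\,dS=-\frac{1+\delta}{2}\eta\int_{\partial\Omega}(\psi\cdot n)|J|^2dS .
\]
We bound this boundary term by $C\|\psi|J|^2\|_{W^{1,1}(\Omega_0)}$, which is at most $C(\|\nabla\psi\|_{L^2}\|J\|_{L^4}^2+\|\psi\|_{L^6}\|J\|_{L^3}\|\nabla J\|_{L^2})\le C\mathcal{E}^\beta$. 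Here we use that on $\Omega_0$ the density has a positive lower bound, via $\rho^{\frac{\delta-1}{2}}\in L^6(\Omega_0)$ and Lemma \ref{087}, so that $\psi$ and $\nabla\psi$ are controlled by $\nabla\rho^{\frac{\delta-1}{2}}\in L^6\cap D^1$ and $\rho^{\frac{1-\delta}{2}}$.

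\emph{Main obstacle.} The hardest term is $\int J\div\psi\cdot J$, since $\div\psi$ is a second derivative of the density. We would integrate by parts to get $-\int\psi\cdot\nabla|J|^2$, with boundary term $\int_{\partial\Omega}\psi\cdot n|J|^2$ treated as above. On the whole domain, $\psi=c\rho^{\frac{1-\delta}{2}}\nabla\rho^{\frac{\delta-1}{2}}\in L^6$ with norm bounded by $\mathcal{E}^\beta$. Then $\int\psi\cdot J\nabla J\le\|\psi\|_{L^6}\|J\|_{L^3}\|\nabla J\|_{L^2}$, and the same bound works for $\psi\cdot\nabla J\cdot J$. The term $\int|\psi|^2|J|^2$ is bounded by $\|\psi\|_{L^6}^2\|J\|_{L^3}^2$, and $\int J\cdot\nabla u\cdot J\le\|J\|_{L^4}^2\|\nabla u\|_{L^2}$. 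All of these are $\le C\mathcal{E}^\beta$.

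\emph{Conclusion.} Summing the three inequalities, absorbing the $\epsilon$-terms, and integrating over $(0,t)$ gives \eqref{e34} for $t\le T_1$. The initial values are bounded by $CC_0^\beta$.
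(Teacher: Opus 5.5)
\textbf{The momentum step does not give a bound uniform in $R$.} Your overall route matches the paper's: three $L^2$ energy identities for \eqref{9}$_2$--\eqref{9}$_4$, each right-hand side bounded by $C\mathcal{E}^\beta$, then integration in time. The $H$ and $J$ parts are fine. The problem is that $C$ must be independent of $R$ for the limit $R\to\infty$ in Section 4 to work, and the momentum step as you wrote it does not achieve this.

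Several times you put the degenerate weight $\rho^{\frac{1-\delta}{2}}$ into $L^\infty$, and you are then left needing unweighted integrability of $u$ at the far field:
\begin{itemize}
\item You bound the term $\psi\cdot\mathcal{S}(u)\cdot u$ by $\|\nabla\rho^{\frac{\delta-1}{2}}\|_{L^6}\|\nabla u\|_{L^2}\|u\|_{L^3}$.
\item With the weight in $L^\infty$, the pressure term $\int\rho^{\frac{1-\delta}{2}}|\nabla\rho^{\gamma-\frac{1+\delta}{2}}||u|\,dx$ needs $u\in L^2$, or $\nabla\rho^{\gamma-\frac{1+\delta}{2}}\in L^{6/5}$.
\item Your first bound for $\int\rho^{1-\delta}|u|^2\div u\,dx$ needs $\nabla u\in L^{3/2}$.
\end{itemize}
On $\Omega_R$, $\mathcal{E}$ controls $u$ only in $L^6$ (and in $L^\infty$ once $\nabla^2u$ is bounded). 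It controls $u$ in $L^2$ only through $\rho^{\frac{1-\delta}{2}}u$, and this weight vanishes at infinity. So $\|u\|_{L^2}$, $\|u\|_{L^3}$ and $\|\nabla u\|_{L^{3/2}}$ on $\Omega_R$ are not controlled by $\mathcal{E}$ uniformly in $R$.

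\textbf{The repair.} Always keep one factor of the weight on $u$:
\begin{itemize}
\item for the pressure, use $\|\nabla\rho^{\gamma-\frac{1+\delta}{2}}\|_{L^2}\|\rho^{\frac{1-\delta}{2}}u\|_{L^2}$;
\item for the $\psi$ term, use $\|\nabla\rho^{\frac{\delta-1}{2}}\|_{L^6}\|\nabla u\|_{L^2}\|\rho^{\frac{1-\delta}{2}}u\|_{L^3}$;
\item for the $\div u$ term, use $\|\rho^{\frac{1-\delta}{2}}u\|_{L^3}\|\rho^{\frac{1-\delta}{2}}u\|_{L^6}\|\div u\|_{L^2}$;
\end{itemize}
together with $\|\rho^{\frac{1-\delta}{2}}u\|_{L^3}\le\|\rho^{\frac{1-\delta}{2}}u\|_{L^2}^{1/2}\|\rho\|_{L^\infty}^{\frac{1-\delta}{4}}\|u\|_{L^6}^{1/2}$. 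This version avoids the bound $\|\nabla^2u\|_{L^2}\le C\mathcal{E}^\beta$, which you assert ``by elliptic estimates'' without proof. The paper does rely on it: it puts $\nabla u$ in $L^3$ and $u$ in $L^\infty$ against $\|\rho^{\frac{1-\delta}{2}}u\|_{L^2}$. It derives the bound in \eqref{e39}--\eqref{qaqa3}, and must couple it with the elliptic estimate for $H$ because of the $J\cdot\nabla H$ term.

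\textbf{The $J$ step.} The $\div\psi$ term is not really an obstacle. $\mathcal{E}$ contains $\|\nabla^2\rho^{\frac{\delta-1}{2}}\|_{L^2}$, so the paper bounds $\int|J|^2|\div\psi|\,dx$ directly by $\|\rho\|_{L^\infty}^{1-\delta}\|J\|_{L^3}^2\|\nabla\rho^{\frac{\delta-1}{2}}\|_{L^6}^2+\|\rho\|_{L^\infty}^{\frac{1-\delta}{2}}\|J\|_{L^4}^2\|\nabla^2\rho^{\frac{\delta-1}{2}}\|_{L^2}$. This uses only the upper bound of $\rho$, so your appeal to the lower bound on $\Omega_0$ is not needed.

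Your integration by parts also works, and in fact it removes the need for any trace estimate. The boundary term it produces, $\frac{1+\delta}{2}\eta\int_{\partial\Omega}(\psi\cdot n)|J|^2dS$, is exactly the boundary term that $-\eta\int\Delta J\cdot J\,dx$ produces through \eqref{bdy-J}, so the two cancel. In addition, $-\frac{1+\delta}{2}\eta\int\psi\cdot\nabla|J|^2dx$ cancels $(1+\delta)\eta\int(\psi\cdot\nabla J)\cdot J\,dx$. What remains is $\eta(\|\curl J\|_{L^2}^2+\|\div J\|_{L^2}^2)$ on the left, against the transport and stretching terms and $\frac{(1+\delta)^2}{4}\eta\int|\psi|^2|J|^2dx$ on the right.
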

\begin{proof}
First,  it follows from \eqref{qaqa} that
\begin{equation}\label{rho_sup}
\|\rho\|_{L^\infty}=\|\rho^{\gamma-\frac{1+\delta}{2}}\|_{L^\infty}^\frac{2}{2\gamma-1-\delta}\leq C\|\rho^{\gamma-\frac{1+\delta}{2}}\|_{W^{1,6}}^\frac{2}{2\gamma-1-\delta}\leq C\mathcal{E}^\beta,
\end{equation}
in which $\beta=\beta(\delta, \gamma)>1$ is allowed to change from line to line. Therefore, one has 
\begin{gather}
\|\psi\|_{L^6}=\|\nabla\log\rho\|_{L^6}\leq C\|\rho\|_{L^\infty}^\frac{1-\delta}2\|\nabla\rho^{\frac{\delta-1}{2}}\|_{L^{6}}\leq C\mathcal{E}^\beta.\label{qaqa1}
\end{gather}
 
Next, it follows from Lemma \ref{elliptic_estimate}, Gagliardo-Nirenberg inequality, \eqref{9}$_2$, \eqref{9}$_5$-\eqref{9}$_7$, \eqref{qaqa}, \eqref{rho_sup}, and \eqref{qaqa1} that
\begin{equation}\label{e39}
\begin{split}
\|\nabla^2u\|_{L^2}
&\leq C\Big(\|\rho^{1-\delta}u_t\|_{L^2}+\|\rho^{1-\delta}u\cdot\nabla u\|_{L^2}+\|\nabla\rho^{\gamma-\delta}\|_{L^2}+\|\psi\cdot S(u)\|_{L^2}\\
&\quad\quad\quad+\|\rho^\frac{1-\delta}{2}J|\nabla H|\|_{L^2}+\|\nabla u\|_{L^2}\Big)\\
&\leq C\Big(\|\rho\|_{L^\infty}^\frac{1-\delta}2\|\rho^\frac{1-\delta}{2}u_t\|_{L^2}
+\|\rho\|_{L^\infty}^{1-\delta}\|\nabla u\|_{L^2}^\frac32\|\nabla u\|_{H^1}^\frac12
+\|\rho\|_{L^\infty}^\frac{1-\delta}2\|\nabla\rho^{\gamma-\frac{1+\delta}{2}}\|_{L^2}\\
&\quad\quad\quad+\|\psi\|_{L^{6}}\|\nabla u\|_{L^2}^\frac12\|\nabla u\|_{H^1}^\frac12
+\|\rho\|_{L^\infty}^\frac{1-\delta}2\|J\|_{L^6}\|\nabla H\|_{L^3}+\|\nabla u\|_{L^2}\Big)\\
&\leq C\mathcal{E}^\beta+\frac{1}{2}\|\nabla^2u\|_{L^2}+\frac14\|\nabla^2 H\|_{L^2}.
\end{split}
\end{equation}
Similarly, it follows from Lemma \ref{elliptic_estimate}, Gagliardo-Nirenberg inequality, \eqref{9}$_3$, \eqref{9}$_5$-\eqref{9}$_7$, \eqref{qaqa}, \eqref{rho_sup}, and \eqref{qaqa1}  that
\begin{equation}
\begin{split}\label{qaqa2}
\|\nabla^2H\|_{L^2}&\leq C\big(\|H_t\|_{L^2}+\||u||\nabla H|\|_{L^2}+\||\nabla u||H|\|_{L^2}+\|\nabla H\|_{L^2}\big)\\
&\leq C\big(\|H_t\|_{L^2}+\|\nabla u\|_{L^2}\|\nabla H\|_{L^2}^\frac12\|\nabla H\|_{H^1}^\frac12+\|\nabla H\|_{L^2}\big)\\
&\leq C\mathcal{E}^\beta+\frac{1}{4}\|\nabla^2H\|_{L^2}.
\end{split}
\end{equation}
This combined with 
 \eqref{e39}   and  Gagliardo-Nirenberg inequality yields that
\begin{align}\label{qaqa3}
\|u\|_{L^\infty}+\|\nabla^2u\|_{L^2}+\|H\|_{L^\infty}+\|\nabla^2H\|_{L^2}\leq C\mathcal{E}^\beta.
\end{align}

Now, multiplying the mass equation $\eqref{9}_1$ by $(1-\delta)\rho^{-\delta}$ show that
\begin{equation}\label{33}
(\rho^{1-\delta})_t+u\cdot\nabla\rho^{1-\delta}+(1-\delta)\rho^{1-\delta}\mathrm{div}u=0.
\end{equation}
Adding $\eqref{9}_2$ multiplied by  $u$ and  \eqref{33} multiplied by  $\frac{|u|^2}{2}$ together, we obtain after integrating the resulting equality on $\Omega_R$ by parts that
\begin{equation}\label{3.4}
\begin{split}
&\frac{1}{2}\frac{d}{dt}\|\rho^\frac{1-\delta}{2} u\|_{L^2}^2+\mu\|\mathrm{curl} u\|_{L^2}^2+(2\mu+\lambda)\|\mathrm{div}u\|_{L^2}^2
+\mu\int_{\partial\Omega}u\cdot A\cdot udS \\
&\leq C\int\Big(\rho^{1-\delta}|\mathrm{div}u||u|^2+|\nabla\rho^{\gamma-\frac{1+\delta}{2}}||\rho^\frac{1-\delta}{2} u|
+|\nabla\rho^\frac{\delta-1}{2}||\nabla u||\rho^\frac{1-\delta}{2} u|+ \rho^\frac{1-\delta}{2}|J||\nabla H| | u|\Big)dx\\
&\leq C\Big(\|\rho\|_{L^\infty}^\frac{1-\delta}{2}\|\nabla u\|_{L^2}\|u\|_{L^\infty}\|\rho^\frac{1-\delta}{2} u\|_{L^2}
+\|\nabla\rho^{\gamma-\frac{1+\delta}{2}}\|_{L^2}\|\rho^\frac{1-\delta}{2} u\|_{L^2}\\
&\quad\quad+\|\nabla\rho^\frac{\delta-1}{2}\|_{L^{6}}\|\nabla u\|_{L^{3}}\|\rho^\frac{1-\delta}{2} u\|_{L^2}+\|\rho\|_{L^\infty}^\frac{1-\delta}2\|J\|_{L^3}\|\nabla H\|_{L^2}\| u\|_{L^6}\Big)\\
&\leq C\mathcal{E}^\beta,
\end{split}
\end{equation}
where one has used Lemma \ref{G-N}, \eqref{qaqa}, \eqref{rho_sup}, and \eqref{qaqa3}.
Using the trace theorem, the boundary term in \eqref{3.4} can be governed as
\begin{align}\label{trace1}
\int_{\partial\Omega}u\cdot A\cdot udS\leq C\|u\|^2_{H^1(\Omega_0)}  &\leq C(\Omega_0)\|\nabla u\|_{L^2}^2\leq C\mathcal{E}^\beta.
\end{align}
Integrating   \eqref{3.4} over $(0,t)$,   using  \eqref{qaqa} and \eqref{trace1}, it yields that 
\begin{equation}
\begin{split}\nonumber
\sup_{0\leq s\leq t}\|\rho^\frac{1-\delta}{2}u\|_{L^2}^2+\int_0^t\|\nabla u\|_{L^2}^2ds
\leq CC_0^\beta+C\int_0^t\mathcal{E}^\beta ds.
\end{split}
\end{equation}

Then, integrating $\eqref{9}_3$  multiplied by $H$  over $\Omega_R$ leads to
\begin{equation}\label{qaqa4u}
\begin{split}
&\frac{1}{2}\frac{d}{dt}\|H\|_{L^2}^2+\eta\|\curl H\|_{L^2}^2\\
&\leq C\int(|\nabla u||H|^2+|u||\nabla H||H|)dx\\
&\leq C\|\nabla u\|_{L^2}\|H\|_{L^3}\|H\|_{L^6}+\|u\|_{L^6}\|\nabla H\|_{L^2}\|H\|_{L^3}\\
&\leq C\mathcal{E}^\beta,
\end{split}
\end{equation}
where one has used Lemma \ref{G-N}, \eqref{qaqa}, and \eqref{qaqa3}.
Integrating \eqref{qaqa4u} over $(0,t)$ and using  \eqref{qaqa}, one has
\begin{equation}
\begin{split}\nonumber
\sup_{0\leq s\leq t}\|H\|_{L^2}^2+\int_0^t\|\nabla H\|_{L^2}^2ds
\leq CC_0^\beta+C\int_0^t\mathcal{E}^\beta ds.
\end{split}
\end{equation}

Finally, multiplying $\eqref{9}_4$ by $J$ and integrating by parts lead  to
\begin{equation}\label{qaqa4}
\begin{split}
&\frac{1}{2}\frac{d}{dt}\|J\|_{L^2}^2+\eta\|\curl J\|_{L^2}^2+\eta\|\div J\|_{L^2}^2-\eta\int_{\partial\Omega}(\mbox{curl}J\times n)\cdot JdS\\
&\leq C\int\Big(|u\cdot\nabla J\cdot J|+J^2|\nabla u|+J^2|\nabla \psi|+J^2\psi^2+|\psi\cdot\nabla J\cdot J|\Big)dx\\
&\leq C\Big(\|u\|_{L^6}\|\nabla J\|_{L^2}\|J\|_{L^3}+\|J\|_{L^6}\|J\|_{L^3}\|\nabla u\|_{L^2}+\| \rho\|_{L^\infty}^{1-\delta}\|J\|_{L^3}^2\|\nabla\rho^\frac{\delta-1}{2}\|_{L^6}^2\\
&\quad+\|\rho\|_{L^\infty}^\frac{1-\delta}{2}\|J\|_{L^4}^2\|\nabla^2\rho^\frac{\delta-1}{2}\|_{L^2}
+\|J\|_{L^3}^2\|\psi\|_{L^6}^2+\|\psi\|_{L^6}\|\nabla J\|_{L^2}\|J\|_{L^3}\Big)\\
&\leq C\mathcal{E}^\beta,
\end{split}
\end{equation}
where one has used Lemma \ref{G-N}, \eqref{qaqa}, \eqref{rho_sup}, and \eqref{qaqa1}. Using the boundary condition $\eqref{9}_5$-$\eqref{9}_6$, the boundary term in \eqref{qaqa4} could be handled as follows,
\begin{equation}\label{1qaqa4}
\begin{split}
&\int_{\partial\Omega_R}(\mbox{curl}J\times n)\cdot JdS\\
&=-\frac{1+\delta}{2}\int_{\partial\Omega_R}(\psi \cdot n)|J|^2dS
=-\frac{1+\delta}{2}\int_{\Omega_R}\mbox{div}\left(\psi|J|^2\right)dx\\
&=-\frac{1+\delta}{2}\int_{\Omega_R}|J|^2\mbox{div}\psi dx-(1+\delta)\int_{\Omega_R}\psi\cdot\nabla J\cdot J dx\\
&\leq C\| \rho\|_{L^\infty}^{1-\delta}\|J\|_{L^3}^2\|\nabla\rho^\frac{\delta-1}{2}\|_{L^6}^2
+C\|\rho\|_{L^\infty}^\frac{1-\delta}{2}\|J\|_{L^4}^2\|\nabla^2\rho^\frac{1-\delta}{2}\|_{L^2}+C\|\psi\|_{L^{6}}\|\nabla J\|_{L^2}\|J\|_{L^3}\\
&\leq C\mathcal{E}^\beta,
\end{split}
\end{equation}
where one has used Lemma \ref{G-N}, \eqref{qaqa}, and \eqref{rho_sup}.

Thus, one derives \eqref{e34} after integrating  \eqref{qaqa4} over $(0,t)$ and using \eqref{qaqa}, \eqref{1qaqa4}. 
The proof of Lemma \ref{L3.1} is finished.
\end{proof}

The next  lemma concerns the estimates on  $L^\infty_tL^2_x$-norm of  $(\nabla J, \rho^\frac{1-\delta}{2}u_t, H_t)$.

\begin{lemma}\label{L3.3}
Let $(\rho,u,H,J)$ and $T_1$ be as in Lemma \ref{L3.1}. Then for all $t\in (0,T_1]$,
\begin{equation}\label{e313}
\begin{split}
&\sup_{0\leq s\leq t}\Big(\|\nabla J\|_{L^2}^2+\|\rho^\frac{1-\delta}{2}u_t\|_{L^2}^2+\|H_t\|_{L^2}^2\Big)+\int_0^t\Big(\|J_t\|_{L^2}^2+\|\nabla u_t\|_{L^2}^2+\|\nabla H_t\|_{L^2}^2\Big)ds\\
&\leq CC_0^\beta+C\int_0^t\mathcal{E}^\beta ds.
\end{split}
\end{equation}
\end{lemma}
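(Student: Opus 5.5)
We prove the three pieces of \eqref{e313} separately and then add them. As in Lemma \ref{L3.1}, every right-hand side is bounded by $C\mathcal{E}^\beta$ plus small multiples of the dissipation terms, which are then absorbed. Throughout we use \eqref{rho_sup}, \eqref{qaqa1} and \eqref{qaqa3}. The initial values are controlled by $C_0$. For $\rho_0^{\frac{1-\delta}{2}}u_t(0)$ we evaluate \eqref{9}$_2$ at $t=0$ and use the compatibility term $\|\rho_0^{\frac{\delta-1}{2}}\mathcal{L}u_0\|_{L^2}$. For $H_t(0)$ we use $\|H_0\|_{H^2}$ and $\|\nabla u_0\|_{H^1}$.

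\textbf{Estimate for $u_t$.} Differentiate \eqref{9}$_2$ in $t$, multiply by $u_t$, and integrate over $\Omega_R$. As in \eqref{3.4}, use \eqref{33} to write the $\rho^{1-\delta}$-weighted terms as $\frac12\frac{d}{dt}\|\rho^{\frac{1-\delta}{2}}u_t\|_{L^2}^2$. The viscous part gives $\mu\|\curl u_t\|_{L^2}^2+(2\mu+\lambda)\|\div u_t\|_{L^2}^2$ plus the boundary term $\mu\int_{\partial\Omega}u_t\cdot A\cdot u_t\,dS$. By the trace theorem and Lemma \ref{l22}, this controls $\|\nabla u_t\|_{L^2}^2$ up to $C\|u_t\|_{L^2(\Omega_0)}^2$. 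That lower-order term is bounded by $C\|\rho^{\frac{1-\delta}{2}}u_t\|_{L^2}^2$, because $\rho$ is bounded below on $\Omega_0$: this follows from $\|\rho^{\frac{\delta-1}{2}}\|_{L^6(\Omega_0)}$ and $\|\nabla\rho^{\frac{\delta-1}{2}}\|_{L^6}$ via Sobolev embedding on $\Omega_0$.

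The time derivatives of the coefficients produce $\rho_t=-\div(\rho u)$ and $\psi_t=-\nabla(u\cdot\psi+\div u)$. We integrate by parts wherever $\nabla^2\rho$ or $\nabla\psi$ would appear. The term $(\nabla\rho^{\gamma-\delta})_t$ becomes $\int \rho^{\gamma-\delta}_t\,\div u_t$. In $\delta\psi_t\cdot\mathcal{S}(u)$ we move the gradient onto $\mathcal{S}(u)\cdot u_t$, which produces $\|\nabla^2 u\|_{L^2}$, already bounded by \eqref{qaqa3}. For the magnetic terms we write $\rho^{\frac{1-\delta}{2}}J=\rho^{-\delta}H$. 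Then $(\rho^{\frac{1-\delta}{2}}J\cdot\nabla H)_t$ involves $H_t$, $\nabla H_t$, $\rho^{-\delta}_t=\delta\rho^{-\delta}(u\cdot\psi+\div u)$ and $J$. We bound these by H\"older's inequality, using $\|J\|_{L^6}$, $\|\nabla H\|_{L^3}$ and $\|u\|_{L^\infty}$. Since $\rho^{-\delta}$ is unbounded at infinity, we never take its $L^\infty$ norm: each factor $\rho^{-\delta}H$ is kept as $\rho^{\frac{1-\delta}{2}}J$. The $\nabla H_t$ contributions are absorbed using the $H_t$ estimate below.

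\textbf{Estimate for $H_t$.} Differentiate \eqref{9}$_3$ in $t$ and test with $H_t$. The boundary conditions $H_t\cdot n=0$ and $\curl H_t\times n=0$ remove the boundary terms. This gives $\frac{d}{dt}\|H_t\|_{L^2}^2+\eta\|\nabla H_t\|_{L^2}^2\le C\mathcal{E}^\beta+\epsilon\|\nabla u_t\|_{L^2}^2$, with the $\curl(u_t\times H)$ term bounded using $\|H\|_{L^\infty}$.

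\textbf{Estimate for $\nabla J$ (main obstacle).} Test \eqref{9}$_4$ with $J_t$. Integrating $-\eta\Delta J\cdot J_t$ by parts gives $\frac{\eta}{2}\frac{d}{dt}(\|\curl J\|_{L^2}^2+\|\div J\|_{L^2}^2)$ plus the boundary integral \eqref{pmy1}. Lemma \ref{l22} recovers $\|\nabla J\|_{L^2}$. The interior right-hand side is bounded by $\frac12\|J_t\|_{L^2}^2+C\mathcal{E}^\beta$. The worst interior term is $J\div\psi$, which is bounded using $\|\nabla^2\rho^{\frac{\delta-1}{2}}\|_{L^2}$ and $\|J\|_{L^\infty}$. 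The latter is controlled through Lemma \ref{l24} by $\|\nabla^2 J\|_{L^2}$, and $\|\nabla^2 J\|_{L^2}$ is in turn expressed through the equation by $\|J_t\|_{L^2}$ plus $C\mathcal{E}^\beta$, so it can be absorbed.

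The real difficulty is the boundary integral $-\frac{1+\delta}{2}\int_{\partial\Omega}(\psi\cdot n)J\cdot J_t\,dS$, since $\nabla J_t$ is not controlled. Following the introduction, we substitute $J_t=\rho^{-\frac{1+\delta}{2}}H_t+\frac{1+\delta}{2}J(u\cdot\psi+\div u)$. The boundary integral only sees $\Omega_0$, where $\rho$ is bounded below, so the trace theorem applies and gives the bound
\[
C\|\psi J\|_{H^1(\Omega_0)}\Big(\|H_t\|_{H^1(\Omega_0)}+\|J(u\cdot\psi+\div u)\|_{H^1(\Omega_0)}\Big).
\]
The first factor is bounded using $\|\nabla\psi\|_{L^2(\Omega_0)}$, which follows from $\nabla\rho^{\frac{\delta-1}{2}}\in D^1$ and the lower bound, together with $\|J\|_{L^\infty}$ and $\|\nabla J\|_{L^3}$. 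Interpolation through Lemma \ref{l24} then gives a bound of the form $\epsilon(\|\nabla H_t\|_{L^2}^2+\|\nabla^2 J\|_{L^2}^2+\|\nabla^2u\|_{L^2}^2)+C\mathcal{E}^\beta$. Finally, adding the three inequalities, choosing $\epsilon$ small, and integrating over $(0,t)$ yields \eqref{e313}.
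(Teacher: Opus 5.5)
Your architecture matches the paper's proof, and the boundary integral closes your way too. You test \eqref{9}$_4$ with $J_t$ and bound $\|\nabla^2J\|_{L^2}$ by $C\mathcal{E}^\beta+C\|J_t\|_{L^2}$ via Lemma \ref{l24} (this is \eqref{muy01}). You rewrite $J_t=\rho^{-\frac{1+\delta}{2}}H_t+\frac{1+\delta}{2}J(u\cdot\psi+\div u)$ in the boundary integral and use the lower bound \eqref{rho_lb} on $\Omega_0$. You test the time-differentiated momentum and induction equations with $u_t$ and $H_t$, then add, absorb, and use the compatibility condition at $t=0$.

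In the boundary integral you keep $J$ in the factor $\|\psi J\|_{H^1(\Omega_0)}$. This costs $\|J\|_{L^\infty}+\|\nabla J\|_{L^3}\le C\mathcal{E}^\beta(1+\|\nabla^2J\|_{L^2}^{1/2})$, which you then trade for $\|J_t\|_{L^2}$ through \eqref{muy01}. The paper instead writes $J=\rho^{-\frac{1+\delta}{2}}H$ in that factor as well. Then only $H$-norms already bounded by \eqref{qaqa3} appear, and the only dissipative cost is $\epsilon\|\nabla H_t\|_{L^2}^2$ (see \eqref{ze313}).

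Two steps of your $u_t$ estimate need repair.

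\textbf{Magnetic terms.} After writing $\rho^{\frac{1-\delta}{2}}J\cdot\nabla H=\rho^{-\delta}H\cdot\nabla H$ and differentiating in time, you get $\rho^{-\delta}H_t\cdot\nabla H$ and $\rho^{-\delta}\nabla H\cdot H_t$. These contain no factor $\rho^{-\delta}H$, so your recipe (use $\|J\|_{L^6}$ and $\|\nabla H\|_{L^3}$, never $\|\rho^{-\delta}\|_{L^\infty}$) does not cover them. You must attach the weight to $\nabla H$ instead: $\rho^{-\delta}\nabla H=\rho^{\frac{1-\delta}{2}}\big(\nabla J+\frac{1+\delta}{2}\psi\otimes J\big)$ (cf. \eqref{lop1}). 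After that, $\|H_t\|_{L^3}\|\nabla J\|_{L^2}\|u_t\|_{L^6}+\|H_t\|_{L^2}\|\psi\|_{L^6}\|J\|_{L^6}\|u_t\|_{L^6}$ suffices, and your $u_t$ estimate no longer involves $J_t$. The paper instead differentiates in the $J$-form. This produces $\rho^{\frac{1-\delta}{2}}J_t\cdot\nabla H\cdot u_t$ (the term $I_9$), which is absorbed by the $\|J_t\|_{L^2}^2$ dissipation from the $J$ estimate.

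\textbf{The $\psi_t$ term.} Integrating by parts in $\delta\psi_t\cdot\mathcal{S}(u)\cdot u_t$ is unnecessary and, as written, incomplete. It generates $\int_{\partial\Omega_R}(u\cdot\psi+\div u)(\mathcal{S}(u)n)\cdot u_t\,dS$ on both $\partial\Omega$ and $\partial B_R$. This term does not vanish, since only $u_t\cdot n=0$ is available, and you do not estimate it. The paper simply bounds $\|\psi_t\|_{L^2}\le C\mathcal{E}^\beta$ directly, as in \eqref{qaz4}. This works because $u\cdot\nabla\psi$ is controlled in $L^2$ by $\|\rho\|_{L^\infty}$, $\|u\|_{L^6\cap L^\infty}$ and $\|\nabla\rho^{\frac{\delta-1}{2}}\|_{L^6\cap D^1}$, and $\nabla\div u$ by \eqref{qaqa3}. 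These are the same quantities you already use for $J\div\psi$ in the $J$ estimate.
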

\begin{proof}
\textbf{(1)} First, we will derive the estimate on $\nabla J$ and $J_t$. One deduces from Lemma \ref{l24}, Gagliardo-Nirenberg inequality, \eqref{9}$_4$-\eqref{9}$_7$, \eqref{qaqa}, \eqref{rho_sup}, and \eqref{qaqa1} that
\begin{equation}
\begin{split}\nonumber
\|\nabla^2J\|_{L^2}&\leq C\Big(\|J_t\|_{L^2}+\|u\cdot\nabla J\|_{L^2}+\|J|\nabla u|\|_{L^2}+\| J\mbox{div}\psi\|_{L^2}+\| J\psi^2\|_{L^2}\\
&\quad+\|\psi\cdot\nabla J\|_{L^2}+\big(1+\|\psi\|^2_{L^6\cap D^{1}}\big)\|J\|_{H^1}\Big)\\
&\leq C\Big(\|J_t\|_{L^2}+\|u\|_{L^6}\|\nabla J\|_{L^2}^\frac12\|\nabla J\|_{H^1}^\frac12+\|\nabla J\|_{L^2}^\frac12\|\nabla J\|_{H^1}^\frac12\|\nabla u\|_{L^2}\\
&\quad+\| \rho\|_{L^\infty}^{1-\delta}\|J\|_{L^6}\|\nabla\rho^\frac{\delta-1}{2}\|_{L^6}^2
+\|\rho\|_{L^\infty}^\frac{1-\delta}{2}\|\nabla J\|_{L^2}^\frac12\|\nabla J\|_{H^1}^\frac12\|\nabla^2\rho^\frac{\delta-1}{2}\|_{L^2}\\
&\quad+\| J\|_{L^6}\|\psi\|_{L^6}^2+\|\psi\|_{L^6}\|\nabla J\|_{L^2}^\frac12\|\nabla J\|_{H^1}^\frac12+\mathcal{E}^\beta\Big)\\
&\leq C\mathcal{E}^\beta+C\|J_t\|_{L^2}+\frac{1}{4}\|\nabla^2J\|_{L^2},
\end{split}
\end{equation}
which directly yields that
\begin{align}\label{muy01}
\|\nabla^2J\|_{L^2}\leq C\mathcal{E}^\beta+C\|J_t\|_{L^2}.
\end{align}

Next, multiplying $\eqref{9}_4$ by $J_t$ and integrating the resulting equality by parts, it holds
\begin{align}\label{plm2}
&\frac{\eta}{2}\frac{d}{dt}\left(\|\mathrm{curl}J\|_{L^2}^2+\|\mathrm{div}J\|_{L^2}^2\right)+\| J_t\|_{L^2}^2-\eta\int_{\partial\Omega_R}(\mbox{curl}J\times n)\cdot J_tdS\nonumber\\
&\leq C\int\Big(|u\cdot\nabla J\cdot J_t|+|J||\nabla u|||J_t|+|J||\nabla \psi||J_t|+|J|\psi^2|J_t|+|\psi\cdot\nabla J\cdot J_t|\Big)dx\nonumber\\
&\leq C\Big(\|u\|_{L^\infty}\|\nabla J\|_{L^2}\|J_t\|_{L^2}+\|J\|_{L^\infty}\|\nabla u\|_{L^2}\|J_t\|_{L^2}+\| \rho\|_{L^\infty}^{1-\delta}\|J\|_{L^6}\|\nabla\rho^\frac{\delta-1}{2}\|_{L^6}^2\|J_t\|_{L^2}\nonumber\\
&\quad+\|\rho\|_{L^\infty}^\frac{1-\delta}{2}\|J\|_{L^\infty}\|\nabla^2\rho^\frac{\delta-1}{2}\|_{L^2}\|J_t\|_{L^2}
+\|J\|_{L^6}\|\psi\|_{L^6}^2\|J_t\|_{L^2}+\|\psi\|_{L^6}\|\nabla J\|_{L^3}\|J_t\|_{L^2}\Big)\nonumber\\
&\leq \frac{1}{2}\|J_t\|_{L^2}^2+C\mathcal{E}^\beta,
\end{align}
where one has used Lemma \ref{G-N}, \eqref{qaqa},  \eqref{rho_sup}, \eqref{qaqa1}, \eqref{qaqa3},  \eqref{muy01} and the following boundary estimates: 
\begin{align*}
-\eta\int_{\Omega_R}\Delta J\cdot J_tdx&=\eta\int_{\Omega_R}\mbox{curl}^2 J\cdot J_tdx-\eta\int_{\Omega_R}\nabla
\mbox{div} J\cdot J_tdx\\
&=\frac{\eta}{2}\frac{d}{dt}\left(\int_{\Omega_R}|\mbox{curl}J|^2dx+\int_{\Omega_R}|\mbox{div}J|^2dx\right)-\eta\int_{\partial\Omega_R}(\mbox{curl}J\times n)\cdot J_tdS.
\end{align*} 
Note that
\begin{equation}\label{rho_lb}
    \|\rho^{-\frac{1+\delta}{2}}\|_{L^\infty(\Omega_0)}
    =\|\rho^\frac{\delta-1}{2}\|_{L^\infty(\Omega_0)}^\frac{1+\delta}{1-\delta}
    \leq C\|\rho^\frac{\delta-1}{2}\|_{W^{1,6}(\Omega_0)}^\frac{1+\delta}{1-\delta}
    \leq C\mathcal{E}^\beta,
\end{equation}
which together with the boundary conditions  $\eqref{9}_5$-$\eqref{9}_6$ yields
\begin{equation}\label{ze313}
\begin{split}
&\int_{\partial\Omega_R}(\mbox{curl}J\times n)\cdot J_tdS=-\frac{1+\delta}{2}\int_{\partial\Omega_R}(\psi \cdot n)J \cdot J_tdS\\
&=-\frac{1+\delta}{2}\int_{\partial\Omega}(n\cdot\psi) J \cdot \Big(H_t\rho^{-\frac{1+\delta}{2}}-Hu\cdot\nabla\rho^{-\frac{1+\delta}{2}}+\frac{1+\delta}{2}H\rho^{-\frac{1+\delta}{2}}\div u\Big)dx\\
&\leq C\mathcal{E}^\beta\left\||\psi| |H|\big(|H_t|+|Hu\cdot\psi|+|H\div u|\big)\right\|_{W^{1,1}(\Omega_0)}\\
&\leq C\mathcal{E}^\beta\bigl(\|\psi\|_{L^6}\|H\|_{L^2}\|H_t\|_{L^3}+\|\rho\|_{L^\infty}^{1-\delta}\|H\|_{L^6}\|H_t\|_{L^2}\|\nabla\rho^\frac{\delta-1}{2}\|_{L^6}^2
\\
&\quad+\|\rho\|_{L^\infty}^\frac{1-\delta}{2}\|H\|_{L^\infty}\|H_t\|_{L^2}\|\nabla^2\rho^\frac{1-\delta}{2}\|_{L^2}+\|\psi\|_{L^6}\|\nabla H\|_{L^2}\|H_t\|_{L^3}\\
&\quad+\|\psi\|_{L^6}\|H\|_{L^3}\|\nabla H_t\|_{L^2}+\|\psi\|_{L^6}^2\|H\|_{L^4}^2\|u\|_{L^6}\\
&\quad+\|\rho\|_{L^\infty}^{1-\delta}\|\nabla\rho^\frac{\delta-1}{2}\|_{L^6}^2\|\psi\|_{L^6}\|H\|_{L^4}^2\|u\|_{L^\infty}
+\|\rho\|_{L^\infty}^\frac{1-\delta}{2}\|\nabla^2\rho^\frac{\delta-1}{2}\|_{L^2}\|\psi\|_{L^6}\|H\|_{L^6}^2\|u\|_{L^\infty}\\
&\quad+\|\psi\|_{L^6}^2\|\nabla H\|_{L^2}\|H\|_{L^6}\|u\|_{L^\infty}+\|\psi\|_{L^6}^2\|H\|_{L^4}^2\|\nabla u\|_{L^6}+\|\psi\|_{L^6}\|H\|_{L^6}^2\|\nabla u\|_{L^2}\\
&\quad+\|\rho\|_{L^\infty}^{1-\delta}\|\nabla\rho^\frac{\delta-1}{2}\|_{L^6}^2\|H\|_{L^4}^2\|\nabla u\|_{L^6}+\|\rho\|_{L^\infty}^\frac{1-\delta}{2}\|\nabla^2\rho^\frac{\delta-1}{2}\|_{L^2}\|H\|_{L^6}^2\|\nabla u\|_{L^6}\\
&\quad+\|\psi\|_{L^6}\|\nabla H\|_{L^6}\|H\|_{L^6}\|\nabla u\|_{L^2}+\|\psi\|_{L^6}\|H\|_{L^6}^2\|\nabla^2 u\|_{L^2}\bigr)\\
&\leq C\mathcal{E}^\beta+\epsilon\|\nabla H_t\|_{L^2}^2.
\end{split}
\end{equation}
This along with   \eqref{plm2} yields 
\begin{equation}\label{muyo}
\eta\frac{d}{dt}\left(\|\mathrm{curl}J\|_{L^2}^2+\|\mathrm{div}J\|_{L^2}^2\right)+\| J_t\|_{L^2}^2\leq C\mathcal{E}^\beta+\epsilon\|\nabla H_t\|_{L^2}^2.
	\end{equation}

\textbf{(2)} Now, we will derive the estimates on $\rho^\frac{1-\delta}{2}u_t$ and $\nabla u_t$. It follows from $\eqref{9}_1$ that $\rho^\frac{1-\delta}{2}$, $\rho^{\gamma-\delta}$ and $\psi$  satisfy
$$
\begin{gathered}
(\rho^\frac{1-\delta}{2})_t=-u\cdot\nabla\rho^\frac{1-\delta}{2}-\frac{1-\delta}{2}\rho^\frac{1-\delta}{2}\mathrm{div}u,\\
(\rho^{\gamma-\delta})_t=-u\cdot\nabla\rho^{\gamma-\delta}-(\gamma-\delta)\rho^{\gamma-\delta}\mathrm{div}u,\\
\psi_t=-u\cdot\nabla\psi-\nabla u\cdot \psi-\nabla\mathrm{div}u,
\end{gathered}
$$
which combining with Lemma \ref{G-N}, \eqref{qaqa},  \eqref{rho_sup}, \eqref{qaqa1}, and \eqref{qaqa3} that
\begin{gather}
\|(\rho^\frac{1-\delta}{2})_t\|_{L^3}\leq\|\rho\|_{L^\infty}^{1-\delta}\|u\|_{L^6}\|\nabla\rho^{\frac{\delta-1}{2}}\|_{L^6}+\|\rho\|_{L^\infty}^{(1-\delta)/2}\|\nabla u\|_{L^3}\leq C\mathcal{E}^\beta,\label{qaz1}\\
\|(\rho^{\gamma-\delta})_t\|_{L^2}\leq\|\rho\|_{L^\infty}^{(1-\delta)/2}\|u\|_{L^\infty}\|\nabla\rho^{\gamma-\frac{1+\delta}{2}}\|_{L^2}+\|\rho\|_{L^\infty}^{\gamma-\delta}\|\nabla u\|_{L^2}\leq C\mathcal{E}^\beta,\label{qaz2}
\end{gather}
\begin{equation}\label{qaz4}
\begin{split}
\|\psi_t\|_{L^2}
&\leq C\bigl(\|\nabla u\|_{L^{3}}\|\psi\|_{L^{6}}+\|\rho\|_{L^\infty}^{1-\delta}\|u\|_{L^{6}}\|\nabla\rho^\frac{\delta-1}{2}\|_{L^{6}}^2\\
&\quad+\|\rho\|_{L^\infty}^{(1-\delta)/2}\|u\|_{L^\infty}\|\nabla^2\rho^\frac{\delta-1}{2}\|_{L^{2}}
+\|\nabla\mathrm{div}u\|_{L^2}\bigr)\\
&\leq C\mathcal{E}^\beta.
\end{split}
\end{equation}

Next, operating $\partial_t$ to $\eqref{9}_2$ yields that
\begin{equation}\label{32}
\begin{split}
&(\rho^{1-\delta})_tu_t+\rho^{1-\delta}u_{tt}
+(\rho^{1-\delta})_tu\cdot\nabla u
+\rho^{1-\delta}u_t\cdot\nabla u
+\rho^{1-\delta}u\cdot\nabla u_t
-\mathcal{L}u_t
+\frac{a\gamma}{\gamma-\delta}\nabla(\rho^{\gamma-\delta})_t\\
&=\delta\psi_t\cdot S(u)+\delta\psi\cdot S(u)_t+ (\rho^\frac{1-\delta}{2})_tJ\cdot\nabla H+\rho^\frac{1-\delta}{2}J_t\cdot\nabla H+ \rho^\frac{1-\delta}{2}J\cdot\nabla H_t\\
&\quad-(\rho^\frac{1-\delta}{2})_t\nabla H\cdot J-\rho^\frac{1-\delta}{2}\nabla H_t\cdot J-\rho^\frac{1-\delta}{2}\nabla H\cdot J_t.
\end{split}
\end{equation}
Multiplying the above equality by $u_t$, we obtain after using integration by parts and $\eqref{33}$ that
\begin{equation}
\begin{split}\label{qaqa11}
&\frac{1}{2}\frac{d}{dt}\|\rho^\frac{1-\delta}{2}u_t\|_{L^2}^2+\mu\|\mathrm{curl} u_t\|_{L^2}^2+(2\mu+\lambda)\|\mathrm{div}u_t\|_{L^2}^2
+\mu\int_{\partial\Omega}u_t\cdot A\cdot u_tdS \\
&\leq C\int\Big(\rho^{1-\delta}|\nabla u||u_t|^2+\rho^{1-\delta}|u||\nabla u_t||u_t|+|u|^2|\nabla\rho^{1-\delta}||\nabla u||u_t|+\rho^{1-\delta}|u||\nabla u|^2|u_t|\\
&\quad+|(\rho^{\gamma-\delta})_t||\div u_t|+|\psi_t||\nabla u||u_t|+|\psi||\nabla u_t||u_t|+|(\rho^\frac{1-\delta}{2})_t||J||\nabla H||u_t|\\
&\quad+\rho^\frac{1-\delta}{2}|J_t||\nabla H||u_t|+\rho^\frac{1-\delta}{2}|J||\nabla H_t||u_t|\Big)dx
\triangleq\sum_{i=1}^{10} I_i.
\end{split}
\end{equation}
Thanks to Lemma \ref{G-N}, \eqref{qaqa}, \eqref{rho_sup}, \eqref{qaqa1}, \eqref{qaqa3}, and \eqref{qaz1}-\eqref{qaz4}, we can estimate each $I_i$ as follows:
$$
\begin{aligned}
&I_1\leq C\|\rho\|_{L^\infty}^{(1-\delta)/2}\|\nabla u\|_{L^3}\|\rho^\frac{1-\delta}{2}u_t\|_{L^2}\|u_t\|_{L^6}
\leq C\mathcal{E}^\beta+\epsilon\|\nabla u_t\|_{L^2}^2,\\
&I_2\leq C\|\rho\|_{L^\infty}^{(1-\delta)/2}\|u\|_{L^\infty}\|\nabla u_t\|_{L^2}\|\rho^\frac{1-\delta}{2}u_t\|_{L^2}
\leq C\mathcal{E}^\beta+\epsilon\|\nabla u_t\|_{L^2}^2,\\
&I_3\leq C\|\rho\|_{L^\infty}^{3(1-\delta)/2}\|u\|_{L^6}\|u\|_{L^\infty}\|\nabla\rho^\frac{\delta-1}{2}\|_{L^{6}}\|\nabla u\|_{L^2}\|u_t\|_{L^6}
\leq C\mathcal{E}^\beta+\epsilon\|\nabla u_t\|_{L^2}^2,\\
&I_4\leq C\|\rho\|_{L^\infty}^{1-\delta}\|u\|_{L^6}\|\nabla u\|_{L^2}\|\nabla u\|_{L^6}\|u_t\|_{L^6}
\leq C\mathcal{E}^\beta+\epsilon\|\nabla u_t\|_{L^2}^2,\\
&I_5\leq C\|(\rho^{\gamma-\delta})_t\|_{L^2}\|\nabla u_t\|_{L^2}
\leq C\mathcal{E}^\beta+\epsilon\|\nabla u_t\|_{L^2}^2,\\
&I_6\leq C\|\psi_t\|_{L^2}\|\nabla u\|_{L^3}\|u_t\|_{L^6}
\leq C\mathcal{E}^\beta+\epsilon\|\nabla u_t\|_{L^2}^2,\\
&I_7 \leq C\|\nabla \rho^\frac{\delta-1}{2}\|_{L^{6}}\|\nabla u_t\|_{L^2}\|\rho^\frac{1-\delta}{2}u_t\|_{L^3}\leq C\mathcal{E}^\beta+\epsilon\|\nabla u_t\|_{L^2}^2,\\
&I_{8}\leq C\|(\rho^\frac{1-\delta}{2})_t\|_{L^3}\|J\|_{L^6}\|\nabla H\|_{L^{3}}\|u_t\|_{L^6}\leq C\mathcal{E}^\beta+\epsilon\|\nabla u_t\|_{L^2}^2,\\
&I_{9}\leq C\|J_t\|_{L^2}\|\nabla H\|_{L^{6}}\|\rho^\frac{1-\delta}{2}u_t\|_{L^3}\leq C\mathcal{E}^\beta+\epsilon\|J_t\|_{L^2}^2+\epsilon\|\nabla u_t\|_{L^2}^2,\\
&I_{10}\leq C\|J\|_{L^6}\|\nabla H_t\|_{L^{2}}\|\rho^\frac{1-\delta}{2}u_t\|_{L^3}\leq C\mathcal{E}^\beta+\epsilon\|\nabla u_t\|_{L^2}^2+\epsilon\|\nabla H_t\|_{L^2}^2.
\end{aligned}
$$

Using the trace theorem, the boundary term in \eqref{qaqa11} can be governed as
\begin{equation}\label{trace3}
\begin{split}
\int_{\partial\Omega}u_t\cdot A\cdot u_tdS
&\leq C\left\||u_t|^2\right\|_{W^{1,1}(\Omega_0)}  \\
&\leq C\|\rho^{\delta-1}\|_{L^\infty(\Omega_0)}\|\rho^\frac{1-\delta}2u_t\|_{L^2(\Omega_0)}^2+C\|\rho^\frac{\delta-1}2\|_{L^\infty(\Omega_0)}\|\rho^\frac{1-\delta}2u_t\|_{L^2}\|\nabla u_t\|_{L^2}\\
&\leq C\mathcal{E}^\beta\|\rho^\frac{1-\delta}{2}u_t\|_{L^2}^2+\epsilon\|\nabla u_t\|_{L^2}^2
\end{split}
\end{equation}
owing to \eqref{rho_lb}.
And, it follows from  Lemma \ref{l22}, \eqref{qaqa}, and \eqref{rho_lb} that
\begin{equation}\label{3.4aqa0}
\begin{split}
\|\nabla u_t\|_{L^2}
&\leq C\big(\|\curl u_t\|_{L^2}+\|\div u_t\|_{L^2}+\|u_t\|_{L^2(\Omega_0)}\big)\\
&\leq C\big(\|\curl u_t\|_{L^2}+\|\div u_t\|_{L^2}+\|\rho^\frac{\delta-1}2\|_{L^\infty(\Omega_0)}\|\rho^\frac{1-\delta}2u_t\|_{L^2(\Omega_0)}\big)\\
&\leq C\big(\|\curl u_t\|_{L^2}+\|\div u_t\|_{L^2}+\mathcal{E}^\beta\big).
\end{split}
\end{equation}

Substituting $I_1$--$I_{10}$ into \eqref{qaqa11}, combining with \eqref{trace3} and \eqref{3.4aqa0}, choosing $\epsilon$ small enough, one obtains
\begin{equation}
\frac{d}{dt}\|\rho^\frac{1-\delta}{2}u_t\|_{L^2}^2+C\|\nabla u_t\|_{L^2}^2\leq C\mathcal{E}^\beta+\epsilon\|J_t\|_{L^2}^2+\epsilon\|\nabla H_t\|_{L^2}^2. \label{35}
\end{equation}

\textbf{(3)} Then, we will handle the estimates on $\nabla H$ and $H_t$. Operating $\partial_t$ to $\eqref{9}_3$ yields that
\begin{equation}
\begin{split}\nonumber
&H_{tt}-\eta\Delta H_t=\curl(u_t\times H)+\curl(u\times H_t).
\end{split}
\end{equation}
Multiplying the above equality by $H_t$, we obtain after using integration by parts that
\begin{equation}
\begin{split}\label{pop11}
&\frac12\frac{d}{dt}\|H_{t}\|_{L^2}^2+C\|\nabla H_t\|_{L^2}^2\\
&\leq C\int\Big(|\nabla u_t||H||H_t|+|u_t||\nabla H||H_t|+|\nabla u||H_t||H_t|+|u||\nabla H_t||H_t|\Big)dx\\
&\leq C\|\nabla u_t\|_{L^2}\|H\|_{L^\infty}\|H_t\|_{L^2}+C\|u_t\|_{L^6}\|\nabla H\|_{L^3}\|H_t\|_{L^2}+C\|\nabla u\|_{L^3}\|H_t\|_{L^2}\|H_t\|_{L^6}\\
&\quad +C\|u\|_{L^\infty}\|\nabla H_t\|_{L^2}\|H_t\|_{L^2}\\
&\leq C\mathcal{E}^\beta+\epsilon\|\nabla u_t\|_{L^2}^2+\epsilon\|\nabla H_t\|_{L^2}^2,
\end{split}
\end{equation}
where one has used Lemma \ref{G-N}, \eqref{qaqa}, \eqref{qaqa3} and
\begin{align}\label{hqaz}
\|\nabla H_t\|_{L^2}\leq C\|\curl H_t\|_{L^2}+C\|H_t\|_{L^2}
\leq C\|\curl H_t\|_{L^2}+C\mathcal{E}^\beta.
\end{align}
%

\textbf{(4)} With  \eqref{muyo}, \eqref{35} and \eqref{pop11} at hand, add them together and choosing $\epsilon$ small enough, we have
\begin{equation}
\begin{split}
&\frac{d}{dt}\Big(\eta(\|\div J\|_{L^2}^2+\|\curl J\|_{L^2}^2)+\|\rho^\frac{1-\delta}{2}u_t\|_{L^2}^2+\|H_t\|_{L^2}^2\Big)+\| J_t\|_{L^2}^2+C\|\nabla u_t\|_{L^2}^2\\
&\quad+C\|\nabla H_t\|_{L^2}^2\leq C\mathcal{E}^\beta. \label{35q}
\end{split}
\end{equation}

Multiplying the reformulated momentum equation $\eqref{9}_2$ by $u_t$, we can easily deduce that
\begin{equation}\label{317}
\begin{split}
&\limsup\limits_{\tau \to 0}\|\rho^\frac{1-\delta}{2}u_t(\tau)\|_{L^2}\\
&\leq C\Big(\|\rho_0\|_{L^\infty}^\frac{1-\delta}{2}\|u_0\|_{L^\infty}\|\nabla u_0\|_{L^2}
+\|\rho_0^\frac{\delta-1}{2}\mathcal{L}u_0\|_{L^2}+\|\nabla\rho_0^{\gamma-\frac{1+\delta}{2}}\|_{L^2}
\\
&\quad+\|\nabla\rho_0^\frac{\delta-1}{2}\|_{L^{6}}\|\nabla u_0\|_{H^1}+\|J_0\|_{L^6}\|\nabla H_0\|_{L^3}\Big)\\
&\leq C\bigl(\|\rho_0^{\gamma-\frac{1+\delta}{2}}\|_{W^{1,6}}^\frac{1-\delta}{2\gamma-1-\delta}\|\nabla u_0\|_{H^1}\|\nabla u_0\|_{L^2}
+\|\rho_0^\frac{\delta-1}{2}\mathcal{L}u_0\|_{L^2}
+\|\nabla\rho_0^{\gamma-\frac{1+\delta}{2}}\|_{L^2}\\
&\quad+\|\nabla\rho_0^\frac{\delta-1}{2}\|_{L^{6}}\|\nabla u_0\|_{H^1}
+\|\nabla J_0\|_{L^2}\|\nabla H_0\|_{H^1}\bigr)
\\
&\leq CC_0^\beta.
\end{split}
\end{equation}

From the magnetic field equations \eqref{9}$_3$, we have
\begin{equation}
\begin{split}\label{hczg}
\limsup\limits_{\tau \to 0}\|H_t(\tau)\|_{L^2}
&\leq C(\|\nabla u_0\|_{L^2}\|H_0\|_{L^\infty}+\|u_0\|_{L^6}\|\nabla H_0\|_{L^3}+\|\nabla^2 H_0\|_{L^2})\\
&\leq C(\|\nabla u_0\|_{L^2}\|H_0\|_{H^2}+\|\nabla u_0\|_{L^2}\|\nabla H_0\|_{H^1}+\|\nabla^2 H_0\|_{L^2})\\
&\leq CC_0^\beta.
\end{split}
\end{equation}

Finally, integrating \eqref{35q} over $(\tau,t)$ and letting $\tau\rightarrow 0$, we obtain \eqref{e313} after using Lemma \ref{l22}, \eqref{e34}, \eqref{317}, and \eqref{hczg} . The proof of Lemma \ref{L3.3} is completed.
\end{proof}

Next, the following lemma concerns the estimates on  $L^\infty_tL^2_x$-norm of  $(\nabla u, \nabla H)$.


\begin{lemma}\label{L3.2}
	Let $(\rho,u,H,J)$ and $T_1$ be as in Lemma \ref{L3.1}. Then for all $t\in (0,T_1]$,
	\begin{equation}\label{e310}
    \begin{split}
		&\sup_{0\leq s\leq t}\Big(
        \|\nabla u\|_{L^2}^2+\|\nabla H\|_{L^2}^2\Big)+\int_0^t\Big(\|\rho^\frac{\delta-1}{2}\mathcal{L}u\|_{L^2}^2+\|H_t\|_{L^2}\Big)ds\\
		&\leq CC_0^\beta+C\int_0^t\mathcal{E}^\beta ds.
        \end{split}
	\end{equation}
\end{lemma}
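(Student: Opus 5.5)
Two energy identities drive the proof: testing the reformulated momentum equation $\eqref{9}_2$ against $\rho^{\delta-1}\mathcal{L}u$ controls $\rho^{\frac{\delta-1}{2}}\mathcal{L}u$, and testing the magnetic equation $\eqref{9}_3$ against $H_t$ controls $\nabla H$ and $H_t$. The bounds \eqref{rho_sup}, \eqref{qaqa1}, \eqref{qaqa3}, \eqref{rho_lb} and Lemma \ref{L3.3} handle the remainders. The result is a differential inequality whose time integral gives \eqref{e310}.

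\textbf{Step 1 (velocity).} Rewrite $\eqref{9}_2$ as
$$u_t+u\cdot\nabla u-\rho^{\delta-1}\mathcal{L}u=\rho^{\delta-1}\big(\delta\psi\cdot\mathcal{S}(u)-\tfrac{a\gamma}{\gamma-\delta}\nabla\rho^{\gamma-\delta}+\rho^{\frac{1-\delta}2}(J\cdot\nabla H-\nabla H\cdot J)\big).$$
Multiply by $-\mathcal{L}u$ and integrate. Using $\mathcal{L}u=(2\mu+\lambda)\nabla\div u-\mu\nabla\times\curl u$ and integrating by parts with the Navier-slip conditions $\eqref{9}_5$--$\eqref{9}_7$, the term $-\int u_t\cdot\mathcal{L}u\,dx$ becomes
$$\frac12\frac{d}{dt}\Big(\mu\|\curl u\|_{L^2}^2+(2\mu+\lambda)\|\div u\|_{L^2}^2+\mu\int_{\partial\Omega}u\cdot A\cdot u\,dS\Big).$$
The dissipation term is $\|\rho^{\frac{\delta-1}{2}}\mathcal{L}u\|_{L^2}^2$. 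Each right-hand term is written as $\rho^{\frac{\delta-1}2}(\cdots)\cdot\rho^{\frac{\delta-1}2}\mathcal{L}u$ and bounded by Cauchy--Schwarz:
\begin{itemize}
\item the convection term is bounded by $\|\rho\|_{L^\infty}^{\frac{1-\delta}2}\|u\|_{L^\infty}\|\nabla u\|_{L^2}$;
\item the pressure term is bounded by $\|\nabla\rho^{\gamma-\frac{1+\delta}2}\|_{L^2}$;
\item the viscous-singular term $\rho^{\frac{\delta-1}{2}}\psi\cdot\mathcal{S}(u)$ is comparable to $\nabla\rho^{\frac{\delta-1}{2}}\cdot\mathcal{S}(u)$ and is bounded by $\|\nabla\rho^{\frac{\delta-1}2}\|_{L^6}\|\nabla u\|_{L^3}$;
\item the magnetic term is bounded by $\|J\|_{L^6}\|\nabla H\|_{L^3}$.
\end{itemize}
Every one of these is at most $C\mathcal{E}^\beta$ by \eqref{qaqa3}. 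The boundary term $\int_{\partial\Omega}u\cdot A\cdot u\,dS$ sits inside the time derivative, so it is handled after integration in time. The trace theorem bounds it by $\epsilon\|\nabla u\|_{L^2}^2+C\|u\|_{L^2(\Omega_0)}^2$, and
$$\|u\|_{L^2(\Omega_0)}\le \|\rho^{\frac{\delta-1}2}\|_{L^\infty(\Omega_0)}\|\rho^{\frac{1-\delta}2}u\|_{L^2}.$$
This is controlled using \eqref{rho_lb} together with Lemma \ref{L3.1}. Lemma \ref{l22} then converts $\|\curl u\|_{L^2}$ and $\|\div u\|_{L^2}$ back into $\|\nabla u\|_{L^2}$, again at the cost of $\|u\|_{L^2(\Omega_0)}$.

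\textbf{Step 2 (magnetic field).} Multiply $\eqref{9}_3$ by $H_t$. Since $-\Delta H=\nabla\times\curl H$ (because $\div H=0$) and $\curl H\times n=0$ on $\partial\Omega$ while $H=0$ on $\partial B_R$, the boundary terms vanish. This gives
$$\frac{\eta}{2}\frac{d}{dt}\|\curl H\|_{L^2}^2+\|H_t\|_{L^2}^2\le C\big(\|u\|_{L^\infty}\|\nabla H\|_{L^2}+\|H\|_{L^\infty}\|\nabla u\|_{L^2}\big)\|H_t\|_{L^2}\le \tfrac12\|H_t\|_{L^2}^2+C\mathcal{E}^\beta.$$
By Lemma \ref{l22}, $\|\nabla H\|_{L^2}\le C(\|\curl H\|_{L^2}+\|H\|_{L^2})$, and $\|H\|_{L^2}$ is bounded by Lemma \ref{L3.1}.

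\textbf{Step 3 (integration).} Add the two inequalities and integrate over $(0,t)$. The initial values are bounded by $CC_0^\beta$, which yields \eqref{e310}.

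The main obstacle is the singular product $\rho^{\delta-1}\psi\cdot\mathcal{S}(u)$ in Step 1, together with the boundary term $\int_{\partial\Omega}u\cdot A\cdot u\,dS$. Both depend on the positive lower bound \eqref{rho_lb} near $\partial\Omega$ and on the $L^6$ bound on $\nabla\rho^{\frac{\delta-1}{2}}$ contained in $\mathcal{E}$. Truncation onto $\Omega_0$ may be needed so that Lemma \ref{l22} applies on the multiply connected domain $\Omega_R$.
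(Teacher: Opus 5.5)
Your multipliers are the paper's: $-\rho^{\delta-1}\mathcal{L}u$ for \eqref{9}$_2$ and $H_t$ for \eqref{9}$_3$. Your interior bounds in Steps 1--2 are fine, and rewriting the boundary term as $\frac{\mu}{2}\frac{d}{dt}\int_{\partial\Omega}u\cdot A\cdot u\,dS$ is legitimate, since $A$ is symmetric and time-independent.

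The step that fails as written is the control of $\|u(t)\|_{L^2(\Omega_0)}$ at the \emph{current} time $t$. Your route needs this quantity twice:
\begin{itemize}
\item for the endpoint boundary term, after the trace bound $|\int_{\partial\Omega}u\cdot A\cdot u\,dS|\le\epsilon\|\nabla u\|_{L^2}^2+C\|u\|_{L^2(\Omega_0)}^2$;
\item for passing from $\curl u,\div u$ to $\nabla u$ via Lemma \ref{l22}.
\end{itemize}
You bound it by $\|\rho^{\frac{\delta-1}{2}}\|_{L^\infty(\Omega_0)}\|\rho^{\frac{1-\delta}{2}}u\|_{L^2}$ and cite \eqref{rho_lb}. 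But \eqref{rho_lb} only gives a factor $C\mathcal{E}^\beta(t)$ evaluated at time $t$. The right side of \eqref{e310} may contain only $CC_0^\beta$ and $\int_0^t\mathcal{E}^\beta ds$. A term of the form $C\mathcal{E}^\beta(t)\|\rho^{\frac{1-\delta}{2}}u(t)\|_{L^2}^2$ cannot be absorbed, because $\|\rho^{\frac{1-\delta}{2}}u(t)\|_{L^2}$ is of size $C_0$ and not small. Nor can it be fed into the closing argument of Proposition \ref{p1}, which needs $\mathcal{E}^2(t)\le C\exp\big(C\int_0^t\mathcal{E}^\beta ds\big)$.

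The paper never evaluates the boundary term at an endpoint. In \eqref{trace2} it estimates $\int_{\partial\Omega}u\cdot A\cdot u_t\,dS\le C\||u||u_t|\|_{W^{1,1}(\Omega_0)}\le C\mathcal{E}^\beta+C\|\nabla u_t\|_{L^2}^2$ and integrates in time using Lemma \ref{L3.3}. The Lemma \ref{l22} step has the same weight-removal issue in the paper. It is settled only in \eqref{dsxc}, using the time-integrated bound on $\|\rho^{\frac{\delta-1}{2}}\|_{W^{1,6}(\Omega_0)}$ from Lemma \ref{L3.6}, which gives $\sup_s\|\nabla u\|_{L^2}^2\le C\exp\big(C\int_0^t\mathcal{E}^\beta ds\big)$.

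A direct repair that keeps your route is to use \eqref{rho_lb} only under the time integral. From $u(t)=u_0+\int_0^t u_t\,ds$ you get $\|u(t)\|_{L^2(\Omega_0)}\le\|u_0\|_{L^2(\Omega_0)}+\int_0^t\|\rho^{\frac{\delta-1}{2}}\|_{L^\infty(\Omega_0)}\|\rho^{\frac{1-\delta}{2}}u_t\|_{L^2}ds\le CC_0^\beta+C\int_0^t\mathcal{E}^\beta ds$. Squaring with Cauchy--Schwarz, for $t\le T_1\le 1$ and after enlarging $\beta$, bounds $\sup_s\|u(s)\|_{L^2(\Omega_0)}^2$ in the admissible form. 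With that bound, both your endpoint boundary term and the Lemma \ref{l22} conversion close exactly as in \eqref{e310}.
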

\begin{proof}
First, multiplying $\eqref{9}_2$ by $-2\rho^{\delta-1}\mathcal{L}u$ and integrating the resulting equality by parts leads to
\begin{equation}
\begin{split}\label{za3.4}
&\frac{d}{dt}\left(\mu\|\mathrm{curl}u\|_{L^2}^2+(2\mu+\lambda)\|\mathrm{div}u\|_{L^2}^2\right)
+2\|\rho^\frac{\delta-1}{2}\mathcal{L}u\|_{L^2}^2+2\mu\int_{\partial\Omega} u\cdot A\cdot u_tdS\\
&\leq C\int\Big(\rho^\frac{1-\delta}{2}|u||\nabla u|+|\nabla\rho^{\gamma-\frac{1+\delta}{2}}|
+|\nabla\rho^\frac{\delta-1}{2}||\nabla u|+|J||\nabla H|\Big)|\rho^\frac{\delta-1}{2}\mathcal{L}u|dx\\
&\leq \|\rho^\frac{\delta-1}{2}\mathcal{L}u\|_{L^2}^2+C\Big(\|\rho\|_{L^\infty}^{1-\delta}\|u\|_{L^\infty}^2\|\nabla u\|_{L^2}^2
+\|\nabla\rho^{\gamma-\frac{1+\delta}{2}}\|_{L^2}^2+\|\nabla\rho^\frac{\delta-1}{2}\|_{L^{6}}^2\|\nabla u\|_{L^3}^2\\
&\quad+\|J\|_{L^6}^2\|\nabla H\|_{L^3}^2\Big)\\
&\leq \|\rho^\frac{\delta-1}{2}\mathcal{L}u\|_{L^2}^2+C\mathcal{E}^\beta,
\end{split}
\end{equation}
where we used Lemma \ref{G-N}, \eqref{qaqa}, \eqref{rho_sup}, and \eqref{qaqa3}.
Similar to \eqref{trace1}, the boundary term on the right hand of \eqref{za3.4} can be governed as
\begin{align}\label{trace2}
\int_{\partial\Omega} u\cdot A\cdot u_tdS\leq C\||u||u_t|\|_{W^{1,1}(\Omega_0)}  &\leq C\mathcal{E}^\beta+C\|\nabla u_t\|_{L^2}^2.
\end{align}
Integrating \eqref{za3.4} over $(0,t)$, together with  \eqref{trace2} and \eqref{e313} yields 
\begin{equation}
\begin{split}\nonumber
\sup_{0\leq s\leq t}\big(\|\mathrm{curl}u\|_{L^2}^2+\|\mathrm{div}u\|_{L^2}^2\big)+\int_0^t\|\rho^\frac{\delta-1}{2}\mathcal{L}u\|_{L^2}^2ds
\leq CC_0^\beta+C\int_0^t\mathcal{E}^\beta ds.
\end{split}
\end{equation}

Next, we multiply $\eqref{9}_3$ by $2H_t$ and integrate the resulting equality by parts, then the following holds
$$
\begin{aligned}
&\eta\frac{d}{dt}\|\mathrm{curl}H\|_{L^2}^2+2\| H_t\|_{L^2}^2\\
&\leq C\int(|\nabla u||H||H_t|+|u||\nabla H||H_t|)dx\\
&\leq C\|\nabla u\|_{L^3}\|H\|_{L^6}\|H_t\|_{L^2}+\|u\|_{L^6}\|\nabla H\|_{L^3}\|H_t\|_{L^2}\\
&\leq C\mathcal{E}^\beta,
\end{aligned}
$$
where we used Lemma \ref{G-N}, \eqref{qaqa}, and \eqref{qaqa3}.
Integrating the resulting inequality over $(0,t)$, we obtain \eqref{e310} after using \eqref{e34} and Lemma \ref{l22}.
The proof of Lemma \ref{L3.2} is completed.
\end{proof}
The following lemma is concerning the  estimate of $\rho$ with positive power $\gamma-\frac{1+\delta}{2}>0$.
\begin{lemma}\label{L3.5}
	Let $(\rho,u,H,J)$ and $T_1$ be as in Lemma \ref{L3.1}. Then for all $t\in (0,T_1]$,
	\begin{equation}\label{e336}
		\sup_{0\leq s\leq t} \|\rho^{\gamma-\frac{1+\delta}{2}}\|_{W^{1,6}\cap D^1\cap D^2}\leq CC_0^\beta+C\int_0^t\mathcal{E}^\beta ds.
	\end{equation}
\end{lemma}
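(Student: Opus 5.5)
Set $\varphi\triangleq\rho^{\gamma-\frac{1+\delta}{2}}$ and $\kappa\triangleq\gamma-\frac{1+\delta}{2}>0$. From $\eqref{9}_1$, $\varphi$ solves the transport equation
\begin{equation*}
\varphi_t+u\cdot\nabla\varphi+\kappa\varphi\,\mathrm{div}u=0 .
\end{equation*}
We will derive differential inequalities for $\|\varphi\|_{L^6}$, $\|\nabla\varphi\|_{L^2\cap L^6}$ and $\|\nabla^2\varphi\|_{L^2}$ by multiplying by $|\varphi|^4\varphi$, $\nabla$ of the equation by $|\nabla\varphi|^{p-2}\nabla\varphi$ ($p=2,6$), and $\nabla^2$ of the equation by $\nabla^2\varphi$. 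In each case we integrate by parts the transport term $u\cdot\nabla(\cdot)$. The boundary contributions vanish: on $\partial\Omega$ we have $u\cdot n=0$, and on $\partial B_R$ we have $u\cdot n=0$ by $\eqref{9}_7$. Hence each $\int u\cdot\nabla|f|^p\,dx$ reduces to $-\int\mathrm{div}u\,|f|^p\,dx$.

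\textbf{Zeroth and first order.} The $L^6$ step gives $\frac{d}{dt}\|\varphi\|_{L^6}\le C\|\nabla u\|_{L^\infty}\|\varphi\|_{L^6}$. For $p\in\{2,6\}$ the gradient step gives
\begin{equation*}
\frac{d}{dt}\|\nabla\varphi\|_{L^p}\le C\|\nabla u\|_{L^\infty}\|\nabla\varphi\|_{L^p}+C\|\varphi\|_{L^\infty}\|\nabla^2u\|_{L^p}.
\end{equation*}

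\textbf{Second order.} The $D^2$ step gives
\begin{equation*}
\frac{d}{dt}\|\nabla^2\varphi\|_{L^2}\le C\|\nabla u\|_{L^\infty}\|\nabla^2\varphi\|_{L^2}+C\|\nabla\varphi\|_{L^6}\|\nabla^2u\|_{L^3}+C\|\varphi\|_{L^\infty}\|\nabla^3u\|_{L^2}.
\end{equation*}
By \eqref{rho_sup}, $\|\varphi\|_{L^\infty}\le C\|\varphi\|_{W^{1,6}}\le C\mathcal{E}$.

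\textbf{Velocity norms.} All right-hand sides are bounded by $C\mathcal{E}^\beta\bigl(1+\|\nabla u\|_{W^{1,6}}+\|\nabla^3u\|_{L^2}\bigr)$. By Gagliardo–Nirenberg and \eqref{qaqa3}, $\|\nabla u\|_{L^\infty}+\|\nabla^2 u\|_{L^3}+\|\nabla^2u\|_{L^6}\le C\mathcal{E}^\beta+C\|\nabla^3 u\|_{L^2}$. The key step, and the main obstacle, is to bound $\|\nabla^3u\|_{L^2}$, since $\mathcal{E}$ contains only $\nabla^2 u$. We use Lemma \ref{elliptic_estimate} with $k=1$ applied to $\mathcal{L}u=f$, where $f$ is read off from $\eqref{9}_2$. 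This gives
\begin{equation*}
\|\nabla^3u\|_{L^2}\le C\|\mathcal{L}u\|_{H^1}+C\|\nabla u\|_{L^2}.
\end{equation*}
Write $\mathcal{L}u=\rho^{\frac{1-\delta}{2}}\cdot\rho^{\frac{\delta-1}{2}}\mathcal{L}u$. Since $\nabla\rho^{\frac{1-\delta}{2}}=-\rho^{1-\delta}\nabla\rho^{\frac{\delta-1}{2}}$ is bounded in $L^6$ by $C\mathcal{E}^\beta$ by \eqref{rho_sup}, we get
\begin{equation*}
\|\nabla^3u\|_{L^2}\le C\mathcal{E}^\beta\bigl(1+\|\rho^{\frac{\delta-1}{2}}\mathcal{L}u\|_{H^1}\bigr),
\end{equation*}
using Gagliardo–Nirenberg for the $L^3$ factor of $\rho^{\frac{\delta-1}{2}}\mathcal{L}u$.

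This time-integrable quantity is not in $\mathcal{E}$, so we cannot close with $\int\mathcal{E}^\beta$ unless we estimate it differently. We plan to avoid it as follows. Expand $\|\nabla(\rho^{1-\delta}u_t)\|_{L^2}$, $\|\nabla(\psi\cdot\mathcal{S}(u))\|_{L^2}$, $\|\nabla\nabla\rho^{\gamma-\delta}\|_{L^2}$ and the magnetic terms in $\nabla f$. Each is bounded by $C\mathcal{E}^\beta\bigl(1+\|\nabla u_t\|_{L^2}+\|\nabla^2 J\|_{L^2}+\|\nabla^3 H\|_{L^2}\bigr)$ plus a small multiple of $\|\nabla^3u\|_{L^2}$, which is absorbed on the left. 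Here we use $\nabla\psi\in L^2$ via $\nabla^2\rho^{\frac{\delta-1}{2}}$, and also $\|\nabla^2J\|_{L^2}\le C\mathcal{E}^\beta+C\|J_t\|_{L^2}$ from \eqref{muy01}. The term $\nabla^2\rho^{\gamma-\delta}$ is linear in $\nabla^2\varphi$, which is harmless because it appears multiplied by $\|\varphi\|_{L^\infty}$ inside an integrated bound.

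\textbf{Integration and Gronwall.} Summing the differential inequalities gives
\begin{equation*}
\frac{d}{dt}\|\varphi\|_{W^{1,6}\cap D^1\cap D^2}\le C\mathcal{E}^\beta\bigl(1+\|\nabla u_t\|_{L^2}+\|J_t\|_{L^2}+\|\nabla^3H\|_{L^2}\bigr).
\end{equation*}
The factors $\|\nabla u_t\|_{L^2}$ and $\|J_t\|_{L^2}$ are square integrable by Lemma \ref{L3.3}. For $\|\nabla^3H\|_{L^2}$, elliptic regularity on $\eqref{9}_3$ gives control by $\|\nabla H_t\|_{L^2}+C\mathcal{E}^\beta$, and $\nabla H_t$ is also square integrable by Lemma \ref{L3.3}. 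Integrating in time, applying Cauchy–Schwarz ($ab\le a^2+b^2$), and inserting \eqref{e313} yields
\begin{equation*}
\sup_{0\le s\le t}\|\varphi\|_{W^{1,6}\cap D^1\cap D^2}\le CC_0^\beta+C\int_0^t\mathcal{E}^\beta\,ds,
\end{equation*}
after enlarging $\beta$. This is \eqref{e336}.
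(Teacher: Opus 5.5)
Your proposal is correct and follows the paper's proof. Both run transport estimates for $\varphi=\rho^{\gamma-\frac{1+\delta}{2}}$ in $L^6$, $D^1\cap D^{1,6}$ and $D^2$, control $\|\nabla^3u\|_{L^2}$ via Lemma \ref{elliptic_estimate} with $k=1$ by differentiating the right-hand side of $\eqref{9}_2$ (this is \eqref{g3u}), and close by integrating in time with the $L^2_t$ bounds on $\nabla u_t$ and $J_t$ from Lemma \ref{L3.3}. The only difference is cosmetic: you send $\rho^{\frac{1-\delta}{2}}J\cdot\nabla^2H$ through $\|\nabla^3H\|_{L^2}$ and an extra elliptic estimate for $\eqref{9}_3$, whereas the paper bounds it by $\|J\|_{L^\infty}\|\nabla^2H\|_{L^2}$ with \eqref{muy01}, so $\nabla^3H$ never appears.
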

\begin{proof}
First, it follows from Lemma \ref{G-N}, \ref{elliptic_estimate}, \eqref{9}$_2$, \eqref{9}$_5$-\eqref{9}$_7$, \eqref{rho_sup}, \eqref{qaqa1}, and \eqref{qaqa3} that
\begin{equation}\nonumber
\begin{split}
&\|\nabla^3u\|_{L^2}\leq C\|\mathcal{L}u\|_{H^1}+C\|\nabla u\|_{L^2}\\
&\leq C\Big(\|\nabla\rho^\frac{1-\delta}{2}\|_{L^{6}}\|\rho^\frac{1-\delta}{2}u_t\|_{L^{3}}
+\|\rho^{1-\delta}\|_{L^\infty}\|\nabla u_t\|_{L^2}+\|\nabla\rho^{1-\delta}\|_{L^{6}}\|u\|_{L^6}\|\nabla u\|_{L^{6}}\\
&\quad+\|\rho^{1-\delta}\|_{L^\infty}\|\nabla u\|_{L^3}\|\nabla u\|_{L^6}
+\|\rho^{1-\delta}\|_{L^\infty}\|u\|_{L^\infty}\|\nabla^2u\|_{L^2}+\|\nabla^2\rho^{\gamma-\delta}\|_{L^2}\\
&\quad+\|\rho^\frac{1-\delta}{2}\|_{L^\infty}\|\nabla^2\rho^\frac{\delta-1}{2}\|_{L^{2}}\|\nabla u\|_{L^{\infty}}+\|\nabla\rho^\frac{1-\delta}{2}\|_{L^{6}}\|\nabla\rho^\frac{\delta-1}{2}\|_{L^{6}}\|\nabla u\|_{L^{6}}\\
&\quad+\|\rho^\frac{1-\delta}{2}\|_{L^\infty}\|\nabla\rho^\frac{\delta-1}{2}\|_{L^{6}}\|\nabla^2u\|_{L^{3}}
+\|\nabla\rho^\frac{1-\delta}{2}\|_{L^{6}}\|J\|_{L^6}\|\nabla H\|_{L^6}\\
&\quad+\|\rho^\frac{1-\delta}{2}\|_{L^{\infty}}\|\nabla J\|_{L^3}\|\nabla H\|_{L^6}+\|\rho^\frac{1-\delta}{2}\|_{L^{\infty}}\|J\|_{L^\infty}\|\nabla^2H\|_{L^{2}}\Big)+C\mathcal{E}^\beta\\
&\leq C\Big(\|\rho\|_{L^\infty}^{(1-\delta)5/4}
\|\nabla\rho^{\frac{\delta-1}{2}}\|_{L^{6}}
\|\rho^\frac{1-\delta}{2}u_t\|_{L^2}^{1/2}\|\nabla u_t\|_{L^2}^{1/2}+\|\rho\|_{L^\infty}^{1-\delta}\|\nabla u_t\|_{L^2}\\
&\quad+\|\rho\|_{L^\infty}^{3(1-\delta)/2}\|\nabla\rho^{\frac{\delta-1}{2}}\|_{L^{6}}
\|\nabla u\|_{H^1}^2+\|\rho\|_{L^\infty}^{1-\delta}
\|\nabla u\|_{H^1}^2
+\|\rho\|_{L^\infty}^{(1-\delta)/2}\|\nabla^2\rho^{\gamma-\frac{1+\delta}{2}}\|_{L^2}\\ &\quad+\|\rho\|_{L^\infty}^{1-\delta}\|\nabla\rho^{\gamma-\frac{1+\delta}{2}}\|_{L^2}\|\nabla\rho^{\frac{\delta-1}{2}}\|_{L^{6}}+\|\rho\|_{L^\infty}^{(1-\delta)/2}
\|\nabla\rho^{\frac{\delta-1}{2}}\|_{L^{6}\cap D^{1}}\|\nabla u\|_{H^1}^{1/2}\|\nabla u\|_{H^2}^{1/2}\\
&\quad+\|\rho\|_{L^\infty}^{1-\delta}\|\nabla\rho^{\frac{\delta-1}{2}}\|_{L^{6}}^2\|\nabla u\|_{H^1}
+\|\rho^{1-\delta}\|_{L^{\infty}}\|\nabla\rho^\frac{\delta-1}{2}\|_{L^{6}}\|\nabla J\|_{L^2}\|\nabla H\|_{H^1}\\
&\quad+\|\rho^\frac{1-\delta}{2}\|_{L^{\infty}}\|\nabla J\|_{L^2}^{1/2}\|\nabla J\|_{H^1}^{1/2}\|\nabla H\|_{H^1}
\Big)+C\mathcal{E}^\beta\\
&\leq C\mathcal{E}^\beta+C\mathcal{E}^\beta\|\nabla u_t\|_{L^2}+C\mathcal{E}^\beta\|J_t\|_{L^2}^{1/2}+\frac{1}{2}\|\nabla^3u\|_{L^2},
\end{split}
\end{equation}
which directly yields that
\begin{equation}\label{g3u}
\begin{split}
	\|\nabla^3u\|_{L^2} \leq C\mathcal{E}^\beta+C\mathcal{E}^\beta\|\nabla u_t\|_{L^2}+C\mathcal{E}^\beta\|J_t\|_{L^2}^{1/2}.
\end{split}
\end{equation}

Next, multiplying the mass equation $\eqref{9}_1$ by $(\gamma-\frac{1+\delta}{2})\rho^{\gamma-\frac{1+\delta}{2}-1}$, we find that $\varphi\triangleq\rho^{\gamma-\frac{1+\delta}{2}}$ satisfies
\begin{equation}\label{310}
\varphi_t+u\cdot\nabla\varphi+(\gamma-\frac{1+\delta}{2})\varphi\mathrm{div}u=0.
\end{equation}
Integrating \eqref{310} multiplied by $\varphi^5$   on $\Omega_R$ leads to
\begin{equation}\label{e337}
\frac{d}{dt}\|\varphi\|_{L^6}^6
\leq C\|\mathrm{div} u\|_{L^\infty}\|\nabla\varphi\|_{L^6}^6.
\end{equation}
Operating  $\nabla$ to \eqref{310}, one obtains after multiplying the resulting equality by $|\nabla\varphi|^{r-2}\nabla\varphi$ with $r=2,6$  and integrating by parts on $\Omega_R$ that
\begin{equation}\label{e339}
\begin{split}
	\frac{d}{dt}\|\nabla\varphi\|_{L^r}^r
	\leq& C\int|\nabla\varphi|^r|\nabla u|+|\varphi||\nabla\varphi|^{r-1}|\nabla\mathrm{div}u|dx\\
	\leq&C\bigl(\|\nabla u\|_{L^\infty}\|\nabla\varphi\|_{L^r}^r+\|\varphi\|_{L^\infty}\|\nabla\varphi\|_{L^r}^{r-1}\|\nabla\mathrm{div}u\|_{L^r}\bigr).
\end{split}
\end{equation}
Furthermore, operating  $\nabla^2$ to \eqref{310}, multiplying it by $\nabla^2\varphi$ and integrating by parts on $\Omega_R$, one has
\begin{equation}\label{e340}
\begin{split}
\frac{d}{dt}\|\nabla^2\varphi\|_{L^2}^2
\leq& C\int\bigl(|\nabla^2\varphi|^2|\nabla u|+|\nabla\varphi||\nabla^2\varphi||\nabla^2u|+|\varphi||\nabla^2\varphi||\nabla^2\mathrm{div}u|\bigr)dx\\
\leq&C\bigl(\|\nabla u\|_{L^\infty}\|\nabla^2\varphi\|_{L^2}^2+\|\nabla\varphi\|_{L^6}\|\nabla^2\varphi\|_{L^2}\|\nabla^2u\|_{L^3}\\
&+\|\varphi\|_{L^\infty}\|\nabla^2\varphi\|_{L^2}\|\nabla^3u\|_{L^2}\bigr).
\end{split}
\end{equation}

Finally, we deduce from \eqref{e337}-\eqref{e340}, \eqref{g3u}, and Gagliardo-Nirenberg inequality that
\begin{equation}\label{260129}
\frac{d}{dt}\|\varphi\|_{W^{1,6}\cap D^1\cap D^2}\leq C\|\nabla u\|_{H^2}\|\varphi\|_{W^{1,6}\cap D^1\cap D^2}
\leq C\mathcal{E}^\beta+C\|\nabla u_t\|_{L^2}^2+C\|J_t\|_{L^2}^2.
\end{equation}
Thus, \eqref{e336} is derived from integrating \eqref{260129} over $(0,t)$ and using \eqref{e313}.
The proof of Lemma \ref{L3.5} is finished.
\end{proof}

Now, we are in a position to derive the crucial estimates on $\rho^\frac{\delta-1}{2}$.
\begin{lemma}\label{L3.6}
	Let $(\rho,u,H,J)$ and $T_1$ be as in Lemma \ref{L3.1}. Then for all $t\in (0,T_1]$,
	\begin{equation}\label{e343}
		\sup_{0\leq s\leq t}\bigr(\|\rho^\frac{\delta-1}{2}\|_{L^6(\Omega_0)}
        +\|\nabla\rho^\frac{\delta-1}{2}\|_{L^{6}\cap D^{1,2}}\bigr)
		\leq CC_0^\beta+C\int_0^t\mathcal{E}^\beta ds.
	\end{equation}
\end{lemma}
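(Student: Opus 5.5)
The plan is to work with $\phi\triangleq\rho^{\frac{\delta-1}{2}}$. Multiplying $\eqref{9}_1$ by $\frac{\delta-1}{2}\rho^{\frac{\delta-3}{2}}$ gives the transport equation
\begin{equation*}
\phi_t+u\cdot\nabla\phi-\frac{1-\delta}{2}\phi\,\mathrm{div}u=0 .
\end{equation*}

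\textbf{Step 1: the $L^6(\Omega_0)$ bound on $\phi$.} Because $u\cdot n=0$ on $\partial\Omega$, the characteristics through $\partial\Omega$ stay on it. To avoid worrying about the outer part of $\partial\Omega_0$, I multiply by $\chi^6\phi^5$, where $\chi$ is a cutoff equal to $1$ on $\Omega_0$ and supported in $\Omega_{3R_0}$. This gives
\begin{equation*}
\frac{d}{dt}\|\chi\phi\|_{L^6}^6\le C\|\nabla u\|_{L^\infty}\|\chi\phi\|_{L^6}^6+C\|u\|_{L^\infty}\|\phi\|^6_{L^6(\Omega_{3R_0})}.
\end{equation*}
It then remains to control $\phi$ on the larger annulus. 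I would either include $\|\phi\|_{L^6(\Omega_{3R_0})}$ in the bookkeeping, which is harmless since $\nabla\phi\in L^6$, or write $\phi(x)=\phi(y)+\int\nabla\phi$ to get $\|\phi\|_{L^6(\Omega_{3R_0})}\le C(\|\phi\|_{L^6(\Omega_0)}+\|\nabla\phi\|_{L^6})$. Finally I use $\|\nabla u\|_{L^\infty}\le C\|\nabla u\|_{H^2}\le C\mathcal{E}^\beta(1+\|\nabla u_t\|_{L^2}+\|J_t\|_{L^2})$ from \eqref{qaqa3} and \eqref{g3u}, together with \eqref{e313}.

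\textbf{Step 2: the $L^6$ and $L^2$ bounds on $\nabla\phi$.} Applying $\nabla$ to the transport equation gives
\begin{equation*}
(\nabla\phi)_t+u\cdot\nabla\nabla\phi+\nabla u\cdot\nabla\phi-\tfrac{1-\delta}{2}\nabla\phi\,\mathrm{div}u=\tfrac{1-\delta}{2}\phi\nabla\mathrm{div}u .
\end{equation*}
The right-hand side is the singular term, since $\phi$ is unbounded at infinity. Exactly as in \cite{lly26ns}, I would control $\phi\nabla\mathrm{div}u=\rho^{\frac{\delta-1}{2}}\nabla\mathrm{div}u$ through the elliptic structure. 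Since $\mathcal{L}u=(2\mu+\lambda)\nabla\mathrm{div}u-\mu\,\mathrm{curl}\,\mathrm{curl}u$, I first rewrite $\eqref{9}_2$ as
\begin{equation*}
\mathcal{L}u=\rho^{1-\delta}(u_t+u\cdot\nabla u)+\tfrac{a\gamma}{\gamma-\delta}\nabla\rho^{\gamma-\delta}-\delta\psi\cdot\mathcal S(u)-\rho^{\frac{1-\delta}2}(J\cdot\nabla H-\nabla H\cdot J).
\end{equation*}
Multiplying by $\phi$ then gives $\phi\mathcal{L}u$ in terms of $\rho^{\frac{1-\delta}{2}}u_t$, $\rho^{\frac{1-\delta}{2}}u\cdot\nabla u$, $\nabla\rho^{\gamma-\frac{1+\delta}2}$, $\nabla\phi\cdot\mathcal S(u)$, and $J\cdot\nabla H$ and $\nabla H\cdot J$. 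None of these carries a negative power of $\rho$, and that is precisely why $J$ was introduced. To separate $\phi\nabla\mathrm{div}u$ from $\phi\,\mathrm{curl}\,\mathrm{curl}u$, I would use the div--curl estimate (Lemma \ref{l22}) applied to $\phi\,\mathrm{curl}u$, that is, the estimates \eqref{zsa1}--\eqref{zsa2} of the cited work, with the boundary terms handled through the cutoff and \eqref{rho_lb}. This yields
\begin{equation*}
\|\phi\nabla\mathrm{div}u\|_{L^2\cap L^6}\le C\|\phi\mathcal{L}u\|_{H^1}+C\mathcal{E}^\beta(1+\|\nabla^3u\|_{L^2}).
\end{equation*}
Taking the $H^1$ norm of the representation above, the only new term is $\nabla(J\cdot\nabla H)$. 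I bound it by $\|\nabla J\|_{L^3}\|\nabla H\|_{L^6}+\|J\|_{L^\infty}\|\nabla^2H\|_{L^2}$, which by \eqref{muy01} is at most $C\mathcal{E}^\beta(1+\|J_t\|_{L^2})$. The terms $\nabla(\rho^{\frac{1-\delta}2}u_t)$ are bounded by $\|\nabla u_t\|_{L^2}$ plus $\|\nabla\rho^{\frac{1-\delta}2}\|_{L^6}\|\rho^{\frac{1-\delta}2}u_t\|_{L^3}$. Multiplying the gradient equation by $|\nabla\phi|^{r-2}\nabla\phi$ with $r=2,6$, I obtain
\begin{equation*}
\frac{d}{dt}\|\nabla\phi\|_{L^r}\le C\|\nabla u\|_{L^\infty}\|\nabla\phi\|_{L^r}+C\|\phi\nabla\mathrm{div}u\|_{L^r}.
\end{equation*}
The right-hand side is bounded by $C\mathcal{E}^\beta(1+\|\nabla u_t\|^2_{L^2}+\|J_t\|_{L^2}^2)$, and its time integral is controlled by \eqref{e313}.

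\textbf{Step 3: the $D^1$ bound on $\nabla\phi$ (main obstacle).} I apply $\nabla^2$ to the transport equation and test against $\nabla^2\phi$. The worst term is $\phi\nabla^2\mathrm{div}u$, which requires a bound on $\|\phi\nabla\mathrm{div}u\|_{H^1}$ and hence on $\|\phi\mathcal{L}u\|_{H^1}$ in $L^2_t$. The terms $\nabla\phi\,\nabla\mathrm{div}u$ are fine by Step 2. The remaining terms $\nabla^2u\cdot\nabla\phi$ and $\nabla u\,\nabla^2\phi$ are bounded using $\|\nabla^2u\|_{L^3}$ and $\|\nabla u\|_{L^\infty}$. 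Integrating in time and applying Gronwall-type bookkeeping, with all terms bounded by $CC_0^\beta+C\int_0^t\mathcal{E}^\beta ds$, gives \eqref{e343}. The delicate point is that the commutator from $\phi\,\mathrm{curl}$ at the boundary $\partial\Omega$ must be handled via the cutoff and the lower bound \eqref{rho_lb} near $\partial\Omega$. I expect this to be where the argument requires the most care.
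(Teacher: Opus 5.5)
Your overall architecture matches the paper's. You use the transport equation \eqref{38} for $\Phi=\rho^{\frac{\delta-1}{2}}$ and energy estimates for $\|\Phi\|_{L^6(\Omega_0)}$, $\|\nabla\Phi\|_{L^6}$ and $\|\nabla^2\Phi\|_{L^2}$. You reduce these to bounds on $w=\rho^{\frac{\delta-1}{2}}\nabla\mathrm{div}u$ in $L^6\cap D^1$, and then, via \eqref{zsa1}, to $\rho^{\frac{\delta-1}{2}}\mathcal{L}u$ in $H^1$. You bound the magnetic terms by $\|\nabla J\|_{L^3}\|\nabla H\|_{L^6}+\|J\|_{L^\infty}\|\nabla^2H\|_{L^2}$ and \eqref{muy01}, exactly as in \eqref{e349}.

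The gap is the step you defer: extracting $w$ from $\rho^{\frac{\delta-1}{2}}\mathcal{L}u$. With the weight, the two parts of $\mathcal{L}u=(2\mu+\lambda)\nabla\mathrm{div}u-\mu\,\mathrm{curl}\,\mathrm{curl}u$ are no longer orthogonal. The cross term $\int\rho^{\delta-1}\nabla\mathrm{div}u\cdot\mathrm{curl}\,\mathrm{curl}u\,dx$ must be integrated by parts. This produces an absorbable interior term $\int\nabla\rho^{\delta-1}\cdot\mathrm{curl}\,\mathrm{curl}u\,\mathrm{div}u\,dx$ and the boundary integral $\int_{\partial\Omega_R}\rho^{\delta-1}\mathrm{div}u\,\mathrm{curl}\,\mathrm{curl}u\cdot n\,dS$. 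On $\partial\Omega$ the lower bound \eqref{rho_lb} handles it. However, $\partial\Omega_R$ also contains the outer sphere $\partial B_R$, where $\rho$ is close to far-field vacuum and $\rho^{\delta-1}$ has no bound uniform in $R$. Neither a cutoff nor \eqref{rho_lb} helps there, and you locate the delicate boundary only at $\partial\Omega$.

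The missing ingredient is \eqref{curlu}: the outer condition $\mathrm{curl}u\times n=0$ on $\partial B_R$ forces $\mathrm{curl}\,\mathrm{curl}u\cdot n=0$ there. So the outer integral vanishes, which gives \eqref{114}.

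The same fact is needed again for the $D^1$ bound \eqref{98y}. There the div--curl estimate \eqref{uwkq1} for $\rho^{\frac{\delta-1}{2}}\mathrm{curl}\,\mathrm{curl}u$ contains $\|\rho^{\frac{\delta-1}{2}}\mathrm{curl}\,\mathrm{curl}u\cdot n\|_{H^{1/2}(\partial\Omega_R)}$, whose $\partial B_R$ part is again killed by \eqref{curlu}. Its curl is controlled because the term $(2\mu+\lambda)\nabla(\rho^{\frac{\delta-1}{2}}\mathrm{div}u)$ in \eqref{zsa2} is a gradient. Its divergence is the lower-order term $\nabla\rho^{\frac{\delta-1}{2}}\cdot\mathrm{curl}\,\mathrm{curl}u$.

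Your ``div--curl applied to $\phi\,\mathrm{curl}u$'' might mean writing $\phi\mathcal{L}u=(2\mu+\lambda)\nabla(\phi\,\mathrm{div}u)-\mu\,\mathrm{curl}(\phi\,\mathrm{curl}u)$ plus lower-order terms. That route can also work, but again only because $\phi\,\mathrm{curl}u\times n=0$ on $\partial B_R$ makes the normal component of $\mathrm{curl}(\phi\,\mathrm{curl}u)$ vanish there. As written, your inequality for $\|\phi\nabla\mathrm{div}u\|_{L^2\cap L^6}$ and its $D^1$ analogue are the right targets, but they are asserted rather than proved.

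Two smaller points. First, drop the case $r=2$ in Step 2. $D^{1,2}$ in the statement refers to $\|\nabla^2\rho^{\frac{\delta-1}{2}}\|_{L^2}$, whereas $\|\nabla\rho^{\frac{\delta-1}{2}}\|_{L^2}$ is in neither $\mathcal{E}$ nor $C_0$. For $\rho_0=(1+|x|^{2\alpha})^{-1}$ one has $|\nabla\rho_0^{\frac{\delta-1}{2}}|\sim|x|^{\alpha(1-\delta)-1}\notin L^2$, so that bound is not uniform in $R$.

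Second, in Step 1 the cutoff is harmless but unnecessary. As in \eqref{popza}, one bounds $\int_{\Omega_0}|u||\nabla\Phi||\Phi|^5dx\le\|u\|_{L^\infty}\|\nabla\Phi\|_{L^6}\|\Phi\|_{L^6(\Omega_0)}^5$ by H\"older, without integrating by parts, so no boundary term on $\partial B_{2R_0}$ arises. Your pointwise bound $\|\phi\mathcal{L}u\|_{L^2}\le C\mathcal{E}^\beta$ from the equation is fine and is a legitimate substitute for the paper's use of \eqref{e310}.
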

\begin{proof}
First, one can deduce from  the boundary condition $\mathrm{curl}u\times n=0$ on $\partial B_R$ that
\begin{align}\label{curlu}
\mathrm{curlcurl}u\cdot n=0\,\ \text{on}\ \ \partial B_R.
\end{align}
Indeed,  for all $\vartheta\in C^\infty(\overline{\Omega_R})$ with $\vartheta=\nabla \vartheta=0$ on $\partial\Omega$, we have
\begin{equation}
\begin{split}\nonumber
    \int_{\partial B_R} \mathrm{curlcurl} u\cdot n \vartheta dS&=\int_{\partial \Omega_R} \mathrm{curlcurl} u\cdot n \vartheta dS
    =\int_{\Omega_R} \nabla\vartheta\cdot \mathrm{curlcurl} u dx\\
    &=-\int_{\Omega_R}\mathrm{div}\bigl( \nabla\vartheta\times \mathrm{curl} u\bigr) dx=-\int_{\partial \Omega_R}\bigl( \nabla\vartheta\times \mathrm{curl} u\bigr)\cdot n dS\\
    &=-\int_{\partial \Omega_R}\bigl( \mathrm{curl} u\times  n \bigr)\cdot \nabla\vartheta dS=\int_{\partial B_R} \bigl( \mathrm{curl} u\times  n \bigr)\cdot \nabla\vartheta dS=0,
\end{split}
\end{equation}
which thus yields directly  \eqref{curlu}.

Next, we obtain by direct calculations
\begin{equation}\label{3.32}
\begin{split}
&\|\rho^\frac{\delta-1}{2}\mathcal{L}u\|_{L^2}^2\\
&=(\lambda+2\mu)^2\|\rho^\frac{\delta-1}{2}\nabla \mathrm{div}u\|_{L^2}^2+\mu^2\|\rho^\frac{\delta-1}{2}\mathrm{curlcurl}u\|_{L^2}^2\\
&\quad-2\mu(\lambda+2\mu)\int_{\Omega_R}\rho^{\delta-1}\nabla\mathrm{div}u\cdot\mathrm{curlcurl}udx\\
&=(\lambda+2\mu)^2\|\rho^\frac{\delta-1}{2}\nabla \mathrm{div}u\|_{L^2}^2+\mu^2\|\rho^\frac{\delta-1}{2}\mathrm{curlcurl}u\|_{L^2}^2\\
&\quad-2\mu(\lambda+2\mu)\left(\int_{\partial\Omega_R}\rho^{\delta-1}\mathrm{div}u\mathrm{curlcurl}u\cdot ndS-\int_{\Omega_R}\nabla\rho^{\delta-1}\cdot\mathrm{curlcurl}u\mathrm{div}udx\right)\\
&\geq(\lambda+2\mu)^2\|\rho^\frac{\delta-1}{2}\nabla\mathrm{div}u\|_{L^2}^2+\mu^2/2\|\rho^\frac{\delta-1}{2}\mathrm{curlcurl}u\|_{L^2}^2
-C\mathcal{E}^\beta\||\nabla u||\nabla^2 u|\|_{W^{1,1}(\Omega_0)}\\
&\quad-C\|\nabla\rho^\frac{\delta-1}{2}\|_{L^{6}}^2\|\mathrm{div}u\|_{L^{3}}^2,
\end{split}
\end{equation}
where we have used \eqref{qaqa3}, \eqref{rho_lb}, \eqref{g3u}, and \eqref{curlu}.
Hence,  $w\triangleq\rho^\frac{\delta-1}{2}\nabla\mathrm{div}u$ and $\rho^\frac{\delta-1}{2}\mathrm{curlcurl}u$ satisfies the following estimates
\begin{equation}\label{114}
\|w\|_{L^2}+\|\rho^\frac{\delta-1}{2}\mathrm{curlcurl}u\|_{L^2}
\leq C\|\rho^\frac{\delta-1}{2}\mathcal{L}u\|_{L^2}+ C\mathcal{E}^\beta+C\|\nabla u_t\|_{L^2}+C\|J_t\|_{L^2}.
\end{equation}

Multiplying $\eqref{9}_2$ by $\rho^\frac{\delta-1}{2}$ gives that
\begin{equation}
	\begin{split}\label{zsa1}
\rho^\frac{\delta-1}{2}\mathcal{L} u=&\rho^\frac{1-\delta}{2}u_t+\rho^\frac{1-\delta}{2}u\cdot\nabla u+\frac{2\gamma a}{2\gamma-1-\delta}\nabla\rho^{\gamma-\frac{1+\delta}{2}}+\frac{2}{1-\delta}\nabla\rho^\frac{\delta-1}{2}\cdot\mathcal{S}(u)\\
&-J\cdot\nabla H+\nabla H\cdot J,
    \end{split}
\end{equation}
and thus
\begin{equation}
\begin{split}\label{zsa2}
\mu\rho^\frac{\delta-1}{2}\curl\curl u=&-\rho^\frac{1-\delta}{2}u_t-\rho^\frac{1-\delta}{2}u\cdot\nabla u-\frac{2\gamma a}{2\gamma-1-\delta}\nabla\rho^{\gamma-\frac{1+\delta}{2}}-\frac{2}{1-\delta}\nabla\rho^\frac{\delta-1}{2}\cdot\mathcal{S}(u)\\
&+(2\mu+\lambda)\nabla\big(\rho^\frac{\delta-1}{2}\div u\big)-(2\mu+\lambda)\nabla\rho^\frac{\delta-1}{2}\div u+J\cdot\nabla H-\nabla H\cdot J.
\end{split}
\end{equation}
It follows from Lemma \ref{G-N}, \eqref{zsa1}-\eqref{zsa2},  \eqref{rho_sup}, \eqref{qaqa3}, \eqref{g3u}, and \eqref{114} that
\begin{equation}\label{e349}
\begin{split}
&\|\nabla(\rho^\frac{\delta-1}{2}\mathcal{L} u)\|_{L^2}+\|\curl\left(\rho^\frac{\delta-1}{2}\curl\curl u\right)\|_{L^2}\\
&\leq C\Big(\|\rho^\frac{1-\delta}{2}\|_{L^\infty}\|\nabla\rho^\frac{\delta-1}{2}\|_{L^{6}}\|\rho^\frac{1-\delta}{2}u_t\|_{L^3}
+\|\rho^\frac{1-\delta}{2}\|_{L^\infty}\|\mathrm{div}u_t\|_{L^{2}}\\
&\quad+\|\rho^{1-\delta}\|_{L^\infty}\|\nabla\rho^\frac{\delta-1}{2}\|_{L^{6}}\|u\|_{L^6}\|\nabla u\|_{L^6}+\|\rho^\frac{1-\delta}{2}\|_{L^\infty}\|\nabla u\|_{L^{3}}\|\nabla u\|_{L^6}\\
&\quad+\|\rho^{1-\delta}\|_{L^\infty}\|u\|_{L^\infty}\|\nabla^2u\|_{L^{2}}
+\|\rho^{\gamma-\delta}\|_{L^\infty}\|\nabla^2\rho^\frac{\delta-1}{2}\|_{L^{2}}\\
&\quad+\|\rho^\frac{1-\delta}{2}\|_{L^\infty}\|\nabla\rho^{\gamma-\frac{1+\delta}{2}}\|_{L^3}\|\nabla\rho^\frac{\delta-1}{2}\|_{L^{6}}
+\|\nabla^2\rho^\frac{\delta-1}{2}\|_{L^{2}}\|\nabla u\|_{L^\infty}\\
&\quad+\|\rho^\frac{1-\delta}{2}\|_{L^\infty}\|\nabla\rho^\frac{\delta-1}{2}\|_{L^{6}}^2
\|\mathcal{S} (u)\|_{L^{6}}
+\|\nabla\rho^\frac{\delta-1}{2}\|_{L^{6}}\|\nabla^2u\|_{L^3}+\|\nabla J\|_{L^3}\|\nabla H\|_{L^6}\\
&\quad+\|J\|_{L^\infty}\|\nabla^2 H\|_{L^2}\Big)\\
&\leq C\Big(\|\rho\|_{L^\infty}^{3(1-\delta)/4}\|\nabla\rho^\frac{\delta-1}{2}\|_{L^{6}}
\|\rho^\frac{1-\delta}{2}u_t\|_{L^2}^{1/2}\|\nabla u_t\|_{L^2}^{1/2}
+\|\rho\|_{L^\infty}^{(1-\delta)/2}\|\nabla u_t\|_{L^2}\\
&\quad+\|\rho\|_{L^\infty}^{1-\delta}\|\nabla^2\rho^\frac{\delta-1}{2}\|_{L^{2}}\|\nabla u\|_{H^1}^2
+\|\rho\|_{L^\infty}^{1-\delta}\|\nabla u\|_{H^1}\|\nabla^2u\|_{L^2}
+\|\rho\|_{L^\infty}^{\gamma-\delta}\|\nabla^2\rho^\frac{\delta-1}{2}\|_{L^{2}}\\
&\quad+\|\rho\|_{L^\infty}^{(1-\delta)/2}\|\nabla\rho^{\gamma-\frac{1+\delta}{2}}\|_{H^1}\|\nabla\rho^\frac{\delta-1}{2}\|_{L^{6}}
+\|\rho\|_{L^\infty}^{(1-\delta)/2}\|\nabla\rho^\frac{\delta-1}{2}\|_{L^{6}}^2\|\nabla u\|_{H^1}\\
&\quad+\|\nabla\rho^\frac{\delta-1}{2}\|_{L^{6}\cap D^{1}}\|\nabla u\|_{H^1}^{1/2}\|\nabla u\|_{H^2}^{1/2}
+\|\nabla J\|_{L^2}^{1/2}\|\nabla J\|_{H^1}^{1/2}\|\nabla H\|_{H^1}\Big)\\
&\leq C\mathcal{E}^\beta\bigl(1+\|\nabla u_t\|_{L^2}\bigl)+C\|J_t\|_{L^2}.
\end{split}
\end{equation}

Therefore, we can obtain that
\begin{align}\label{98y}
\|w\|_{D^{1}}^2&=\frac{1}{(2\mu+\lambda)^2}\|\rho^\frac{\delta-1}{2}(\mathcal{L}u+\mu\curl\curl u)\|_{D^{1}}^2\nonumber\\
&\leq C\|\nabla(\rho^\frac{\delta-1}{2}\mathcal{L} u)\|_{L^2}^2+C\|\nabla\left(\rho^\frac{\delta-1}{2}\curl\curl u\right)\|_{L^2}^2\nonumber\\
&\leq C\|\nabla(\rho^\frac{\delta-1}{2}\mathcal{L} u)\|_{L^2}^2+C\|\div\left(\rho^\frac{\delta-1}{2}\curl\curl u\right)\|_{L^2}^2+C\|\curl\left(\rho^\frac{\delta-1}{2}\curl\curl u\right)\|_{L^2}^2\nonumber\\
&\quad+C\|\rho^\frac{\delta-1}{2}\curl\curl u\cdot n\|_{H^{1/2}(\partial\Omega)}^2+C\|\rho^\frac{\delta-1}{2}\curl\curl u\|_{L^2}^2\\
&\leq C\|\nabla(\rho^\frac{\delta-1}{2}\mathcal{L} u)\|_{L^2}^2+C\|\nabla\rho^\frac{\delta-1}{2}\|_{L^6}^2\|\nabla^2 u\|_{L^3}^2+C\|\curl\left(\rho^\frac{\delta-1}{2}\curl\curl u\right)\|_{L^2}^2\nonumber\\
&\quad+C\mathcal{E}^\beta\|\curl\curl u\|_{H^1}^2+C\|\rho^\frac{\delta-1}{2}\curl\curl u\|_{L^2}^2\nonumber\\
&\leq C\mathcal{E}^\beta\bigl(1+\|\nabla u_t\|_{L^2}^2\bigr)+C\|\rho^\frac{\delta-1}{2}\mathcal{L}u\|_{L^2}^2+C\|J_t\|_{L^2}^2.\nonumber
\end{align}
owing to Lemma \ref{l22}, \eqref{rho_lb}, \eqref{curlu}, and \eqref{114}.
Additionally, Lemma \ref{G-N} combined with \eqref{114} and \eqref{98y} implies
\begin{align}\label{117}
\|w\|_{L^6}\leq C\|w\|_{H^1}\leq C\|\rho^\frac{\delta-1}{2}\mathcal{L}u\|_{L^2}+C\|J_t\|_{L^2}^2+ C\mathcal{E}^\beta\bigl(1+\|\nabla u_t\|_{L^2}\bigr).
\end{align}

Defined $\varPhi\triangleq\rho^\frac{\delta-1}{2}$, one gets from $\eqref{9}_1$ multiplied by $(\frac{\delta-1}{2})\rho^{\frac{\delta-1}{2}-1}$ that
\begin{equation}\label{38}
\varPhi_t+u\cdot\nabla\varPhi+\frac{\delta-1}{2}\varPhi\mathrm{div}u=0.
\end{equation}
 Integrating \eqref{38} multiplied  by $|\varPhi|^{4}\varPhi$ over $\Omega_0$,  one gets
\begin{equation}
	\begin{split}\label{popza}
		\frac{d}{dt}\|\varPhi\|_{L^6(\Omega_0)}^6
		\leq& C\int_{\Omega_0}\bigl(|u||\nabla\varPhi||\varPhi|^5
		+|\varPhi|^6|\mathrm{div}u|\bigr)dx\\
		\leq&C\bigl(\|u\|_{L^\infty}\|\nabla\varPhi\|_{L^6}\|\varPhi\|_{L^6(\Omega_0)}^5
		+\|\varPhi\|_{L^6(\Omega_0)}^{6}\|\nabla u\|_{L^\infty}\bigr).
	\end{split}
\end{equation}
Now, operating  $\nabla$ to \eqref{38}   and multiplying the resulting equality by $|\nabla\varPhi|^{4}\nabla\varPhi$,  one obtains after integration by parts over $\Omega_R$ that
\begin{equation}
	\begin{split}\label{pop}
		\frac{d}{dt}\|\nabla\varPhi\|_{L^6}^6
		\leq& C\int\bigl(|\nabla\varPhi|^6|\nabla u|
		+|\nabla\varPhi|^5|\rho^\frac{\delta-1}{2}\nabla\mathrm{div}u|\bigr)dx\\
		\leq&C\bigl(\|\nabla u\|_{L^\infty}\|\nabla\varPhi\|_{L^6}^6
		+\|\nabla\varPhi\|_{L^6}^{5}\|\rho^\frac{\delta-1}{2}\nabla\mathrm{div}u\|_{L^6}\bigr).
	\end{split}
\end{equation}
Then, operating  $\nabla^2$ to \eqref{38}  and multiplying the resulting equality by $\nabla^2\varPhi$, integrating it over $\Omega_R$, it holds that
\begin{equation}\label{pop1}
\begin{split}
\frac{d}{dt}\|\nabla^2\varPhi\|_{L^{2}}^2
\leq& C\int\bigl(|\nabla u||\nabla^{2}\varPhi|^2+|\nabla^2u||\nabla\varPhi||\nabla^2\varPhi|+|\nabla^2\varPhi||\nabla(\rho^\frac{\delta-1}{2}\nabla\mathrm{div}u)|\bigr)dx\\
\leq&C\bigl(\|\nabla u\|_{L^\infty}\|\nabla^2\varPhi\|_{L^{2}}^{2}+\|\nabla^2u\|_{L^3}\|\nabla\varPhi\|_{L^{6}}\|\nabla^2\varPhi\|_{L^{2}}\\
&\quad\quad +\|\nabla^2\varPhi\|_{L^{2}}\|\nabla(\rho^\frac{\delta-1}{2}\nabla\mathrm{div}u)\|_{L^{2}}\bigr).
\end{split}
\end{equation}

Finally, we deduce from \eqref{popza}-\eqref{pop1} and \eqref{g3u}, \eqref{98y}, \eqref{117} that
\begin{equation}\nonumber
\begin{split}
&\frac{d}{dt}\left(\|\phi\|_{L^6(\Omega_0)}+\|\nabla\phi\|_{L^6\cap D^{1}}\right)\\
&\leq C\|\nabla u\|_{H^2}\left(\|\phi\|_{L^6(\Omega_0)}+\|\nabla\phi\|_{L^6\cap D^{1}}\right)+C\|\rho^\frac{\delta-1}{2}\nabla\mathrm{div}u\|_{L^6\cap D^{1}}\\
&\leq C\mathcal{E}^\beta+C\|\nabla u_t\|_{L^2}^2+C\|\rho^\frac{\delta-1}{2}\mathcal{L}u\|_{L^2}^2+C\|J_t\|_{L^2}^2.
\end{split}
\end{equation}
Integrating the above inequality over $(0,t)$, using \eqref{e313} and \eqref{e310}, we thus obtain \eqref{e343} and complete the proof of Lemma \ref{L3.6}.
\end{proof}

Now, Proposition \ref{p1} is a direct consequence of Lemmas \ref{L3.1}-\ref{L3.6}.

\begin{solution}
First, direct calculations  imply that
\begin{equation}
    \begin{split}\label{dsxc}
\sup_{0\leq s\leq t}\|\nabla u\|_{L^2}^2&\leq C\sup_{0\leq s\leq t}\big(\|\curl u\|_{L^2}^2+\|\div u\|_{L^2}^2+\|u\|_{L^2(\Omega_0)}^2\big)\\
&\leq C\exp\bigg(C\int_0^t\mathcal{E}^\beta ds\bigg)+C\sup_{0\leq s\leq t}\big(\|\rho^\frac{\delta-1}{2}\|_{L^\infty(\Omega_0)}^2\|\rho^\frac{1-\delta}{2}u\|_{L^2(\Omega_0)}^2\big)\\
&\leq C\exp\bigg(C\int_0^t\mathcal{E}^\beta ds\bigg),
    \end{split}
\end{equation}
where we have used Lemma \ref{l22}, \eqref{e34}, \eqref{e310}, and \eqref{e343}.

Therefore, it follows from \eqref{e34}, \eqref{e310}, \eqref{e313}, \eqref{e336}, and \eqref{e343} that for all $t\in(0,T_1]$,
\begin{equation}
\mathcal{E}^2(t)\leq  CC_0^\beta+C\int_0^t\mathcal{E}^\beta ds+C\exp\bigg(C\int_0^t\mathcal{E}^\beta ds\bigg)\leq C\exp\bigg(C\int_0^t\mathcal{E}^\beta ds\bigg).
\end{equation}
Hence, the standard arguments yield that for $M\triangleq Ce^C$ and $T_0\triangleq \min\{T_1,(CM^\beta)^{-1}\}$,
\begin{equation}\nonumber
	\sup_{0\leq t\leq T_0}\mathcal{E}(t)\leq M.
\end{equation}
which together with \eqref{e34}, \eqref{e310}, and \eqref{e313} gives \eqref{e32}. The proof of Proposition \ref{p1} is completed.
\end{solution}
\begin{remark}
We can also deduce from Lemma \ref{G-N}, \eqref{e310}, \eqref{e313}, \eqref{g3u}, and \eqref{114} that for all $t\in(0,T_0]$,
\begin{equation}\label{3.49}
\begin{split}
&\sup\limits_{0\leq s\leq t}(\|u\|_{L^6}+\|\nabla^2 u\|_{L^2}+\|\nabla^2 H\|_{L^2})
+\int_0^{t}\Big(\|u_t\|_{L^6}^2+\|H_t\|_{L^6}^2+\|\nabla^2 J\|_{L^2}^2\\
&\quad\quad\quad\quad\quad\quad\quad\quad\quad\quad\quad+\|\nabla^3 u\|_{L^2}^2+\|\nabla^3 H\|_{L^2}^2+\|\rho^\frac{\delta-1}{2}\nabla\mathrm{div}u\|_{ L^6\cap D^{1}}^2\Big)ds\leq C.
\end{split}
\end{equation}
\end{remark}

\section{Proof of Theorem 1.1}
\subsection{Existence of the strong solution}\label{4.1}
Letting $(\rho_0, u_0, H_0, J_0)$ be as in Theorem 1.1, since the vacuum only appears in the far-field, we first construct the  approximate smooth $\rho^R_0\in C^\infty(\Omega)$ satisfying that $\rho^R_0>0$ and
\begin{equation}
\left\{ \begin{array}{l}
(\rho^R_0)^\frac{\delta-1}{2}\rightarrow \rho_0^\frac{\delta-1}{2} \in L^6(\Omega_0),\\
(\rho^R_0)^{\gamma-\frac{1+\delta}{2}}\rightarrow \rho_0^{\gamma-\frac{1+\delta}{2}}\ in\ D_0^1(\Omega)\cap D^2(\Omega),\\
\nabla(\rho^R_0)^\frac{\delta-1}{2}\rightarrow \nabla\rho_0^\frac{\delta-1}{2}\ in\  D_0^{1}(\Omega),
       \end{array} \right. ~~~~~\mbox{as}~~~~R\rightarrow\infty.
\end{equation}
Next, we consider the unique strong solution of the following Lam\'{e} system
\begin{equation}
\left\{ \begin{array}{l}
\mathcal{L}u_0^R=-(\rho^R_0)^{1-\delta}u_0^R+(\rho^R_0)^\frac{1-\delta}{2}(\rho_0^\frac{1-\delta}{2}u_0+g)\ast j_\frac{1}{R},\\
u_0^R\cdot n=0,\ \mathrm{curl}u_0^R\times n=-Au,\ x\in\partial \Omega,\\
u_0^R\cdot n=0,\ \mathrm{curl}u_0^R\times n=0,\ x\in\partial B_R.
       \end{array} \right.
\end{equation}
Following the similar arguments as \cite{13}, it can be proved that
\begin{equation}
\lim_{R\rightarrow\infty}(\|u_0^R-u_0\|_{D^1\cap D^2}+\|(\rho^R_0)^\frac{1-\delta}{2}u_0^R-\rho_0^\frac{1-\delta}{2}u_0\|_{L^2})=0.
\end{equation}
Then, we consider the unique strong solution of the linear Stokes system
\begin{equation}
\left\{ \begin{array}{l}
-\Delta H_0^R+H_0^R+\nabla P^R=-\Delta \left(H_0\ast j_\frac{1}{R}\right)+H_0\ast j_\frac{1}{R},\\
H_0^R\cdot n=0,\  \mathrm{curl}H_0^R\times n=0,\ x\in\partial \Omega,\\
H_0^R=0,\ x\in\partial B_R,\\
 \mathrm{div}H_0^R=0.
       \end{array} \right.
\end{equation}
Let $J_0^R=H_0^R(\rho_0^R)^{-\frac{1+\delta}{2}}$, it can be proved that
\begin{equation}
\lim_{R\rightarrow\infty}\big(\|H_0^R-H_0\|_{H^2}+\|J_0^R-J_0\|_{H^1}\big)=0.
\end{equation}

According to Lemma \ref{l25}, the IBVP \eqref{9} with the initial data $(\rho^R_0,u_0^R,H_0^R,J_0^R)$ has a classical solution $(\rho^R,u^R,H^R,J^R)$ on $\Omega_R\times[0,T_R]$. Moreover, Proposition \ref{p1} shows that  there exists a $T_0$ independent of $R$ such that both \eqref{e32} and \eqref{3.49} hold for $(\rho^R,u^R, H^R, J^R)$. Extending $(\rho^R,u^R,\phi^R:=\nabla(\rho^R)^\frac{\delta-1}{2},H^R,J^R)$ by zero on $\Omega\setminus \Omega_R$ and denoting
\begin{equation}
\tilde{\rho}^R=\rho^R\varphi_R^\nu,
~~\tilde{\phi}^R=\nabla(\rho^R)^\frac{\delta-1}{2}\varphi_R,~~w^R=u^R\varphi_R,~~\tilde{H}^R=H^R\varphi_R,~~\tilde{J}^R=J^R\varphi_R,
\end{equation}
with $\nu=\frac{2}{2\gamma-1-\delta}$ and $\varphi_R\in C_0^\infty(B_R)$ satisfying
\begin{equation}\label{phi_R}
    0\leq\varphi_R\leq 1,~~\varphi_R(x)\leq 1,~\text{for}~\vert x\vert\leq R/2,~~\vert\nabla^k\varphi_R\vert\leq CR^{-k}~(k=1,2,3),
\end{equation}
for $R> 2R_0+1$, we thus deduce from Proposition \ref{p1}, \eqref{3.49}, and \eqref{phi_R} that
\begin{equation}\label{4.6}
\begin{split}
&\sup_{0\leq t\leq T_0}(\|(\tilde{\rho}^R)^\frac{1-\delta}{2}w^R\|_{L^2(\Omega)}+\|\nabla w^R\|_{L^2(\Omega)}
+\|(\tilde{\rho}^R)^\frac{1-\delta}{2}w^R_t\|_{L^2(\Omega)})\\
&\leq C+C\sup_{0\leq t\leq T_0}(\|\nabla u^R\|_{L^2(\Omega_R)}+\|u^R\|_{L^6(\Omega_R)}\|\nabla \varphi_R\|_{L^3(\Omega_R)})\\
&\leq C,
\end{split}
\end{equation}
\begin{equation}
\begin{split}
&\sup_{0\leq t\leq T_0}(\|w^R\|_{L^6(\Omega)}+\|\nabla^2w^R\|_{L^2(\Omega)})\\
&\leq C+C\sup_{0\leq t\leq T_0}(\|\nabla^2 u^R\|_{L^2(\Omega_R)}+\frac{1}{R}\|\nabla u^R\|_{L^2(\Omega_R)}
+\|u^R\|_{L^6(\Omega_R)}\|\nabla^2 \varphi_R\|_{L^3(\Omega_R)})\\
&\leq  C,
\end{split}
\end{equation}
\begin{equation}
\begin{split}
&\sup_{0\leq t\leq T_0}(\|\tilde H^R\|_{L^2(\Omega)}+\|\nabla \tilde H^R\|_{L^2(\Omega)}
+\|\tilde H^R_t\|_{L^2(\Omega)})\\
&\leq C+C\sup_{0\leq t\leq T_0}(\|\nabla H^R\|_{L^2(\Omega_R)}+\| H^R\|_{L^6(\Omega_R)}\|\nabla \varphi_R\|_{L^3(\Omega_R)})\\
&\leq  C,
\end{split}
\end{equation}
\begin{equation}
\begin{split}
&\sup_{0\leq t\leq T_0}(\|\tilde H^R\|_{L^6(\Omega)}+\|\nabla^2\tilde H^R\|_{L^2(\Omega)})\\
&\leq C+C\sup_{0\leq t\leq T_0}(\|\nabla^2 H^R\|_{L^2(\Omega_R)}+\frac{1}{R}\|\nabla H^R\|_{L^2(\Omega_R)}
+\|H^R\|_{L^6(\Omega_R)}\|\nabla^2 \varphi_R\|_{L^3(\Omega_R)})\\
&\leq  C,
\end{split}
\end{equation}
and
\begin{equation}
\begin{split}
&\sup_{0\leq t\leq T_0}(\|\tilde J^R\|_{L^2(\Omega)}+\|\nabla \tilde J^R\|_{L^2(\Omega)})\\
&\leq C+C\sup_{0\leq t\leq T_0}(\|\nabla J^R\|_{L^2(\Omega_R)}+\| J^R\|_{L^6(\Omega_R)}\|\nabla \varphi_R\|_{L^3(\Omega_R)})\\
&\leq  C.
\end{split}
\end{equation}

Next, the straightforward calculations yield that
\begin{equation}
\begin{split}
&\sup_{0\leq t\leq T_0}\|(\tilde{\rho}^R)^{\gamma-\frac{1+\delta}{2}}\|_{W^{1,6}\cap D^1\cap D^2(\Omega)}\\
&\leq C+C\sup_{0\leq t\leq T_0}(\|\nabla (\rho^R)^{\gamma-\frac{1+\delta}{2}}\|_{L^2\cap L^6(\Omega_R)}
+\|(\rho^R)^{\gamma-\frac{1+\delta}{2}}\|_{W^{1,6}(\Omega_R)}\|\nabla \varphi_R\|_{L^3\cap L^6(\Omega_R)})\\
&+C\sup_{0\leq t\leq T_0}(\|\nabla^2(\rho^R)^{\gamma-\frac{1+\delta}{2}}\|_{L^2(\Omega_R)}
+\|(\rho^R)^{\gamma-\frac{1+\delta}{2}}\|_{L^6(\Omega_R)}\|\nabla \varphi_R\|_{L^3(\Omega_R)})\\
&\leq C,
\end{split}
\end{equation}
and that
\begin{equation}
\begin{split}
&\sup_{0\leq t\leq T_0}\|\tilde{\phi}^R\|_{L^{6}\cap D^{1,2}(\Omega)}\\
&\leq C+C\sup_{0\leq t\leq T_0}(\|\nabla^2 (\rho^R)^\frac{\delta-1}{2}\|_{L^{2}(\Omega_R)}
+\|\nabla (\rho^R)^\frac{\delta-1}{2}\|_{L^{6}(\Omega_R)}\|\nabla \varphi_R\|_{L^3(\Omega_R)})\\
&\leq C.
\end{split}
\end{equation}

Similarly, it follows from Proposition \ref{p1} and \eqref{3.49} that
\begin{equation}
    \int_0^{T_0}\bigl(\|\nabla w^R_t\|_{L^2(\Omega)}+\|\nabla \tilde H^R_t\|_{L^2(\Omega)}+\|\tilde J^R_t\|_{L^2(\Omega)}+\|\nabla^2 \tilde J^R\|_{L^2(\Omega)}^2\bigr)dt
  \leq C,
\end{equation}
\begin{equation}
    \int_0^{T_0}\bigl(\| w^R_t\|_{L^6(\Omega)}^2+\|\nabla^3 w^R\|_{L^2(\Omega)}^2+\| \tilde H^R_t\|_{L^6(\Omega)}^2+\|\nabla^3 \tilde H^R\|_{L^2(\Omega)}^2\bigr)dt
  \leq C,
\end{equation}
and 
\begin{equation}\label{4.12}
\begin{split}
\int_0^{T_0}\|\bigl((\tilde{\rho}^R)^{\gamma-\frac{1+\delta}{2}}\bigr)_t\|_{L^6(\Omega)}^2dt\leq
    & \int_0^{T_0}\|u\cdot\nabla(\rho^R)^{\gamma-\frac{1+\delta}{2}}\|_{L^6(\Omega_R)}^2
    +\|(\rho^R)^{\gamma-\frac{1+\delta}{2}}\mathrm{div}u\|_{L^6(\Omega_R)}^2dt\\
    \leq&\int_0^{T_0}\|(\rho^R)^{\gamma-\frac{1+\delta}{2}}\|_{W^{1,6}(\Omega_R)}^2
    \|\nabla u\|_{H^1(\Omega_R)}^2dt\\
  \leq& C,
\end{split}
\end{equation}
due to \eqref{310}.
With all these estimates \eqref{4.6}-\eqref{4.12} at hand, there exists a subsequence $R_j$, $R_j\rightarrow\infty$, such that $(\tilde{\rho}^{R_j},w^{R_j},\tilde{\phi}^{R_j},\tilde{H}^{R_j},\tilde{J}^{R_j})$ converge to some limit $(\rho,u,\phi,H,J)$ in some weak sense as follows:
\begin{align}
	\rho^{R_j}\rightarrow\rho,u^{R_j}\rightarrow u,H^{R_j}\rightarrow H\ in\ C(\overline{\Omega_N}\times[0,T_0]),\forall\ N>2R_0+1,\label{strong}\\
	(\rho^{R_j})^{\gamma-\frac{1+\delta}{2}}\rightharpoonup \rho^{\gamma-\frac{1+\delta}{2}}\ \mathrm{weakly}^\ast\ in\ L^\infty(0,T_0; D_0^1\cap D^2(\Omega)),\\
	\tilde{\phi}^R\rightharpoonup\phi\ \mathrm{weakly}^\ast\ L^\infty(0,T_0;D_0^{1}(\Omega)),\\
	u^{R_j}\rightharpoonup u,H^{R_j}\rightharpoonup H\ \mathrm{weakly}^\ast\ in\ L^\infty(0,T_0;D_0^1(\Omega)\cap D^2(\Omega)),\\
	J^{R_j}\rightharpoonup J\ \mathrm{weakly}^\ast\ in\ L^\infty(0,T_0;L^2(\Omega)\cap D^1(\Omega)),\\
    \nabla^3u^{R_j}\rightharpoonup\nabla^3u,\nabla^3H^{R_j}\rightharpoonup\nabla^3H\ \mathrm{weakly}\ in\ L^2((0,T_0)\times\Omega),\\
	u^{R_j}_t\rightharpoonup u_t\ \mathrm{weakly}\ in\ L^2(0,T_0;D_0^1(\Omega)),\\
	H^{R_j}_t\rightharpoonup H_t\ \mathrm{weakly}\ in\ L^2(0,T_0;H^1(\Omega)),\\
	J^{R_j}_t\rightharpoonup J_t\ \mathrm{weakly}\ in\ L^2((0,T_0)\times\Omega).\label{weak}
\end{align}
And, it is easy to check that $\phi=\nabla \rho^\frac{\delta-1}{2}$ in the weak sense, since $\tilde{\phi}^R=\nabla(\tilde{\rho}^R)^\frac{\delta-1}{2}$ in $B_{R/2}$ for all $R>2R_0+1$. Next, for any test function $\Psi\in C_0^\infty(\Omega\times[0,T_0])$, it follows form \eqref{strong}-\eqref{weak} that
\begin{equation}
    \begin{split}
    \lim_{j\rightarrow\infty}\int_0^{T_0}\int_{\Omega}(\rho^{R_j})^\frac{\delta-1}{2}\mathcal{L}u^{R_j}\varphi_{R_j}\Psi dx dt
        &=\lim_{j\rightarrow\infty}\int_0^{T_0}\int_{supp\Psi}(\rho^{R_j})^\frac{\delta-1}{2}\mathcal{L}u^{R_j}\varphi_{R_j}\Psi dx dt\\
        &=\lim\limits_{\substack{j\rightarrow\infty \\supp\Psi\subset B_{R_j/2}}}\int_0^{T_0}\int_{supp\Psi}(\rho^{R_j})^\frac{\delta-1}{2}\mathcal{L}u^{R_j}\varphi_{R_j}\Psi dxdt\\
        &=\lim\limits_{\substack{j\rightarrow\infty \\supp\Psi\subset B_{R_j/2}}}\int_0^{T_0}\int_{\Omega}(\tilde{\rho}^{R_j})^\frac{\delta-1}{2}\mathcal{L}w^{R_j}\Psi dx dt\\
        &=\int_0^{T_0}\int_{\Omega}\rho^\frac{\delta-1}{2}\mathcal{L}u\Psi dx dt,
    \end{split}
\end{equation}
and thus
\begin{align}
        (\rho^{R_j})^\frac{\delta-1}{2}\mathcal{L}u^{R_j}\varphi_{R_j}\rightharpoonup \rho^\frac{\delta-1}{2}\mathcal{L}u\ \mathrm{weakly}\ in\ L^2((0,T_0)\times\Omega).
\end{align}
Similarly, we can obtain 
\begin{align}
	(\rho^{R_j})^\frac{\delta-1}{2}\nabla\mathrm{div}u^{R_j}\varphi_{R_j}\rightharpoonup\rho^\frac{\delta-1}{2}\nabla\mathrm{div}u\ \mathrm{weakly}\ in\ L^2((0,T_0)\times\Omega),\\
	\nabla((\rho^{R_j})^\frac{\delta-1}{2}\nabla\mathrm{div}u^{R_j})\varphi_{R_j}\rightharpoonup\nabla(\rho^\frac{\delta-1}{2}\nabla\mathrm{div}u)\ \mathrm{weakly}\ in\  L^2((0,T_0)\times\Omega).
\end{align}
Moreover, $(\rho,u,H,J)$ also satisfies \eqref{e32} and \eqref{3.49}. Therefore, we can easily show that $(\rho,u, H, J)$ is a strong solution to the system \eqref{q1q} and thus obtain the strong solution $(\rho,u, H,)$ to the IBVP \eqref{1}-\eqref{7}  satisfying the regularity \eqref{e116}.

\subsection{Uniqueness of the strong solution}
It remains to prove the uniqueness of the solution constructed in Section \ref{4.1}. Assume that $(\rho_1,u_1,H_1,J_1)$ and $(\rho_2,u_2,H_2,J_2)$ are two strong solutions satisfying \eqref{e116} with the same initial data. Set
$$
\begin{gathered}
h_i=\rho^\frac{1-\delta}{2}_i,\ \varphi_i =\rho^{\gamma-\delta}_i,\ \psi_i=\nabla\log\rho_i,\ g_i=\rho^{-\delta},\ i=1,2,\\
\bar{h}=h_1-h_2,\ \bar{\varphi}=\varphi_1-\varphi_2,\ \bar{\psi}=\psi_1-\psi_2,\ \bar{g}=g_1-g_2,\
\bar{u}=u_1-u_2,\ \bar H=H_1-H_2.
\end{gathered}
$$
Meanwhile, $(\bar h, \bar{\varphi}, \bar{\psi}, \bar{g}, \bar{u}, \bar{H})$ meets the following initial conditions
\begin{equation*}
	(\bar h, \bar{\varphi}, \bar{\psi}, \bar{g}, \bar{u}, \bar{H})|_{t=0}=(0,0,0,0,0),
\end{equation*}
with boundary conditions
\begin{equation*}
	\overline{u}\cdot n|_{\partial\Omega}=0,\quad \mbox{curl}\overline{u}\times n|_{\partial\Omega}=-A\overline{u},
\end{equation*}
\begin{equation*}
	\overline{H}\cdot n|_{\partial\Omega}=0, \quad \mbox{curl}\overline{H}\times n|_{\partial\Omega}=0.
\end{equation*}

Next, our proof is mainly divided into three parts.

\textbf{Step 1 (estimate of $\bar u$)}:
Subtracting the momentum equations satisfied by $(\rho_1, u_1, H_1)$ and $(\rho_2, u_2, H_2)$ yields
\begin{equation}\label{2.96}
\begin{split}
&h_1^2\bar{u}_t+h_1^2u_1\cdot\nabla\bar{u}-\mathcal{L}\bar{u}=-\nabla\bar{\varphi}-\bar{h}(h_1+h_2)(u_2)_t
-\bar{h}(h_1+h_2)u_2\cdot\nabla u_2\\
&\quad-h_1^2\bar{u}\cdot\nabla u_2+\bar{\psi}\cdot\mathcal{S}(u_2)+\psi_1\cdot\mathcal{S}(\bar{u})
+g_1H_1\cdot\nabla \bar H+\bar gH_1\cdot\nabla H_2+g_2\bar H\cdot\nabla H_2\\
&\quad -g_1\nabla \bar H\cdot H_1-\bar g\nabla H_2\cdot H_1-g_2\nabla H_2\cdot \bar H,
\end{split}
\end{equation}
where (and in the following)  we   take all the coefficients as 1 for simplicity. 

First, observing that
\begin{align}\label{lop1}
g_1H_1=h_1J_1,\ g_2\nabla H_2=h_2\psi_2\otimes J_2+h_2\nabla J_2,
\end{align}
one obtains after multiplying \eqref{2.96} by $\bar{u}$ and integrating by parts that 
\begin{equation}
\begin{split}\label{wsw2}
&\frac{1}{2}\frac{d}{dt}\|h_1\bar u\|_{L^2}^2+\mu\|\mathrm{curl} \bar u\|_{L^2}^2+(2\mu+\lambda)\|\mathrm{div}\bar u\|_{L^2}^2+\mu\int_{\partial\Omega}\bar u\cdot A\cdot \bar udS\\
&\leq C\Big(\|\mathrm{div}u_1\|_{L^\infty}\|h_1\bar{u}\|_{L^2}^2+\|\bar{\varphi}\|_{L^2}\|\nabla\bar u\|_{L^2}
+\|\bar{h}\|_{L^2}\|h_1\bar{u}\|_{L^3}\|(u_2)_t\|_{L^6}\\
&\quad+\|\bar{h}\|_{L^2}\|h_2(u_2)_t\|_{L^3}\|\bar{u}\|_{L^6}+\|\bar{h}\|_{L^2}\|h_1+h_2\|_{L^\infty}\|u_2\|_{L^6}\|\nabla u_2\|_{L^6}\|\bar{u}\|_{L^6}\\
&\quad+\|\nabla u_2\|_{L^\infty}\|h_1\bar{u}\|_{L^2}^2+\|\bar{\psi}\|_{L^2}\|\nabla u_2\|_{L^3}\|\bar{u}\|_{L^6}\\
&\quad +\|(h_1)^{-1}\psi_1\|_{L^{6}}\|\nabla\bar{u}\|_{L^2}\|h_1\bar{u}\|_{L^3}+\|J_1\|_{L^6}\|\nabla\bar H\|_{L^2}\|h_1\bar u\|_{L^3}\\
&\quad +\|\bar g\|_{L^2}\|H_1\|_{L^6}\|\nabla H_2\|_{L^6}\|\bar u\|_{L^6}+\|h_2\|_{L^\infty}\|\psi_2\|_{L^6}\|J_2\|_{L^6}\|\bar H\|_{L^2}\|\bar u\|_{L^6}\\
&\quad +\|h_2\|_{L^\infty}\|\nabla J_2\|_{L^2}\|\bar H\|_{L^3}\|\bar u\|_{L^6}\Big)\\
&\leq \epsilon\|\nabla \bar u\|_{L^2}^2+\epsilon\|\nabla \bar H\|_{L^2}^2\\
&\quad+CF(t)\Big(\|h_1\bar{u}\|_{L^2}^2+\|\bar{\varphi}\|_{L^2}^2+\|\bar{h}\|_{L^2}^2+\|\bar\psi\|_{L^2}^2
+\|\bar g\|_{L^2}^2
+\|\bar{H}\|_{L^2}^2\Big),
\end{split}
\end{equation}
where
$$
F(t)=1+\sum_{i=1}^{2}\big(\| u_i\|_{D^3}^2+\|(u_i)_t\|_{D^1}^2+\|H_i\|_{D^3}^2+\| (H_i)_t\|_{D^1}^2+\|J_i\|_{D^2}^2+\| (J_i)_t\|_{L^2}^2\big).
$$
Using the trace theorem, the boundary term in \eqref{za3.4} can be governed as
\begin{align}\label{2trace2}
\int_{\partial\Omega} \bar u\cdot A\cdot\bar udS\leq C\||\bar u|^2\|_{W^{1,1}(\Omega_0)}  &\leq C\|h_1\bar{u}\|_{L^2}^2+\epsilon\|\nabla u\|_{L^2}^2,
\end{align}
where we used
\begin{equation}\label{zrho_inf1}
    \|h_1^{-1}\|_{L^\infty(\Omega_0)}=\|h_1^{-1}\|_{W^{1,6}(\Omega_0)}\leq C.
\end{equation}

Next, it follows from, \eqref{2.96}, and  the standard $L^p$ estimate theory that 
\begin{equation}\label{qaza}
\begin{split}
\|\nabla^2\bar{u}\|_{L^2}\leq C\|\mathcal{L}\bar{u}\|_{L^2}+C\|\nabla\bar{u}\|_{L^2}
\leq &C\|h_1\bar{u}_t\|_{L^2}+F(t)(\|\bar{h}\|_{H^1}+\|\nabla\bar{u}\|_{L^2}+\|\nabla\bar{\varphi}\|_{L^2})\\
&+CF(t)(\|\bar\psi\|_{L^2}+\|\bar{g}\|_{L^2}+\|\bar H\|_{H^1}),
\end{split}
\end{equation}
and that
\begin{align}\label{qaza1}
h_1^2(u_1)_t-h_2^2(u_2)_t=\frac12\Big(h_1^2\bar u_t+(h_1+h_2)\bar h(u_2)_t\Big)+\frac12\Big((h_1+h_2)\bar h(u_1)_t+h_2^2\bar u_t\Big).
\end{align}
Then, multiplying \eqref{2.96} by $2\bar{u}_t$ and integrating by parts lead to
\begin{equation}
\begin{split}\label{wsw2q}
&\frac{d}{dt}\left(\mu\|\mathrm{curl}\bar u\|_{L^2}^2+(2\mu+\lambda)\|\mathrm{div}\bar u\|_{L^2}^2+\mu\int_{\partial\Omega} \bar u\cdot A\cdot\bar udS\right)+\|h_1\bar u_t\|_{L^2}^2+\|h_2\bar u_t\|_{L^2}^2\\
&= 2\int\Big(-\frac12(h_1+h_2)\bar{h}\big((u_2)_t+(u_1)_t\big)-h_1^2u_1\cdot\nabla \bar u-\nabla\bar{\varphi}
\\
&\quad-\bar{h}(h_1+h_2)u_2\cdot\nabla u_2-h_1^2\bar{u}\cdot\nabla u_2+\bar\psi\cdot\mathcal{S}(u_2)+\psi_1\cdot\mathcal{S}(\bar{u})
+g_1H_1\cdot\nabla \bar H\\
&\quad +\bar gH_1\cdot\nabla H_2+g_2\bar H\cdot\nabla H_2-g_1\nabla \bar H\cdot H_1-\bar g\nabla H_2\cdot H_1-g_2\nabla H_2\cdot \bar H\Big)\cdot\bar{u}_tdx\\
&=\sum_{i=1}^{13}K_i,
\end{split}
\end{equation}
We will estimate each $K_i(i=1,...,13)$ as follows:
\begin{equation}
\begin{split}\nonumber
&K_1\leq C\|\bar h\|_{L^3}\big(\|(u_2)_t\|_{L^6}+\|(u_1)_t\|_{L^6}\big)\|(h_1+h_2) \bar u_t\|_{L^2}\\
&\quad\ \leq CF(t)\|\bar{h}\|_{H^1}^2
+\epsilon\|(h_1+h_2)\bar{u}_t\|_{L^2}^2,\\
&K_2\leq C\|h_1\|_{L^\infty}\|u_1\|_{L^\infty}\|\nabla \bar u\|_{L^2}\|h_1\bar u_t\|_{L^2}
\leq CF(t)\|\nabla\bar u\|_{L^2}^2+\epsilon\|(h_1+h_2)\bar{u}_t\|_{L^2}^2,\\
&K_4\leq C\|\bar h\|_{L^6}\|u_2\|_{L^6}\|\nabla u_2\|_{L^6}\|(h_1+h_2)\bar u_t\|_{L^2} \leq C\|\bar{h}\|_{H^1}^2+\epsilon\|(h_1+h_2)\bar{u}_t\|_{L^2}^2,\\
&K_5\leq C\|h_1\|_{L^\infty}\|\bar{u}\|_{L^6}\|\nabla u_2\|_{L^3}\|h_1\bar{u}_t\|_{L^2}\leq C\|\nabla\bar u\|_{L^2}+\epsilon\|h_1\bar{u}_t\|_{L^2}^2,\\
&K_7\leq C\|h_1^{-1}\psi_1\|_{L^{6}}\|\nabla\bar{u}\|_{L^{3}}\|h_1\bar{u}_t\|_{L^2}\leq C\|\nabla\bar u\|_{L^2}^2+\epsilon\|\nabla^2\bar u\|_{L^2}^2+\epsilon\|h_1\bar{u}_t\|_{L^2}^2,\\
&K_8+K_{11}\leq C\|J_1\|_{L^\infty}\|\nabla\bar H\|_{L^2}\|h_1\bar u_t\|_{L^2}\leq CF(t)\|\nabla\bar H\|_{L^2}^2+\epsilon\|h_1\bar{u}_t\|_{L^2}^2,\\
&K_{9}+K_{12}\leq C\|g_1^{-1}\|_{L^\infty}\|J_1\|_{L^\infty}\|\bar g\|_{L^2}\|\nabla H_2\|_{L^\infty}\|h_1\bar u_t\|_{L^2}\leq CF(t)\|\bar g\|_{L^2}^2+\epsilon\|h_1\bar{u}_t\|_{L^2}^2,\\
&K_{10}+K_{13}\leq C\|\bar H\|_{L^3}(\|\psi_2\|_{L^6}\|J_2\|_{L^\infty}+\|\nabla J_2\|_{L^6})\|h_2\bar u_t\|_{L^2}\leq CF(t)\|\bar H\|_{H^1}^2+\epsilon\|h_2\bar{u}_t\|_{L^2}^2,
\end{split}
\end{equation}
\begin{equation}
	\begin{split}\nonumber
		K_3=&2\frac{d}{dt}\int\bar\varphi\div\bar{u}dx-2\int \bar{\varphi}_t\div\bar{u}dx\\ \leq&-2\frac{d}{dt}\int\bar{h}\nabla\varphi_2\cdot\bar{u}dx+\|\nabla u_1\|_{L^3}\|\bar\varphi\|_{L^2}\|\nabla\bar{u}\|_{L^6}\\ &+\|u_1\|_{L^\infty}\|\bar\varphi\|_{L^2}\|\nabla^2\bar{u}\|_{L^2}+\|\bar\varphi\|_{L^2}\|\nabla u_2\|_{L^3}\|\nabla\bar{u}\|_{L^6}\\
		&+\|\varphi_1\|_{L^\infty}\|\nabla\bar{u}\|_{L^2}^2+\|\bar{u}\|_{L^6}\|\nabla\varphi_1\|_{L^3}\|\nabla\bar{u}\|_{L^2},
	\end{split}
\end{equation}
and
\begin{equation}
\begin{split}\nonumber
K_6=&2\frac{d}{dt}\int \bar\psi\cdot\mathcal{S}(u_2)\cdot\bar{u}dx-2\int\bar\psi_t\cdot\mathcal{S}(u_2)\cdot\bar{u}dx-2\int\bar\psi\cdot\mathcal{S}(u_2)_t\cdot\bar{u}dx\\
\leq&2\frac{d}{dt}\int \bar\psi\cdot\mathcal{S}(u_2)\cdot\bar{u}dx+\|\bar\psi\|_{L^2}\|\nabla u_1\|_{L^6}\|\nabla u_2\|_{L^6}\|\bar{u}\|_{L^6}\\
&+\|\bar\psi\|_{L^2}\|u_1\|_{L^\infty}\|\nabla^2 u_2\|_{L^3}\|\bar{u}\|_{L^6}+\|\bar\psi\|_{L^2}\| u_1\|_{L^\infty}\|\nabla u_2\|_{L^6}\|\nabla\bar{u}\|_{L^3}\\
&+\|\bar\psi\|_{L^2}\|\nabla u_1\|_{L^6}\|\nabla u_2\|_{L^6}\|\bar u\|_{L^6}
+\|\nabla \psi_2\|_{L^{2}}\|\bar{u}\|_{L^6}^2\|\nabla u_2\|_{L^{6}}\\
&+\|\psi_2\|_{L^{6}}\|\nabla\bar{u}\|_{L^2}\|\nabla u_2\|_{L^{6}}\|\bar{u}\|_{L^6}
+\|\nabla^2\bar{u}\|_{L^2}\|\nabla u_2\|_{L^3}\|\bar{u}\|_{L^6}\\
&+\|\bar\psi\|_{L^2}\|\nabla(u_2)_t\|_{L^2}\|\bar{u}\|_{L^\infty}\\
\leq&2\frac{d}{dt}\int \bar\psi\cdot\mathcal{S}(u_2)\cdot\bar{u}dx+CF(t)(\|\bar\psi\|_{L^2}^2+\|\nabla\bar u\|_{L^2}^2)+\epsilon\|\nabla^2 \bar u\|_{L^2}^2.
\end{split}
\end{equation}

Substituting $K_1$--$K_{13}$ into \eqref{wsw2q} and choosing $\epsilon$ small enough,  one gets
\begin{equation}
\begin{split}\label{wsa3}
&\frac{d}{dt}\left(\mu\|\mathrm{curl}\bar u\|_{L^2}^2+(2\mu+\lambda)\|\mathrm{div}\bar u\|_{L^2}^2+K(t)\right)+\|h_1\bar u_t\|_{L^2}^2+\|h_2\bar u_t\|_{L^2}^2\\ &\leq CF(t)\big(\|\bar{h}\|_{H^1}^2+\|\nabla\bar{u}\|_{L^2}^2+\|h_1\bar{u}\|_{L^2}^2+\|\bar\varphi\|_{H^1}^2+\|\bar\psi\|_{L^2}^2+\|\bar H\|_{H^1}^2\big),
\end{split}
\end{equation}
where
\begin{equation}
K(t)=\mu\int_{\partial\Omega} \bar u\cdot A\cdot\bar udS-2\int\bar\varphi\div\bar{u}dx-2\int\bar\psi\cdot\mathcal{S}(u_2)\cdot\bar{u}dx 
\end{equation}
satisfying
\begin{equation}
|K(t)|\leq \epsilon\|\nabla\bar{u}\|_{L^2}^2+C(\|h_1\bar u\|_{L^2}^2+\|\bar\varphi\|_{L^2}^2+\|\psi\|_{L^2}^2).
\end{equation}

\textbf{Step 2 (estimate of $\bar H$)}: Subtracting the magnetic field equations satisfied by $(u_1,H_1)$ and $(u_2,H_2)$ yields
\begin{equation}
\begin{split}\label{wsw21}
\bar{H}_{t}- \eta \Delta\bar{ H}=-u_{1}\cdot\nabla\bar{H}-\bar{u}\cdot\nabla H_2
+H_1\cdot\nabla \bar{u}+\bar{H}\cdot\nabla u_2-H_1\mbox{div}\bar{u}-\bar{H}\mbox{div}u_2.
\end{split}
\end{equation}
First, multiplying \eqref{wsw21} by $\bar{H}$ and integrating by parts lead  to
\begin{equation}\label{qaqa4u1}
\begin{split}
&\frac{1}{2}\frac{d}{dt}\|\bar H\|_{L^2}^2+\eta\|\curl \bar H\|^2\\
&\leq C\big(\|u_1\|_{L^\infty}\|\nabla \bar H\|_{L^2}+\|\bar u\|_{L^6}\|\nabla H_2\|_{L^3}+\| H_1\|_{L^\infty}\|\nabla\bar u\|_{L^2}+\|\bar H\|_{L^6}\|\nabla u_2\|_{L^3}\big)\|\bar H\|_{L^2}\\
&\leq \epsilon\|\nabla \bar u\|_{L^2}^2+\epsilon\|\nabla \bar H\|_{L^2}^2+F(t)\|\bar H\|_{L^2}^2.
\end{split}
\end{equation}
Next,   multiplying \eqref{wsw21} by $\bar H_t$ and integrating the resulting equality by parts, we obtain
\begin{equation}
\begin{split}\label{1wsw21}
&\frac{\eta}{2}\frac{d}{dt}\|\mathrm{curl}\bar H\|_{L^2}^2+\|\bar H_t\|_{L^2}^2\\
&\leq C\big(\|u_1\|_{L^\infty}\|\nabla \bar H\|_{L^2}+\|\bar u\|_{L^6}\|\nabla H_2\|_{L^3}+\| H_1\|_{L^\infty}\|\nabla\bar u\|_{L^2}+\|\bar H\|_{L^6}\|\nabla u_2\|_{L^3}\big)\|\bar H_t\|_{L^2}\\
&\leq \epsilon\|\bar H_t\|_{L^2}^2+CF(t)(\|\nabla \bar H\|_{L^2}^2+\|\nabla \bar u\|_{L^2}^2).
\end{split}
\end{equation}

\textbf{Step 3 (estimate of $\bar\rho$)}: Subtracting the mass equations satisfied by $(\rho_1,u_1)$ and $(\rho_2,u_2)$ yields
\begin{gather}
\bar{h}_t+u_2\cdot\nabla\bar{h}+\mathrm{div}u_2\bar{h}=-\bar{u}\cdot\nabla h_1-\mathrm{div}\bar{u}h_1,\label{2.98}\\
\bar{\varphi}_t+u_2\cdot\nabla\bar{\varphi}+\div u_2\bar\varphi=-\bar{u}\cdot\nabla\varphi_1-\div\bar u\varphi_1,\label{2.100}\\
\bar{\psi}_t+u_2\cdot\nabla\bar\psi+ \nabla u_2\cdot\bar{\psi}=-\bar{u}\cdot\nabla\psi_1-\nabla \bar{u}\cdot\psi_1-\nabla\mathrm{div}\bar{u}.\label{2.99}
\end{gather}

First, multiplying \eqref{2.98} by $\bar{h}$ and integrating by parts give that 
\begin{equation}
	\begin{split}\label{cx1}
		\frac{d}{dt}\|\bar{h}\|_{L^2}^2\leq&\|\mathrm{div}u_2\|_{L^\infty}\|\bar{h}\|_{L^2}^2 +\|h_1\bar{u}\|_{L^{3}}\|h_1^{-1}\nabla h_1\|_{L^{6}}\|\bar{h}\|_{L^2} +\|\nabla\bar{u}\|_{L^2}\|h_1\|_{L^\infty}\|\bar{h}\|_{L^2}\\
		\leq& \epsilon\|\nabla \bar{u}\|_{L^2}^2+CF(t)\Big(\|\bar{h}\|_{L^2}^2+\|h_1\bar{u}\|_{L^2}^2\Big).
	\end{split}
\end{equation}
Operating $\nabla$ to \eqref{2.98}, multiplying the resulting equation  by $\nabla\bar{h}$ and integrating by parts, it holds 
\begin{equation}
\begin{split}
\frac{d}{dt}\|\nabla\bar{h}\|_{L^2}^2\leq
&\|\nabla u_2\|_{L^\infty}\|\nabla\bar{h}\|_{L^2}^2+\|\nabla^2 u_2\|_{L^6}\|\bar{h}\|_{L^3}\|\nabla\bar{h}\|_{L^2}
\\
&+\|h_1\|_{L^\infty}^3\|\nabla^2(h_1)^{-1}\|_{L^{2}}\|\bar{u}\|_{L^{\infty}}\|\nabla\bar{h}\|_{L^2}\\
&+\|h_1\|_{L^\infty}^3\|\nabla(h_1)^{-1}\|_{L^{6}}^2\|\bar{u}\|_{L^{6}}\|\nabla\bar{h}\|_{L^2}
+\|\nabla^2\bar{u}\|_{L^2}\|h_1\|_{L^\infty}\|\nabla\bar{h}\|_{L^2}\\
&+\|\nabla\bar{u}\|_{L^{3}}\|\nabla h_1\|_{L^{6}}\|\nabla\bar{h}\|_{L^2}\\
\leq& \epsilon\|\nabla^2 \bar{u}\|_{L^2}^2+CF(t)(\|\bar{h}\|_{H^1}^2+\|\nabla \bar{u}\|_{L^2}^2).
\end{split}
\end{equation}

Similar to \eqref{cx1}, multiplying \eqref{2.100} by $\bar{\varphi}$ and integrating by parts, one has
\begin{equation}
	\begin{split}
		\frac{d}{dt}\|\bar{\varphi}\|_{L^2}^2
\leq& \epsilon\|\nabla \bar{u}\|_{L^2}^2+CF(t)\Big(\|\bar{\varphi}\|_{L^2}^2+\|h_1\bar{u}\|_{L^2}^2\Big),
	\end{split}
\end{equation}
and
\begin{equation}
\begin{split}
\frac{d}{dt}\|\nabla\bar{\varphi}\|_{L^2}^2\leq \epsilon\|\nabla^2 \bar{u}\|_{L^2}^2+CF(t)(\|\bar{\varphi}\|_{H^1}^2+\|\nabla \bar{u}\|_{L^2}^2).
\end{split}
\end{equation}

Then, multiplying \eqref{2.99} by $\bar\psi$ and integrating by parts lead  to
\begin{equation}
\begin{split}\label{cx2}
	\frac{d}{dt}\|\bar\psi\|_{L^2}^2\leq&\|\nabla u_2\|_{L^\infty} \|\bar\psi\|_{L^2}^2+\|\bar{u}\|_{L^\infty}\|h_1\|_{L^\infty}\|\nabla^2(h_1)^{-1}\|_{L^{2}}\|\bar\psi\|_{L^2}\\
	&+\|\bar{u}\|_{L^{6}}\|h_1\|_{L^\infty}^2\|\nabla(h_1)^{-1}\|_{L^{6}}^2\|\bar\psi\|_{L^2}\\ &+\|\nabla\bar{u}\|_{L^{3}}\|h_1\|_{L^\infty}\|\nabla(h_1)^{-1}\|_{L^{6}}\|\bar\psi\|_{L^2}+\|\nabla^2\bar{u}\|_{L^2}\|\bar\psi\|_{L^2}\\
	\leq& \epsilon\|\nabla^2 \bar{u}\|_{L^2}^2+CF(t)\Big(\|\bar\psi\|_{L^2}^2+\|\nabla \bar{u}\|_{L^2}^2\Big).
\end{split}
\end{equation}

Finally, define
\begin{equation}\label{ggt}
\begin{split} G(t)\triangleq&\|h_1\bar{u}\|_{L^2}^2+\|\bar H\|_{L^2}^2+\eta\|\mathrm{curl}\bar H\|_{L^2}^2+\|\bar{h}\|_{H^1}^2+\|\bar{\varphi}\|_{H^1}^2+\|\bar\psi\|_{L^2}^2+\|\bar g\|_{L^2}^2\\
&+\nu\left(\mu\|\mathrm{curl}\bar u\|_{L^2}^2+(2\mu+\lambda)\|\mathrm{div}\bar u\|_{L^2}^2+ K(t)\right),
\end{split}
\end{equation}
by virtue of  \eqref{wsw2}, \eqref{wsa3}, \eqref{qaqa4u1}, \eqref{1wsw21}, and \eqref{cx1}-\eqref{cx2}, one derives from \eqref{ggt} by  choosing $\epsilon$ and $\nu$ small enough that
\begin{equation}
\frac{d}{dt}G(t)\leq CF(t)G(t).
\end{equation}
This combined with Gr\"onwall's inequality and \eqref{e116} provides $G(t)\equiv 0$. 
The proof of Theorem 1.1 is completed.

\section*{Conflict-of-interest statement}
All authors declare that they have no conflicts of interest.

\section*{Acknowledgments}
This work is partially supported by the National Natural Science Foundation of China
(No. 12371219), and the Double-Thousand Plan of Jiangxi Province (No. jxsq2023201115). 


\begin{thebibliography}{10}
\bibitem{Aramaki2014Lp}
J. Aramaki, $L^p$ theory for the div-curl system, Int. J. Math. Anal., 8 (2014), 259-271.
\bibitem{bresch2004some}
D. Bresch, B. Desjardins, Some diffusive capillary models of Korteweg type, C. R. Mec., 332 (2004), 881-886.
\bibitem{Cai2023}
G.C. Cai, J. Li,
Existence and exponential growth of global classical solutions to the compressible Navier-Stokes equations with slip boundary conditions in 3D bounded domains, Indiana Univ. Math. J., 72 (2023), 2491-2546.
\bibitem{Cai2112}
G.C. Cai, J. Li, B.Q. L\"u, Global classical solutions to the compressible Navier-Stokes equations with slip boundary conditions in 3D exterior domains, arXiv:2112.05586.
\bibitem{Chen20241} Y.Z. Chen, B. Huang, Y. Peng, X.D. Shi,
Global strong solutions to the compressible magnetohydrodynamic equations with slip boundary conditions in 3D bounded domains, J. Differ. Equ., 365 (2023), 274-325.
\bibitem{Chen2024}
Y.Z. Chen, B. Huang, X.D. Shi, Global Strong Solutions to the Compressible Magnetohydrodynamic Equations With Slip Boundary Conditions in a 3D Exterior Domain, Commun. Math. Sci., 22 (2024), 685-720.

\bibitem{3}
Y. Cho, H.J. Choe, H. Kim, Unique solvability of the initial boundary value problems for compressible viscous fluids, J. Math. Pures Appl., 83 (2004), 243-275.
\bibitem{4}
Y. Cho, H. Kim, On classical solutions of the compressible Navier-Stokes equations with nonnegative initial densities, Manuscr. Math., 120 (2006), 91-129.
\bibitem{5}
H.J. Choe, H. Kim, Strong solutions of the Navier-Stokes equations for isentropic compressible fluids, J. Differ. Equ., 190 (2003), 504-523.
\bibitem{Fan2009}
J.S. Fan, W.H. Yu, Strong solution to the compressible magnetohydrodynamic equations with vacuum, Nonlinear Anal: Real World Appl., 10 (2009), 392-409.
\bibitem{Feireisl2001}
E. Feireisl, A. Novotn\'y, H. Petzeltov\'a, On the existence of global defined weak solutions to the Navier-Stokes equations, J. Math. Fluid Mech., 3 (2001), 358-392.
\bibitem{hgyi} G.Y. Hong, X.F. Hou, H.Y. Peng, C.J. Zhu, Global existence for a class of large solutions to three-dimensional compressible magnetohydrodynamic equations with vacuum, SIAM J. Math. Anal., 49(4) (2017), 2409–2441.
\bibitem{hhpz}
G.Y. Hong, X.F. Hou, H.Y. Peng, C.J. Zhu, Global existence for a class of large solution to compressible Navier-Stokes equations with vacuum. Math. Ann., 388 (2024) no. 2, 2163-2194.
\bibitem{huwang2010} X.P. Hu, D.H. Wang, Global existence and large-time behavior of solutions to the three-dimensional equations of compressible magnetohydrodynamic flows, Arch. Ration. Mech. Anal., 197 (2010), 203–238.
\bibitem{HuangLiXin2012}
X.D. Huang, J. Li, Z.P. Xin, Global classical and weak solutions to the three-dimensional full compressible Navier-Stokes system with vacuum and large oscillations, Commun. Pure Appl. Math., 65 (2012), 549-585.
\bibitem{jiang1} S. Jiang, P. Zhang, On spherically symmetric solutions of the compressible isentropic Navier-Stokes
equations. Commun. Math. Phys., 215 (2001), 559-581.

\bibitem{jiang2} S. Jiang, P. Zhang, Axisymmetric solutions of the 3D Navier-Stokes equations for compressible
isentropic fluids. J. Math. Pures Appl., 82 (2003), 949-973.
\bibitem{10}
O. Ladyzenskaja, N. Ural'ceva, Linear and Quasilinear Equations of Parabolic Type, American Mathematical Society, Providence, 1968.
\bibitem{lxz2013siam} H.L. Li, X.Y. Xu, J.W. Zhang, Global classical solutions to 3D compressible magnetohydrodynamic equations with large oscillations and vacuum, SIAM J. Math. Anal., 45 (2013), 1356–1387.


\bibitem{13}
J. Li, Z.L. Liang, On local classical solutions to the Cauchy problem of the two-dimensional barotropic compressible Navier-Stokes equations with vacuum, J. Math. Pures Appl., 102 (2014), 640-671.
\bibitem{Li201977}
J. Li, Z.P. Xin, Global well-posedness and large time asymptotic behavior of classical solutions to the compressible Navier-Stokes equations with vacuum, Annals of PDE., 5 (2019), 7.
\bibitem{9}
J. Li, Z.P. Xin, Global existence of weak solutions to the barotropic compressible Navier-Stokes flows with degenerate viscosities, arXiv:1504.06826.

\bibitem{Lili2025}
J.X. Li, L.X. Li,
On local strong solutions to the Cauchy problem of 3D isentropic compressible Navier-Stokes equations with degenerate viscosities and far field vacuum, J. Differ. Equ., 435 (2025), 113338.

\bibitem{lly26ns}
J.X. Li, B.Q. L\"u, B. Yuan, Local well-posedness of strong solutions to the  compressible Navier-Stokes equations with degenerate viscosities and far field vacuum in 3D exterior domains,  arXiv:2602.04597.


\bibitem{6}
Y.C. Li, R.H. Pan, S.G. Zhu, On classical solutions to 2D shallow water equations with degenerate viscosities, J. Math. Fluid Mech., 19 (2017), 151-190.
\bibitem{7}
Y.C. Li, R.H. Pan, S.G. Zhu, On classical solutions for viscous polytropic fluids with degenerate viscosities and vacuum, Arch. Ration. Mech. Anal., 234 (2019), 1281-1334.
\bibitem{Lions1998-2}
P.L. Lions, Mathematical Topics in Fluid Mechanics. Vol. 2. Compressible Models, Oxford University Press, 1998.

\bibitem{liu2025}
H.R. Liu, T. Luo, H. Zhong, Strong solutions to the 3-D compressible MHD equations with density-dependent viscosities in exterior domains with far-field vacuum, arXiv:2504.04376.

\bibitem{liu2025ns}
H.R. Liu, H. Zhong, 
Well-posedness of the 3-D compressible Navier-Stokes equations with density-dependent viscosities in exterior domains with far-field vacuum,
arXiv:2512.07614.

\bibitem{lyz2013jde} S. Liu, H. Yu, J. Zhang, Global weak solutions of 3D compressible MHD with discontinuous initial data and vacuum.
J. Differ. Equ., 254(1), (2013) 229–255.


\bibitem{Lv2015}
B.Q. L\"u, B. Huang, On strong solutions to the Cauchy problem of the two dimensional compressible magnetohydrodynamic equations with vacuum, Nonlinearity., 28 (2015), 509-530.
\bibitem{lsxiumj} B.Q. L\:u, X.D. Shi, X.Y. Xu, Global existence and large-time asymptotic behavior of strong solutions to the com-pressible magnetohydrodynamic equations with vacuum, Indiana Univ. Math. J., 65(3) (2016), 925–975.
\bibitem{matsumura1980initial}
A. Matsumura, T. Nishida, The initial value problem for the equations of motion of viscous and heat-conductive gases, J. Math. Kyoto Univ., 20 (1980), 67-104.
\bibitem{Nash1962}
J. Nash, Le probl\'eme de Cauchy pour les \'equations diff\'erentielles d'un fluide g\'en\'eral, Bull. Soc. Math. Fr., 90 (1962), 487-497.
\bibitem{Navier1827}
Navier, C. L. M. H. Sur les lois de l'\'equilibre et du mouvement des corps\'elastiques, Mem. Acad. R. Sci. Inst. France., 6 (1827), 369.
\bibitem{Straskraba1993}
R. Salvi, I. Straskraba, Global existence for viscous compressible fluids and their behavior as $t \rightarrow \infty$, J. Fac. Sci., Univ. Tokyo, Sect. 1A, Math., 40 (1993), 17-51.
\bibitem{Serrin1959}
J. Serrin, On the uniqueness of compressible fluid motion, Arch. Ration. Mech. Anal., 3 (1959), 271-288.
\bibitem{Tani19774}
A. Tani, On the first initial-boundary value problem of compressible viscous fluid motion, Publ. Res. Inst. Math. Sci. Kyoto Univ., 13 (1977), 193-253.
\bibitem{Vasseur2016}
C. Vasseur, C. Yu, Existence of global weak solutions for 3d degenerate compressible Navier- Stokes equations, Invent. Math., (2016) 1-40.
\bibitem{wangxuejde} X. Wang, X.J. Xu, Global existence of strong solutions to the compressible magnetohydrodynamic equations with large initial data and vacuum in $\mathbb{R}^2$, J. Differ. Equ., 415 (2025), 722-763.
\bibitem{MR4340224}
Z.P. Xin, S.G. Zhu, Global well-posedness of regular solutions to the three-dimensional isentropic compressible Navier-Stokes equations with degenerate viscosities and vacuum, Adv. Math., 393 (2021), 108072.
\bibitem{zhuxin2021}
Z.P. Xin, S.G. Zhu, Well-posedness of three-dimensional isentropic compressible Navier-Stokes equations with degenerate viscosities and far field vacuum, J. Math. Pures Appl., 152 (2021), 94-144.




















\end{thebibliography}
\end{document}